\newtheorem{theorem}{Theorem}[section]
\theoremstyle{plain}
\newtheorem{corollary}[theorem]{Corollary}
\newtheorem{definition}{Definition}
\newtheorem{lemma}[theorem]{Lemma}
\newtheorem{proposition}[theorem]{Proposition}
\numberwithin{equation}{section}
\begin{document}
\title[Solutions to an elliptic equation concentrating at a submanifold]{Solutions to a singularly perturbed supercritical elliptic equation on a
Riemannian manifold concentrating at a submanifold}
\author{M\'{o}nica Clapp}
\address{Instituto de Matem\'{a}ticas, Universidad Nacional Aut\'{o}noma de M\'{e}xico,
Circuito Exterior CU, 04510 M\'{e}xico DF, Mexico}
\email{monica.clapp@im.unam.mx}
\author{Marco Ghimenti}
\address{Dipartimento di Matematica Applicata, Universit\`{a} di Pisa, Via Buonarroti
1/c 56127, Pisa, Italy}
\email{marco.ghimenti@dma.unipi.it}
\author{Anna Maria Micheletti}
\address{Dipartimento di Matematica Applicata, Universit\`{a} di Pisa, Via Buonarroti
1/c 56127, Pisa, Italy}
\email{a.micheletti@dma.unipi.it}
\thanks{Research supported by CONACYT grant 129847 and UNAM-DGAPA-PAPIIT grant
IN106612 (Mexico), and MIUR projects PRIN2009: 
``Variational and Topological Methods in the Study of Nonlinear Phenomena" and 
``Critical Point Theory and Perturbative Methods for Nonlinear Differential Equations". (Italy).}
\subjclass[2010]{35J60, 35J20, 35B40,58E30}
\date{\today}
\keywords{Supercritical elliptic equation, Riemannian manifold, warped product, harmonic
morphism, Lyapunov-Schmidt reduction}
\maketitle

\begin{abstract}
Given a smooth Riemannian manifold $(\mathcal{M},g)$ we investigate the
existence of positive solutions to the equation
\[
-\varepsilon^{2}\Delta_{g}u+u=u^{p-1}\ \text{ on }\mathcal{M}%
\]
which concentrate at some submanifold of $\mathcal{M}$ as $\varepsilon
\rightarrow0,$ for supercritical nonlinearities. We obtain a posive answer for
some manifolds, which include warped products.

Using one of the projections of the warped product or some harmonic morphism,
we reduce this problem to a problem of the form
\[
-\varepsilon^{2}\text{div}_{h}\left(  c(x)\nabla_{h}u\right)
+a(x)u=b(x)u^{p-1},
\]
with the same exponent $p,$ on a Riemannian manifold $(M,h)$ of smaller
dimension, so that $p$ turns out to be subcritical for this last problem.
Then, applying Lyapunov-Schmidt reduction, we establish existence of a
solution to the last problem which concentrates at a point as $\varepsilon
\rightarrow0.$

\end{abstract}

\section{Introduction}

Let $(\mathfrak{M},\mathfrak{g})$ be a compact smooth Riemannian manifold,
without boundary, of dimension $m\geq2$. We consider the problem
\[
(\wp_{\varepsilon})\qquad\left\{
\begin{array}
[c]{l}%
-\varepsilon^{2}\Delta_{\mathfrak{g}}v+v=v^{p-1},\\
v\in H_{\mathfrak{g}}^{1}(\mathfrak{M}),\text{\quad}v>0.
\end{array}
\right.
\]
where $p>2$ and $\varepsilon^{2}$ is a singular perturbation parameter. The
space $H_{\mathfrak{g}}^{1}(\mathfrak{M})$ is the completion of $\mathcal{C}%
^{\infty}(\mathfrak{M})$ with respect to the norm defined by $\Vert
v\Vert_{\mathfrak{g}}^{2}:=\int_{\mathfrak{M}}(\left\vert \nabla
_{\mathfrak{g}}v\right\vert ^{2}+v^{2})d\mu_{\mathfrak{g}}$.

Let $2_{m}^{\ast}:=\infty$ if $m=2$ and $2_{m}^{\ast}:=\frac{2m}{m-2}$ if
$m\geq3$ be the critical Sobolev exponent in dimension $m.$ In the subcritical
case, where $p<2_{m}^{\ast},$ solutions to $(\wp_{\varepsilon})$ which
concentrate at a point are known to exist. In \cite{bp} Byeon and Park showed
that there are solutions with one peak concentrating at a maximum point of the
scalar curvature of $(\mathfrak{M},\mathfrak{g})$ as $\varepsilon\rightarrow
0$. Single-peak solutions concentrating at a stable critical point of the
scalar curvature of $(\mathfrak{M},\mathfrak{g})$ as $\varepsilon\rightarrow0$
were obtained by Micheletti and Pistoia in \cite{mp}, whereas in \cite{dmp}
Dancer, Micheletti and Pistoia proved the existence solutions with $k$ peaks
which concentrate at an isolated minimum of the scalar curvature of
$(\mathfrak{M},\mathfrak{g})$ as $\varepsilon\rightarrow0.$ Some results on
sign changing solutions, as well as multiplicity results, are also available,
see \cite{mp2, bbm, cy, v, h, gm}.

We are specially interested in the critical and the supercritical case, where
$p\geq2_{m}^{\ast},$ and in solutions exhibiting concentration at positive
dimensional submanifolds of $\mathfrak{M}$ as $\varepsilon\rightarrow0$.

For the analogous equation%
\begin{equation}
-\varepsilon^{2}\Delta v+V(x)v=v^{p-1}\quad\text{in }\Omega,\label{omega}%
\end{equation}
in a bounded smooth domain $\Omega$ in $\mathbb{R}^{n}$ with Dirichlet or
Neumann boundary conditions, or in the entire space $\mathbb{R}^{n}$, there is
a vast literature concerning solutions concentrating at one or at a finite set
of points. Various results concerning concentration at an $(n-1)$-dimensional
sphere are nowadays also available, see e.g. \cite{am, amn1, amn2, bd, be-d,
mm}\ and the references therein.

A fruitful approach to produce solutions to equation (\ref{omega}) which
concentrate at other positive dimensional manifolds, for $p$ up to some
supercritical exponent, is to reduce it to an equation in a domain of lower
dimension. This approach was introduced by Ruf and Srikanth in \cite{rs},
where the reduction is given by a Hopf map. Reductions may also be performed
by means of other maps which preserve the laplacian, or by considering
rotational symmetries, or by a combination of both, as has been recently done
in \cite{acp, cfp, cfp2, kp, kp2, pp, ps, wy} for different problems.

Next, we describe some of these reductions in the context of Riemannian
manifolds. For simplicity, from now on we consider all manifolds to be smooth,
compact and without boundary.

\subsection{Harmonic morphisms}

\label{subsec:hm}Let $(\mathfrak{M},\mathfrak{g})$ and $(M,g)$ be Riemannian
manifolds of dimensions $m$ and $n$ respectively. A \emph{harmonic morphism}
is a horizontally conformal submersion $\pi:\mathfrak{M}\rightarrow M$ with
dilation $\lambda:\mathfrak{M}\rightarrow\lbrack0,\infty)$ which satisfies the
equation%
\begin{equation}
(n-2)\mathcal{H}(\nabla_{\mathfrak{g}}\ln\lambda)+(m-n)\kappa^{\mathcal{V}}=0,
\label{eq:hm}%
\end{equation}
where $\kappa^{\mathcal{V}}$ is the mean curvature of the fibers and
$\mathcal{H}$ is the projection of the tangent space of $\mathfrak{M}$ onto
the space orthogonal to the fibers of $\pi$, see \cite{bw, er, f, w}. Typical
examples are the Hopf fibrations
\[
\mathbb{S}^{m}\rightarrow\mathbb{R}P^{m},\text{\quad}\mathbb{S}^{2m+1}%
\rightarrow\mathbb{C}P^{m},\text{\quad}\mathbb{S}^{4m+3}\rightarrow
\mathbb{H}P^{m},\text{\quad}\mathbb{S}^{15}\rightarrow\mathbb{S}^{8},
\]
which are Riemannian submersions (i.e. $\lambda\equiv1$) with totally geodesic
fibers $\mathbb{S}^{0}$, $\mathbb{S}^{1},$ $\mathbb{S}^{3}$and $\mathbb{S}%
^{7}$ respectively, see \cite[Examples 2.4.14-2.4.17]{bw}. So they trivially
satisfy (\ref{eq:hm}).

The main property of a harmonic morphism is that it preserves the
Laplace-Beltrami operator, i.e. it satisfies%
\[
\Delta_{\mathfrak{g}}(u\circ\pi)=\lambda^{2}\left[  (\Delta_{g}u)\circ
\pi\right]
\]
for every $\mathcal{C}^{2}$-function $u:M\rightarrow\mathbb{R}.$ The following
proposition is an immediate consequence of this fact.

\begin{proposition}
\label{prop:hm}Let $\pi:\mathfrak{M}\rightarrow M$ be a harmonic morphism with
dilation $\lambda$. Assume there exists $\mu:M\rightarrow(0,\infty)$ such that
$\mu\circ\pi=\lambda^{2}.$ Then $u:M\rightarrow\mathbb{R}$ solves%
\begin{equation}
-\varepsilon^{2}\Delta_{g}u+\frac{1}{\mu(x)}u=\frac{1}{\mu(x)}u^{p-1}%
\text{\qquad on }M \label{eq:M}%
\end{equation}
iff $v:=u\circ\pi:\mathfrak{M}\rightarrow\mathbb{R}$ solves
\begin{equation}
-\varepsilon^{2}\Delta_{\mathfrak{g}}v+v=v^{p-1}\text{\qquad on }\mathfrak{M}.
\label{eq:Mfrac}%
\end{equation}

\end{proposition}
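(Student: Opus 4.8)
The plan is to derive both implications from the single identity recalled just above the statement, namely $\Delta_{\mathfrak{g}}(u\circ\pi)=\lambda^{2}\big[(\Delta_{g}u)\circ\pi\big]$, valid for every $u\in\mathcal{C}^{2}(M)$ because $\pi$ is a harmonic morphism. We read $(\wp_{\varepsilon})$-type equations in the classical sense, noting that by standard elliptic regularity any (weak) solution is $\mathcal{C}^{2}$, so the identity applies; and that $u\in H^{1}_{g}(M)$ holds if and only if $v=u\circ\pi\in H^{1}_{\mathfrak{g}}(\mathfrak{M})$, which follows from the coarea formula together with the positive two-sided bounds on $\lambda^{2}=\mu\circ\pi$ on the compact manifold $\mathfrak{M}$ and the compactness of the fibers.

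First I would substitute $v=u\circ\pi$ into the left-hand side of \eqref{eq:Mfrac}, use the Laplacian identity and the hypothesis $\lambda^{2}=\mu\circ\pi$, and factor out $\mu\circ\pi$ (which is legitimate since $(\mu\circ\pi)\cdot\big((1/\mu)\circ\pi\big)=1$):
\[
-\varepsilon^{2}\Delta_{\mathfrak{g}}v+v-v^{p-1}
=-\varepsilon^{2}(\mu\circ\pi)\big[(\Delta_{g}u)\circ\pi\big]+u\circ\pi-(u\circ\pi)^{p-1}
=(\mu\circ\pi)\left[\Big(-\varepsilon^{2}\Delta_{g}u+\tfrac{1}{\mu}u-\tfrac{1}{\mu}u^{p-1}\Big)\circ\pi\right].
\]
Writing $F:=-\varepsilon^{2}\Delta_{g}u+\tfrac{1}{\mu}u-\tfrac{1}{\mu}u^{p-1}$, this reads $-\varepsilon^{2}\Delta_{\mathfrak{g}}v+v-v^{p-1}=(\mu\circ\pi)\,(F\circ\pi)$ on $\mathfrak{M}$. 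From here both directions are immediate. If $u$ solves \eqref{eq:M} then $F\equiv0$ on $M$, so the displayed formula gives $-\varepsilon^{2}\Delta_{\mathfrak{g}}v+v-v^{p-1}\equiv0$, i.e. $v$ solves \eqref{eq:Mfrac}, and positivity of $v=u\circ\pi$ is inherited from that of $u$. Conversely, if $v=u\circ\pi$ solves \eqref{eq:Mfrac}, then $(\mu\circ\pi)(F\circ\pi)\equiv0$; since $\mu>0$ on $M$ forces $\mu\circ\pi>0$ on $\mathfrak{M}$, we get $F\circ\pi\equiv0$, and since $\pi$, being a submersion from a compact manifold, is an open map with closed (hence full) image in the connected manifold $M$, it is surjective, so $F\equiv0$ on $M$, i.e. $u$ solves \eqref{eq:M}; again $u>0$ follows from $v>0$ and surjectivity of $\pi$.

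Since the algebra is forced by the Laplacian identity, there is no serious analytic obstacle here; the only points requiring a little care — and the closest thing to a sticking point — are the two bookkeeping matters flagged above: justifying that the $\mathcal{C}^{2}$-identity legitimately applies to the solutions at hand (a regularity bootstrap), and verifying the $H^{1}$-membership transfers under composition with $\pi$, so that $v=u\circ\pi$ genuinely lies in $H^{1}_{\mathfrak{g}}(\mathfrak{M})$ whenever $u\in H^{1}_{g}(M)$ and conversely. With these in hand the proposition is exactly the "immediate consequence" the text announces.
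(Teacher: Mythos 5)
Your argument is correct and follows exactly the route the paper intends: the proposition is stated there as an immediate consequence of the identity $\Delta_{\mathfrak{g}}(u\circ\pi)=\lambda^{2}\left[(\Delta_{g}u)\circ\pi\right]$ together with the hypothesis $\lambda^{2}=\mu\circ\pi$, and your computation simply spells that out, with the sensible extra remarks on positivity of $\mu\circ\pi$ and surjectivity of $\pi$ needed for the converse direction. No discrepancy with the paper's (unwritten) proof.
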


Note that the exponent $p$ is the same in both equations. If $n<m$ then
$2_{m}^{\ast}<2_{n}^{\ast},$ so $p\in\lbrack2_{m}^{\ast},2_{n}^{\ast})$ is
subcritical for problem (\ref{eq:M}) but it is critical or supercritical for
problem (\ref{eq:Mfrac}). Moreover, if solutions $u_{\varepsilon}$ of
(\ref{eq:M})\ concetrate at some point $x_{0}\in M$ the corresponding
solutions $v_{\varepsilon}:=u_{\varepsilon}\circ\pi$ to problem
(\ref{eq:Mfrac})\ concentrate at $\pi^{-1}(x_{0}),$ which is a manifold of
dimension $m-n.$

\subsection{Warped products}

\label{sec:wp}A different type of reduction can be performed on warped
products. If $(M,g)$ and $(N,h)$ are Riemannian manifolds of dimensions $n$
and $k$ respectively, and $f:M\rightarrow(0,\infty)$ is a smooth function, the
\emph{warped product} $M\times_{f^{2}}N$ is the cartesian product $M\times N$
equipped with the Riemannian metric $g+f^{2}h.$

The projection $\pi_{N}:M\times_{f^{2}}N\rightarrow N$ is a harmonic morphism
with dilation $\lambda(y,z)=\frac{1}{f(y)}$ \cite[Example 2.4.26]{bw} but
there is no function $\mu:M\rightarrow(0,\infty)$ such that $\mu\circ
\pi=\lambda^{2}$ unless $f$ is constant. So we cannot apply Proposition
\ref{prop:hm}. Instead, we consider the projection $\pi_{M}:M\times_{f^{2}%
}N\rightarrow M.$ A straightforward computation gives the following result,
cf. \cite{dl}.

\begin{proposition}
\label{prop:wp}$u:M\rightarrow\mathbb{R}$ solves%
\begin{equation}
-\varepsilon^{2}\text{\emph{div}}_{g}\left(  f^{k}(x)\nabla_{g}u\right)
+f^{k}(x)u=f^{k}(x)u^{p-1}\text{\qquad on }M \label{eq:div}%
\end{equation}
iff $v:=u\circ\pi_{M}:M\times_{f^{2}}N\rightarrow\mathbb{R}$ solves
\begin{equation}
-\varepsilon^{2}\Delta_{\mathfrak{g}}v+v=v^{p-1}\text{\qquad on }%
M\times_{f^{2}}N. \label{eq:wp}%
\end{equation}

\end{proposition}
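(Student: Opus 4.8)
The plan is to reduce everything to the classical expression for the Laplace--Beltrami operator of a warped product, applied to functions that are constant along the fibers of $\pi_{M}$. Write $\mathfrak{g}=g+f^{2}h$ for the metric on $M\times_{f^{2}}N$, and choose local coordinates $(y^{\alpha})_{\alpha=1}^{n}$ on $M$ and $(z^{a})_{a=1}^{k}$ on $N$. Then $\mathfrak{g}$ is block diagonal, $\mathfrak{g}=\operatorname{diag}(g_{\alpha\beta},f^{2}h_{ab})$, its inverse is $\operatorname{diag}(g^{\alpha\beta},f^{-2}h^{ab})$, and $\det\mathfrak{g}=f^{2k}\det g\,\det h$, so $\sqrt{\det\mathfrak{g}}=f^{k}\sqrt{\det g}\,\sqrt{\det h}$.

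Next I would substitute this into $\Delta_{\mathfrak{g}}\phi=\frac{1}{\sqrt{\det\mathfrak{g}}}\,\partial_{i}\bigl(\sqrt{\det\mathfrak{g}}\,\mathfrak{g}^{ij}\partial_{j}\phi\bigr)$. Using that $f$ and $\det g$ depend only on $y$ while $\det h$ depends only on $z$, the two blocks separate and one obtains, for any $\phi\in\mathcal{C}^{2}(M\times_{f^{2}}N)$,
\[
\Delta_{\mathfrak{g}}\phi=\Delta_{g}\phi+\frac{1}{f^{2}}\Delta_{h}\phi+\frac{k}{f}\langle\nabla_{g}f,\nabla_{g}\phi\rangle_{g},
\]
where on the right $\Delta_{g},\nabla_{g}$ differentiate in the $y$-variables and $\Delta_{h}$ in the $z$-variables. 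Applying this to $v=u\circ\pi_{M}$, which does not depend on $z$, kills the middle term and gives
\[
\Delta_{\mathfrak{g}}v=\Delta_{g}u+\frac{k}{f}\langle\nabla_{g}f,\nabla_{g}u\rangle_{g}=\frac{1}{f^{k}}\,\operatorname{div}_{g}\bigl(f^{k}\nabla_{g}u\bigr),
\]
the last equality being the product rule $\operatorname{div}_{g}(f^{k}\nabla_{g}u)=f^{k}\Delta_{g}u+k f^{k-1}\langle\nabla_{g}f,\nabla_{g}u\rangle_{g}$.

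With this identity the equivalence is immediate: multiplying \eqref{eq:wp} by the positive function $f^{k}$ (of $y$ alone) turns $-\varepsilon^{2}\Delta_{\mathfrak{g}}v+v=v^{p-1}$ into $-\varepsilon^{2}\operatorname{div}_{g}(f^{k}\nabla_{g}u)+f^{k}u=f^{k}u^{p-1}$, which is \eqref{eq:div}, and conversely dividing \eqref{eq:div} by $f^{k}$ recovers \eqref{eq:wp}. The only point requiring a word of care is that the statement should be read at the level of the weak formulations in the respective (weighted) Sobolev spaces: since $d\mu_{\mathfrak{g}}=f^{k}\,d\mu_{g}\,d\mu_{h}$ and $|\nabla_{\mathfrak{g}}v|_{\mathfrak{g}}=|\nabla_{g}u|_{g}$ for $v=u\circ\pi_{M}$, one has $\Vert v\Vert_{\mathfrak{g}}^{2}=\operatorname{vol}_{h}(N)\int_{M}(|\nabla_{g}u|_{g}^{2}+u^{2})f^{k}\,d\mu_{g}$, so pulling back along $\pi_{M}$ is an isometry onto the natural weighted $H^{1}$-space on $M$; testing \eqref{eq:wp} against functions of the form $\varphi\circ\pi_{M}$ yields the weak form of \eqref{eq:div}, while testing \eqref{eq:div} against the fiber average of an arbitrary test function on $M\times_{f^{2}}N$ yields the converse. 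I expect no genuine obstacle here: the entire content is the warped-product Laplacian computation, which is standard (cf.\ \cite{dl}); one only has to keep the sign and weight conventions consistent throughout.
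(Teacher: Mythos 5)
Your computation is correct and is precisely the ``straightforward computation'' the paper alludes to (cf.\ \cite{dl}): the block-diagonal form of $\mathfrak{g}=g+f^{2}h$ gives $\Delta_{\mathfrak{g}}v=f^{-k}\,\mathrm{div}_{g}(f^{k}\nabla_{g}u)$ for fiber-constant $v=u\circ\pi_{M}$, and multiplying by $f^{k}$ yields the stated equivalence. No issues.
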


The exponent $p$ is again the same in both equations. So, if $k>0,$ then
$p\in\lbrack2_{n+k}^{\ast},2_{n}^{\ast})$ is subcritical for problem
(\ref{eq:div}) but it is critical or supercritical for problem (\ref{eq:wp}).
Moreover, if solutions $u_{\varepsilon}$ of (\ref{eq:div})\ concetrate at some
point $x_{0}\in M,$ then the solutions $v_{\varepsilon}:=u_{\varepsilon}%
\circ\pi_{M}$ to problem (\ref{eq:wp})\ concentrate at $\pi_{M}^{-1}%
(x_{0})\cong(N,f^{2}(x_{0})h).$

\subsection{The reduced problem}

Propositions \ref{prop:hm}\ and \ref{prop:wp}\ lead to study the problem%
\begin{equation}
\left\{
\begin{array}
[c]{l}%
-\varepsilon^{2}\text{div}_{g}\left(  c(x)\nabla_{g}u\right)
+a(x)u=b(x)u^{p-1}\\
u\in H_{g}^{1}(M),\text{\quad}u>0.
\end{array}
\right.  \label{eq:main}%
\end{equation}
where $(M,g)$ is an $n$-dimensional compact smooth Riemannian manifold without
boundary, $n\geq2$, $p\in(2,2_{n}^{\ast})$, $a,b,c$ are positive real-valued
$\mathcal{C}^{2}$-functions on $M$, and $\varepsilon$ is a positive parameter.

In order to state our main result we need the following definition.

\begin{definition}
\label{def:stable}Let $\phi\in\mathcal{C}^{1}(M,\mathbb{R})$. A set
$K\subset\left\{  x\in M:\nabla_{g}\phi(x)=0\right\}  $ is a $\mathcal{C}^{1}%
$\emph{-stable critical set for} $\phi$ if for each $\mu>0$ there exists
$\delta>0$ such that, if $\psi\in\mathcal{C}^{1}(M,\mathbb{R})$ with
\[
\max_{d_{g}(x,K)\leq\mu}\left\vert \phi(x)-\psi(x)\right\vert +\left\vert
\nabla_{g}\phi(x)-\nabla_{g}\psi(x)\right\vert \leq\delta,
\]
then $\psi$ has a critical point $x_{0}$ with $d_{g}(x_{0},K)\leq\mu$. Here
$d_{g}$ denotes the geodesic distance associated to the Riemannian metric $g$.
\end{definition}

We shall prove the following result.

\begin{theorem}
\label{thm:main}Let $K$ be a $\mathcal{C}^{1}$-stable critical set for the
function
\[
\Gamma(x):=\frac{c(x)^{\frac{n}{2}}a(x)^{\frac{p}{p-2}-\frac{n}{2}}%
}{b(x)^{\frac{2}{p-2}}}.
\]
Then, for every $\varepsilon$ small enough, problem \emph{(\ref{eq:main})} has
a solution $u_{\varepsilon}$ which concentrates at a point $x_{0}\in K$ as
$\varepsilon\rightarrow0$.
\end{theorem}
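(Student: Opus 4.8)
The plan is to use a Lyapunov--Schmidt reduction around a single bubble, adapted to the variable coefficients $a,b,c$. First I would rescale: after setting $u(x)=\mu(\varepsilon)^{\alpha} w(x)$ or more conveniently working with the natural energy functional
\[
J_\varepsilon(u)=\frac{1}{2}\int_M\left(\varepsilon^2 c(x)|\nabla_g u|^2+a(x)u^2\right)d\mu_g-\frac{1}{p}\int_M b(x)|u|^p\,d\mu_g,
\]
I would look for critical points near an approximate solution built from the standard positive radial solution $U$ of $-\Delta U+U=U^{p-1}$ on $\mathbb{R}^n$. The key observation is that near a point $\xi\in M$, in geodesic normal coordinates, the leading-order limiting equation for a bubble centered at $\xi$ and scaled by $\varepsilon$ is $-c(\xi)\Delta U+a(\xi)U=b(\xi)U^{p-1}$; solving this by an affine change of the spatial and amplitude variables shows the energy of the optimally-fitted bubble at $\xi$ is, to leading order, $\varepsilon^n\,\Gamma(\xi)\cdot I_0$ for a universal constant $I_0>0$, where $\Gamma$ is exactly the function in the statement (the exponents $\tfrac n2$, $\tfrac{p}{p-2}-\tfrac n2$, $\tfrac{2}{p-2}$ come from tracking how the scaling and amplitude factors enter the gradient term, the mass term and the nonlinear term respectively).

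The technical core is the finite-dimensional reduction. I would define, for $\xi\in M$ and suitable scaling parameter comparable to $\varepsilon$, an approximate solution $W_{\varepsilon,\xi}$ by cutting off and transplanting the rescaled bubble via the exponential map, and let $K_{\varepsilon,\xi}$ and $K_{\varepsilon,\xi}^\perp$ be the tangent space to the manifold of approximate solutions and its orthogonal complement in $H^1_g(M)$ with respect to the $\varepsilon$-dependent inner product. One shows: (i) the linearized operator $J_\varepsilon''(W_{\varepsilon,\xi})$ restricted to $K_{\varepsilon,\xi}^\perp$ is invertible with inverse bounded uniformly in $\varepsilon$ and $\xi$ (this rests on the nondegeneracy of $U$ up to translations, a standard fact, plus a compactness/contradiction argument to handle the variable coefficients and the manifold geometry); (ii) the error $J_\varepsilon'(W_{\varepsilon,\xi})$ is small in the appropriate norm, of order $o(\varepsilon^{n/2})$ or better after optimizing the scaling parameter. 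A contraction mapping argument then produces, for each $\xi$ near $K$, a unique small $\phi_{\varepsilon,\xi}\in K_{\varepsilon,\xi}^\perp$ such that $J_\varepsilon'(W_{\varepsilon,\xi}+\phi_{\varepsilon,\xi})\in K_{\varepsilon,\xi}$, depending $\mathcal{C}^1$ on $\xi$.

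Next I would pass to the reduced functional $\widetilde J_\varepsilon(\xi):=J_\varepsilon(W_{\varepsilon,\xi}+\phi_{\varepsilon,\xi})$ on $M$ (or on the finite-dimensional configuration space including the scaling parameter, which I would first eliminate by the same reduction since the energy is strictly concave/convex in it to leading order). The standard variational principle of Lyapunov--Schmidt guarantees that a critical point of $\widetilde J_\varepsilon$ yields a genuine critical point of $J_\varepsilon$, hence a solution of (\ref{eq:main}); positivity follows since $W_{\varepsilon,\xi}>0$ and $\phi_{\varepsilon,\xi}$ is small, after replacing $u$ by $|u|$ if needed using that $b>0$. The expansion from the energy computation gives
\[
\widetilde J_\varepsilon(\xi)=\varepsilon^n\left(c_0+c_1\,\Gamma(\xi)+o(1)\right)\quad\text{as }\varepsilon\to0,
\]
$\mathcal{C}^1$-uniformly on a neighborhood of $K$, with $c_1\neq0$. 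Since $K$ is a $\mathcal{C}^1$-stable critical set for $\Gamma$, Definition \ref{def:stable} applies to $\psi=\varepsilon^{-n}c_1^{-1}(\widetilde J_\varepsilon-\varepsilon^n c_0)$ and produces, for $\varepsilon$ small, a critical point $\xi_\varepsilon$ of $\widetilde J_\varepsilon$ with $d_g(\xi_\varepsilon,K)\to0$; the associated solution $u_\varepsilon=W_{\varepsilon,\xi_\varepsilon}+\phi_{\varepsilon,\xi_\varepsilon}$ then concentrates at a point of $K$.

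The main obstacle I anticipate is step (i), the uniform invertibility of the linearized operator on the orthogonal complement: one must show that the only possible loss of coercivity is along the directions already quotiented out (translations/dilations of the bubble), and this requires carefully controlling the interaction between the cutoff, the curvature of $M$ entering through $\Delta_g$ in normal coordinates, and the first-order variation of the coefficients $a,b,c$ — a blow-up argument reducing to the known nondegeneracy of $U$ on $\mathbb{R}^n$ should close it, but the bookkeeping of $\varepsilon$-weighted norms is where the real work lies. A secondary, more routine obstacle is producing the energy expansion with a $\mathcal{C}^1$ (not merely $\mathcal{C}^0$) estimate of the remainder in $\xi$, which is needed to invoke $\mathcal{C}^1$-stability; this follows by differentiating the reduction identities and reusing the uniform bounds from step (i).
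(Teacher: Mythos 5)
Your proposal is correct and follows essentially the same route as the paper: a Lyapunov--Schmidt reduction around the bubble $V^{\xi}$ solving the frozen-coefficient limit equation, uniform invertibility of the linearized operator on the orthogonal complement of the translation modes, a contraction mapping giving $\phi_{\varepsilon,\xi}$ of size $O(\varepsilon)$, and a $\mathcal{C}^{1}$-uniform expansion of the reduced functional with leading term a positive constant times $\Gamma(\xi)$, combined with $\mathcal{C}^{1}$-stability of $K$. The only superfluous element is your extra scaling/dilation parameter: since $p$ is subcritical the concentration scale is forced to be $\varepsilon$ and the kernel of the linearization of (\ref{eq:U}) consists of translations only, so no dilation mode needs to be quotiented out or optimized, exactly as in the paper.
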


This result, together with Propositions \ref{prop:hm}\ and \ref{prop:wp},
implies the existence of solutions to problem $(\wp_{\varepsilon})$ which
concentrate at a submanifold for subcritical, critical and supercritical
exponents. More precisely, the following results hold true.

\begin{corollary}
\label{cor:hm}Assume there exists a harmonic morphism $\pi:\mathfrak{M}%
\rightarrow M$ whose dilation $\lambda:\mathfrak{M}\rightarrow(0,\infty)$ is
of the form $\lambda^{2}=\mu\circ\pi.$ Let $n:=\dim M\geq2$ and let $K$ be a
$\mathcal{C}^{1}$-stable critical set for the function $\mu^{\frac{n-2}{2}%
}:M\rightarrow(0,\infty).$ Then, for every $p\in(2,2_{n}^{\ast})$ and
$\varepsilon$ small enough, problem \emph{(\ref{eq:main})} has a solution
$v_{\varepsilon}$ which concentrates at the fiber $\pi^{-1}(x_{0})$ over some
point $x_{0}\in K,$ as $\varepsilon\rightarrow0.$
\end{corollary}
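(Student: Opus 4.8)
The plan is to obtain this corollary from Theorem \ref{thm:main} together with Proposition \ref{prop:hm}, the only real content being an algebraic simplification. First I would note that the equation (\ref{eq:M}) produced by Proposition \ref{prop:hm} is exactly the special case of the reduced problem (\ref{eq:main}) with $c\equiv 1$ and $a=b=1/\mu$: indeed $\mathrm{div}_{g}(\nabla_{g}u)=\Delta_{g}u$, and since $\pi$ is a submersion its dilation $\lambda$ is smooth and positive, so $\mu$, and hence $a,b,c$, are positive $\mathcal{C}^{2}$-functions on $M$, as Theorem \ref{thm:main} requires.

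Second, I would compute the function $\Gamma$ of Theorem \ref{thm:main} for these coefficients:
\[
\Gamma(x)=\frac{1^{\frac{n}{2}}\,\mu(x)^{-\left(\frac{p}{p-2}-\frac{n}{2}\right)}}{\mu(x)^{-\frac{2}{p-2}}}=\mu(x)^{\frac{n}{2}-\frac{p}{p-2}+\frac{2}{p-2}}=\mu(x)^{\frac{n}{2}-1}=\mu(x)^{\frac{n-2}{2}},
\]
where I have used $\frac{p}{p-2}-\frac{2}{p-2}=1$. Hence $\Gamma=\mu^{\frac{n-2}{2}}$, so the assumption that $K$ is a $\mathcal{C}^{1}$-stable critical set for $\mu^{\frac{n-2}{2}}$ is precisely the assumption that $K$ is a $\mathcal{C}^{1}$-stable critical set for $\Gamma$ in the sense of Definition \ref{def:stable}. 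Theorem \ref{thm:main} then provides, for every $\varepsilon$ small enough, a positive solution $u_{\varepsilon}\in H^{1}_{g}(M)$ of (\ref{eq:M}) concentrating at some point $x_{0}\in K$ as $\varepsilon\rightarrow 0$.

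Third, I would lift this solution to $\mathfrak{M}$. By Proposition \ref{prop:hm}, $v_{\varepsilon}:=u_{\varepsilon}\circ\pi$ is a positive solution of (\ref{eq:Mfrac}), that is, of the problem $(\wp_{\varepsilon})$ of the Introduction. Because $\pi$ is a submersion of the compact manifold $\mathfrak{M}$ onto $M$, the fiber $\pi^{-1}(x_{0})$ is a compact submanifold of dimension $m-n$, and $\pi$ maps a tubular neighborhood of $\pi^{-1}(x_{0})$ submersively onto a neighborhood of $x_{0}$; pulling back along $\pi$ the concentration profile of $u_{\varepsilon}$ at $x_{0}$ therefore shows that $v_{\varepsilon}$ concentrates at $\pi^{-1}(x_{0})$, which is the assertion.

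I do not expect a genuine obstacle here: the corollary is just the identity $\Gamma=\mu^{(n-2)/2}$ combined with the reduction of Proposition \ref{prop:hm}. The single point that calls for a careful, though routine, argument is the last one --- deriving concentration of $v_{\varepsilon}=u_{\varepsilon}\circ\pi$ at the whole fiber $\pi^{-1}(x_{0})$ from concentration of $u_{\varepsilon}$ at the point $x_{0}$ --- and this is an immediate consequence of $\pi$ being a submersion with compact fibers, once the notion of concentration at a submanifold has been spelled out.
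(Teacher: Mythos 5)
Your proposal is correct and follows exactly the route the paper intends (the paper gives no separate proof, deducing the corollary directly from Theorem \ref{thm:main} and Proposition \ref{prop:hm}): take $c\equiv1$, $a=b=1/\mu$ in (\ref{eq:main}), observe $\Gamma=\mu^{\frac{n-2}{2}}$, and lift the concentrating solution by $\pi$. Your exponent computation and the final lifting step match the paper's statements in Subsection \ref{subsec:hm}, so there is nothing to add.
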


\begin{corollary}
\label{cor:wp}Let $\mathfrak{M}$\ be the warped product $M\times_{f^{2}}N,$
$n:=\dim M\geq2,$ $k:=\dim N$ and $K$ be a $\mathcal{C}^{1}$-stable critical
set for the function $f^{k}:M\rightarrow(0,\infty).$ Then for every
$p\in(2,2_{n}^{\ast})$ and $\varepsilon$ small enough, problem
\emph{(\ref{eq:main})} has a solution $v_{\varepsilon}$ which concentrates at
$\{x_{0}\}\times N,$ for some point $x_{0}\in K,$ as $\varepsilon\rightarrow0$.
\end{corollary}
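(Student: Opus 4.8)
The plan is to deduce the statement directly from Theorem \ref{thm:main} and Proposition \ref{prop:wp}; the only substantive work is a bookkeeping computation identifying the function $\Gamma$ for the reduced equation.

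First I would specialize Proposition \ref{prop:wp} to $\mathfrak{M}=M\times_{f^{2}}N$. That proposition identifies solutions $v$ of $(\wp_{\varepsilon})$ on $\mathfrak{M}$ of the form $v=u\circ\pi_{M}$ with solutions $u$ of equation (\ref{eq:div}), which is exactly problem (\ref{eq:main}) for the choice of coefficients $c(x)=a(x)=b(x)=f^{k}(x)$. Since $f$ is smooth and bounded away from $0$ and $\infty$ on the compact manifold $M$, these are positive $\mathcal{C}^{2}$ functions, so the hypotheses on the coefficients in (\ref{eq:main}) are met. I would also note, using that $\pi_{M}$ is a Riemannian submersion with $d\mu_{\mathfrak{g}}=f^{k}\,d\mu_{g}\,d\mu_{h}$ and $|\nabla_{\mathfrak{g}}(u\circ\pi_{M})|_{\mathfrak{g}}=|\nabla_{g}u|_{g}$, that $u\mapsto u\circ\pi_{M}$ carries $H_{g}^{1}(M)$ (with weight $f^{k}$) onto the subspace of $H_{\mathfrak{g}}^{1}(\mathfrak{M})$ of functions constant along the fibers, up to the constant $\mathrm{vol}_{h}(N)$, and that positivity is preserved; hence the solution produced downstairs genuinely yields an admissible solution of $(\wp_{\varepsilon})$.

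Next I would compute $\Gamma$ for this choice of coefficients. Writing $t=f^{k}(x)$,
\[
\Gamma(x)=\frac{t^{\frac{n}{2}}\,t^{\frac{p}{p-2}-\frac{n}{2}}}{t^{\frac{2}{p-2}}}
=t^{\frac{p}{p-2}-\frac{2}{p-2}}=t=f^{k}(x),
\]
so a $\mathcal{C}^{1}$-stable critical set $K$ for $f^{k}$ is precisely a $\mathcal{C}^{1}$-stable critical set for $\Gamma$. Theorem \ref{thm:main} then gives, for every $\varepsilon$ small, a solution $u_{\varepsilon}$ of (\ref{eq:div}) concentrating at some point $x_{0}\in K$ as $\varepsilon\rightarrow0$.

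Finally I would pull back: $v_{\varepsilon}:=u_{\varepsilon}\circ\pi_{M}$ solves $(\wp_{\varepsilon})$ on $\mathfrak{M}$ by Proposition \ref{prop:wp}, and since $u_{\varepsilon}$ concentrates at $x_{0}$, its pullback concentrates along the fiber $\pi_{M}^{-1}(x_{0})=\{x_{0}\}\times N\cong(N,f^{2}(x_{0})h)$, a $k$-dimensional submanifold of $\mathfrak{M}$. The point requiring the most care is exactly this last step: one must make precise the sense in which "$u_{\varepsilon}$ concentrates at $x_{0}$" (a rescaled standard profile centred near $x_{0}$, with the mass leaving every fixed neighbourhood of $x_{0}$) and check that composition with the submersion $\pi_{M}$ produces the corresponding notion of concentration along $\{x_{0}\}\times N$. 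This is where the warped-product geometry enters, but it introduces no analytic difficulty beyond what Theorem \ref{thm:main} already provides.
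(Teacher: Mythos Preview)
Your proposal is correct and follows exactly the approach the paper intends: the corollary is stated without separate proof because it is meant to follow immediately from Theorem \ref{thm:main} applied to equation (\ref{eq:div}) (i.e., (\ref{eq:main}) with $a=b=c=f^{k}$) together with Proposition \ref{prop:wp}, and your computation $\Gamma=f^{k}$ is the one identification needed. The only remark is that the paper's statement of the corollary contains a typo (``problem (\ref{eq:main})'' should read ``problem $(\wp_{\varepsilon})$''), which you have correctly resolved.
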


\subsection{Surfaces of revolution}

The following application illustrates how one can combine these results to
produce solutions which concentrate at different submanifolds.

\begin{theorem}
\label{thm:appl}Let $M$ be a compact smooth Riemannian submanifold of
$\mathbb{R}^{\ell}\times\left(  0,\infty\right)  ,$ without boundary, $n:=\dim
M\geq1,$ and let%
\[
\mathfrak{M}:=\{(y,z)\in\mathbb{R}^{\ell}\times\mathbb{R}^{k+1}:(y,\left\vert
z\right\vert )\in M\},
\]
with the Riemannian metric inherited from $\mathbb{R}^{\ell+k+1}.$ The
following statements hold true:

\begin{enumerate}
\item[(a)] For each $p\in(2,2_{n+k}^{\ast})$ problem $(\wp_{\varepsilon}%
)$\ has a solution $v_{\varepsilon}^{0}$ which concentrates at two points in
$\mathfrak{M}$ as $\varepsilon\rightarrow0$.

\item[(b)] If $k$ is odd then, for each $p\in(2,2_{n+k}^{\ast})$ and each
$j\in\mathbb{N},$ problem $(\wp_{\varepsilon})$\ has a solution
$v_{\varepsilon}^{0,j}$ which concentrates at $j$ points in $\mathfrak{M}$ as
$\varepsilon\rightarrow0$.

\item[(c)] If $k\geq3$ is odd then, for each $p\in(2,2_{n+k-1}^{\ast}),$
problem $(\wp_{\varepsilon})$\ has a solution $v_{\varepsilon}^{1}$ which
concentrates at a $1$-dimensional sphere in $\mathfrak{M}$ as $\varepsilon
\rightarrow0$.

\item[(d)] If $k=4m+3$ then, for each $p\in(2,2_{n+k-3}^{\ast}),$ problem
$(\wp_{\varepsilon})$\ has a solution $v_{\varepsilon}^{3}$ which concentrates
at a $3$-dimensional sphere in $\mathfrak{M}$ as $\varepsilon\rightarrow0.$

\item[(e)] If $k=15$ then, for each $p\in(2,2_{n+8}^{\ast}),$ problem
$(\wp_{\varepsilon})$\ has a solution $v_{\varepsilon}^{7}$ which concentrates
at a $7$-dimensional sphere in $\mathfrak{M}$ as $\varepsilon\rightarrow0.$

\item[(f)] If $n\geq2$ then, for each $p\in(2,2_{n}^{\ast}),$ problem
$(\wp_{\varepsilon})$\ has a solution $v_{\varepsilon}^{k}$ which concentrates
at a $k$-dimensional sphere in $\mathfrak{M}$ as $\varepsilon\rightarrow0$.
\end{enumerate}
\end{theorem}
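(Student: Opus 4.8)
The plan is to derive each statement of Theorem \ref{thm:appl} from Theorem \ref{thm:main}, via Corollaries \ref{cor:hm} and \ref{cor:wp}, by exhibiting $\mathfrak{M}$ as an iterated warped product and/or as the total space of a harmonic morphism, and then locating a $\mathcal{C}^1$-stable critical set for the relevant function on the reduced base manifold. First I would set up the geometry: writing points of $\mathfrak{M}$ as $(y,z)$ with $(y,|z|)\in M\subset\mathbb{R}^\ell\times(0,\infty)$, the map $(y,z)\mapsto(y,|z|)$ realizes $\mathfrak{M}$ as the warped product $M\times_{f^2}\mathbb{S}^{k}$, where $f(y,r):=r$ is the last coordinate function on $M$ (strictly positive since $M\subset\mathbb{R}^\ell\times(0,\infty)$), because the Euclidean metric on $\mathbb{R}^{k+1}$ in polar coordinates is $dr^2+r^2 g_{\mathbb{S}^k}$. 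Thus Proposition \ref{prop:wp} and Corollary \ref{cor:wp} reduce problem $(\wp_\varepsilon)$ on $\mathfrak{M}$ to problem (\ref{eq:main}) on $M$ with $c=a=b=f^k$, for which the function $\Gamma$ in Theorem \ref{thm:main} reduces (up to a positive constant power) to $f^k$ itself; I would then observe that $f$, being the restriction to the compact manifold $M$ of a coordinate function, attains its maximum and minimum on $M$, and an isolated global maximum or minimum is automatically a $\mathcal{C}^1$-stable critical set, which handles the existence of a concentration point $x_0\in M$ for part (f) when $n=\dim M\geq2$. For the concentration set I would note $\pi_M^{-1}(x_0)\cong\{x_0\}\times\mathbb{S}^k$, giving concentration at a $k$-dimensional sphere.

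Next I would treat the lower-dimensional concentration sets by composing the warped-product reduction with Hopf fibrations of the sphere factor. When $k$ is odd, $\mathbb{S}^k=\mathbb{S}^{2m+1}\to\mathbb{C}P^m$ (for $k=1$ this is $\mathbb{S}^1\to\text{point}$, which degenerates to just collapsing the circle; for $k=3$ also $\mathbb{S}^3\to\mathbb{S}^2$ via the quaternionic Hopf map, and for $k=7$, $\mathbb{S}^7\to\mathbb{S}^4$); when $k=4m+3$, $\mathbb{S}^k\to\mathbb{H}P^m$; when $k=15$, $\mathbb{S}^{15}\to\mathbb{S}^8$. Each of these is a Riemannian submersion with $\lambda\equiv1$ and totally geodesic fibers, hence a harmonic morphism satisfying (\ref{eq:hm}) trivially. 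The point is that these fiber-bundle structures are compatible with the warped-product structure: $\mathfrak{M}=M\times_{f^2}\mathbb{S}^k$ fibers over $M\times_{f^2}B$ (with $B=\mathbb{C}P^m$, $\mathbb{S}^2$, $\mathbb{H}P^m$, $\mathbb{S}^8$, etc.) with the same totally geodesic fibers, and this composite $\mathfrak{M}\to M\times_{f^2}B$ is again a harmonic morphism with constant dilation, so Proposition \ref{prop:hm} with $\mu\equiv1$ (thus $\mu^{(n-2)/2}\equiv1$ plays no role) composed with Proposition \ref{prop:wp} again reduces $(\wp_\varepsilon)$ to problem (\ref{eq:main}) on $M$ with the same functions — and the same $\Gamma\propto f^k$ — but now the preimage of $x_0$ is $\{x_0\}\times(\text{fiber})$, i.e. $\mathbb{S}^0$ (two points, part (a)), $\mathbb{S}^1$ (part (c)), $\mathbb{S}^3$ (part (d)), $\mathbb{S}^7$ (part (e)). The subcritical range of $p$ shifts accordingly: reducing through $\ell$ extra dimensions of sphere means $p$ only needs to be subcritical in dimension $\dim M=n$ (parts where the whole $\mathbb{S}^k$ collapses), or in the intermediate dimension when only part of the fiber is collapsed, explaining the ranges $2_{n+k}^*$, $2_{n+k-1}^*$, $2_{n+k-3}^*$, $2_{n+8}^*$, $2_n^*$.

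For parts (a) and (b), the multiplicity of concentration points, I would use a slightly different device: after collapsing $\mathbb{S}^k$ entirely (possible when $k$ is odd, since then $\mathbb{S}^k$ is the total space of a fibration over a point in the degenerate case — more precisely, when $k=1$ directly, and when $k$ odd one can fiber $\mathbb{S}^k\to\mathbb{C}P^m$ and iterate, or simply use that an odd sphere admits a free $\mathbb{S}^1$-action) one lands on problem (\ref{eq:main}) on $M$ with $\Gamma\propto f^k$; then either (a) $f$ has both a maximum and a minimum on $M$, each an isolated $\mathcal{C}^1$-stable critical set, yielding two distinct solutions that together concentrate at two points of $\mathfrak{M}$, or (b) one invokes a known result (as in \cite{dmp}) producing $j$-peak solutions concentrating at $j$ copies, which transported back up gives concentration at $j$ points. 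The main obstacle I anticipate is verifying carefully that composing the warped-product projection with a Hopf fibration of the sphere fiber yields a genuine harmonic morphism on $\mathfrak{M}$ with dilation of the required form $\lambda^2=\mu\circ\pi$ — one must check that horizontal conformality and equation (\ref{eq:hm}) are preserved under this composition, which amounts to the fact that the fibers remain totally geodesic and the dilation stays constant along them; once that compatibility is established, everything else is a bookkeeping of dimensions and an appeal to compactness of $M$ to locate the stable critical set of $f^k$.
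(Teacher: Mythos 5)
Your reduction of $\mathfrak{M}$ to the warped product $M\times_{f^2}\mathbb{S}^k$ and your treatment of part (f) via Corollary \ref{cor:wp} (with $\Gamma=f^k$ and the set of extrema of $f$ as a stable critical set) agree with the paper. The genuine gap is in parts (c), (d), (e): you assert that the fiberwise Hopf map $M\times_{f^2}\mathbb{S}^k\rightarrow M\times_{f^2}B$ is again a harmonic morphism with constant dilation because ``the fibers remain totally geodesic.'' This is false unless $f$ is constant: inside the warped product the Hopf circles $\{x\}\times\mathbb{S}^1$ have mean curvature $\kappa^{\mathcal{V}}(y,z)=-z/|z|^2\neq0$ (this is computed in the paper), so with $\lambda\equiv1$ the harmonic morphism equation \eqref{eq:hm} reads $(m-n)\kappa^{\mathcal{V}}=0$ and fails whenever the fiber has positive dimension. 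The paper's actual device is to conformally deform the \emph{target} metric, taking $\mathfrak{N}=(M\times\mathbb{C}P^m,\,f^{2/(d-2)}g+f^{2/(d-2)+2}\widetilde h)$ with $d=n+k-1$, so that $\pi$ becomes horizontally conformal with the nonconstant dilation $\lambda=f^{1/(d-2)}$, which does satisfy \eqref{eq:hm}; then $\lambda^2=\mu\circ\pi$ with $\mu((y,t),\zeta)=t^{2/(d-2)}$ and Corollary \ref{cor:hm} applies on the $d$-dimensional base. Your dimensional bookkeeping also does not close: if you push all the way down to $M$, the fiber over $x_0\in M$ is the whole $\mathbb{S}^k$ (not $\mathbb{S}^1$, $\mathbb{S}^3$, $\mathbb{S}^7$), and the ranges $2^*_{n+k-1}$, $2^*_{n+k-3}$, $2^*_{n+8}$ cannot arise; to concentrate on the small Hopf fiber you must stop at the intermediate base and solve a point-concentration problem there, which is exactly what forces the nonconstant dilation issue above.

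Parts (a) and (b) are also not obtained correctly in your proposal. The range $p\in(2,2^*_{n+k})$ signals a reduction that preserves the dimension $n+k$: the paper uses the Riemannian covering $M\times_{f^2}\mathbb{S}^k\rightarrow M\times_{f^2}\mathbb{R}P^k$ (respectively the quotient by the free $\Gamma_j$-action on $\mathbb{S}^k$ for $k$ odd), which is a harmonic morphism with $\lambda\equiv1$ and \emph{discrete} fiber, and then invokes the subcritical one-peak result of \cite{mp} on the quotient; the lifted solution concentrates at the $2$ (resp. $j$) points of the fiber. Your alternative -- collapsing all of $\mathbb{S}^k$ to get a problem on $M$ and taking one solution peaking at a maximum of $f$ and another at a minimum, or invoking \cite{dmp} -- cannot give the statement: each solution produced on $M$ lifts to a solution of $(\wp_\varepsilon)$ concentrating at a $k$-dimensional sphere $\{x_0\}\times\mathbb{S}^k$, not at points, and two distinct solutions cannot be merged into a single solution concentrating at two points. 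Moreover this route needs $n=\dim M\geq2$ (Theorem \ref{thm:main} and Corollary \ref{cor:wp} require it), whereas (a) and (b) are asserted for $n\geq1$, and it would produce the wrong exponent range. Finally, a small point in (f): extrema of $f^k$ need not be isolated, but the full set of global maxima (or minima) is a $\mathcal{C}^1$-stable critical set, so the conclusion stands after this adjustment.
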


\begin{proof}
Observe that $\mathfrak{M}$ is isometric to the warped product $M\times
_{f^{2}}\mathbb{S}^{k}=(M\times\mathbb{S}^{k},g+f^{2}h)$ where $g$ is the
euclidean metric on $M$, $h$ is the standard metric on $\mathbb{S}^{k}$ and
$f:M\rightarrow(0,\infty)$ is the projection $f(y,t):=t.$ The isometry is
given by%
\[
\mathfrak{M}\ni(y,z)\longmapsto\left(  (y,\left\vert z\right\vert ),\frac
{z}{\left\vert z\right\vert }\right)  \in M\times_{f^{2}}\mathbb{S}^{k}.
\]

(a): \ The product $\pi:M\times_{f^{2}}\mathbb{S}^{k}\rightarrow
M\times_{f^{2}}\mathbb{R}P^{k}$ of the identity map on $M$ with the Hopf
fibration is a Riemannian submersion whose fiber is diffeomorphic to
$\mathbb{S}^{0}.$ Hence, it is a harmonic morphism with dilation
$\lambda\equiv1.$ Theorem 1.2 in \cite{mp} yields a positive solution
$u_{\varepsilon}^{0}\in H_{g}^{1}(M\times_{f^{2}}\mathbb{R}P^{k})$ to equation
(\ref{eq:M}) with $\mu\equiv1,$ which concentrates at a $\mathcal{C}^{1}%
$-stable critical point $\xi_{0}$ of the scalar curvature of $M\times_{f^{2}%
}\mathbb{R}P^{k},$ for every $p\in(2,2_{n+k}^{\ast}).$ By Proposition
\ref{prop:hm}, $v_{\varepsilon}:=u_{\varepsilon}\circ\pi$ is a solution to
$(\wp_{\varepsilon})$ which concentrates at the pair of points $\pi^{-1}%
(\xi_{0})$ as $\varepsilon\rightarrow0$.

(b): \ If $k$ is odd we identify $\mathbb{R}^{k+1}$ with $\mathbb{C}%
^{\frac{k+1}{2}}.$ The group $\Gamma_{j}:=\{\mathrm{e}^{2\pi\mathrm{i}%
s/j}:s=0,...,j-1\}$ acts by multiplication on each coordinate of
$\mathbb{C}^{\frac{k+1}{2}}.$ Since this action is free on $\mathbb{S}^{k},$
the orbit space $\mathbb{S}^{k}/\Gamma_{j}$ is a smooth manifold. We endow
$\mathbb{S}^{k}/\Gamma_{j}$ with the Riemannian metric which turns the orbit
map $\mathbb{S}^{k}\rightarrow\mathbb{S}^{k}/\Gamma_{j}$ into a Riemannian
submersion. Now we can argue as in (a), replacing the Hopf map by this orbit map.

(c): \ Let $k=2m+1$ and $\mathfrak{N}:=(M\times\mathbb{C}P^{m},f^{\frac
{2}{d-2}}g+f^{\frac{2}{d-2}+2}\widetilde{h})$ where $d:=2m+n=\dim
(M\times\mathbb{C}P^{m})$ and $\widetilde{h}$ is the standard metric on
$\mathbb{C}P^{m}.$ If we identify $\mathfrak{M}$\ with $(M\times
\mathbb{S}^{2m+1},g+f^{2}h)$, the product of the identity map on $M$ with the
Hopf fibration $\mathbb{S}^{2m+1}\rightarrow\mathbb{C}P^{m}$ is a horizontally
conformal submersion $\pi:\mathfrak{M}\rightarrow\mathfrak{N}$ with dilation
$\lambda(y,z)=f(y,\left\vert z\right\vert )^{\frac{1}{d-2}}=\left\vert
z\right\vert ^{\frac{1}{d-2}},$ whose fiber is diffeomorphic to $\mathbb{S}%
^{1}.$ The mean curvature of the fibers is $\kappa^{\mathcal{V}}%
(y,z)=-\frac{z}{\left\vert z\right\vert ^{2}}.$ Therefore, $\lambda$ satisfies
equation (\ref{eq:hm}) and, consequently, $\pi$ it is a harmonic morphism.
Moreover, $\lambda^{2}=\mu\circ\pi,$ where $\mu((y,t),\zeta)=t^{\frac{2}{d-2}%
}$ for $(y,t)\in M,$ $\zeta\in\mathbb{C}P^{m}$. Corollary \ref{cor:hm} implies
that, for every $p\in(2,2_{d}^{\ast})$ and $\varepsilon$ small enough, problem
(\ref{eq:main}) has a solution $v_{\varepsilon}^{1}$ which concentrates at
some fiber of $\pi$ over a $\mathcal{C}^{1}$-stable critical set for the
function $((y,t),\zeta)\mapsto t$ as $\varepsilon\rightarrow0.$

(d): \ Let $\mathfrak{N}:=(M\times\mathbb{H}P^{m},f^{\frac{6}{d-2}}%
g+f^{\frac{6}{d-2}+2}\widetilde{h})$ where $d:=4m+n=\dim(M\times
\mathbb{H}P^{m})$ and $\widetilde{h}$ is the standard metric on $\mathbb{H}%
P^{m}.$ If we identify $\mathfrak{M}$\ with $(M\times\mathbb{S}^{4m+3}%
,g+f^{2}h)$, the product of the identity map on $M$ with the Hopf fibration
$\mathbb{S}^{4m+3}\rightarrow\mathbb{H}P^{m}$ is a horizontally conformal
submersion $\pi:\mathfrak{M}\rightarrow\mathfrak{N}$ with dilation
$\lambda(y,z)=\left\vert z\right\vert ^{\frac{3}{d-2}},$ whose fiber is
diffeomorphic to $\mathbb{S}^{3}.$ The mean curvature of the fibers is
$\kappa^{\mathcal{V}}(y,z)=-\frac{z}{\left\vert z\right\vert ^{2}}.$
Therefore, $\lambda$ satisfies equation (\ref{eq:hm}) and $\pi$ it is a
harmonic morphism. Corollary \ref{cor:hm} implies that, for every
$p\in(2,2_{d}^{\ast})$ and $\varepsilon$ small enough, problem (\ref{eq:main})
has a solution $v_{\varepsilon}^{3}$ which concentrates at some fiber of $\pi$
over a $\mathcal{C}^{1}$-stable critical set for the function $((y,t),\zeta
)\mapsto t^{3}$ as $\varepsilon\rightarrow0.$

(e): \ The proof is similar to that of (c) and (d), this time using the Hopf
fibration $\mathbb{S}^{15}\rightarrow\mathbb{S}^{8}.$

(f): \ This follows immediately from Corollary \ref{cor:wp}.
\end{proof}

Note that the proof contains information on the location of the sets of
concentration. We have recently learned that a similar statement for annuli in
$\mathbb{R}^{N}$ was proved by Ruf and Srikanth \cite{RSp}.

The rest of this paper is devoted to the proof of Theorem \ref{thm:main},
which is based on the well-known Lyapunov-Schmidt reduction. The outline of
the paper is as follows: In Section \ref{sec:limprob} we discuss the limit
problem. In Section \ref{sec:proof} we outline the Lyapunov-Schmidt procedure
and use it to prove Theorem \ref{thm:main}. In Section \ref{sec:finitedim} we
establish the finite dimensional reduction, and in Section \ref{sec:redfunc}%
\ we obtain the expansion of the reduced functional. We collect some technical
facts in Appendix \ref{sec:appendix}.

\section{The limit problem}

\label{sec:limprob}Let $(M,g)$ be an $n$-dimensional compact smooth Riemannian
manifold without boundary, $n\geq2,$ $\ a,b,c$ be positive real-valued
$\mathcal{C}^{2}$-functions on $M$, and $\varepsilon$ be a positive parameter.

We denote by $H_{\varepsilon}$ the Sobolev space $H_{g}^{1}(M)$ with the
scalar product
\[
\left\langle u,v\right\rangle _{\varepsilon}:=\frac{1}{\varepsilon^{n-2}}%
\int_{M}c(x)\nabla_{g}u\nabla_{g}vd\mu_{g}+\frac{1}{\varepsilon^{n}}\int
_{M}a(x)uvd\mu_{g}%
\]
and norm
\[
\Vert u\Vert_{\varepsilon}:=\left\langle u,u\right\rangle _{\varepsilon}%
^{1/2}=\frac{1}{\varepsilon^{n-2}}\int_{M}c(x)|\nabla_{g}u|^{2}d\mu_{g}%
+\frac{1}{\varepsilon^{n}}\int_{M}a(x)u^{2}d\mu_{g}.
\]
Similarly, we denote by $L_{\varepsilon}^{q}$ be the Lebesgue space $L_{g}%
^{q}(M)$ endowed with the norm
\[
|u|_{q,\varepsilon}:=\left(  \frac{1}{\varepsilon^{n}}\int_{M}|u|^{q}d\mu
_{g}\right)  ^{1/q}.
\]
We recall that $|u|_{q,\varepsilon}\leq C\Vert u\Vert_{\varepsilon}$ for
$q\in\lbrack2,2_{n}^{\ast}]$, where the constant $C$ does not depend on
$\varepsilon$.

Fix $p\in(2,2_{n}^{\ast})$ and set%
\[
{A(x):=\frac{a(x)}{c(x)},}\text{\qquad}{B(x):=\frac{b(x)}{c(x)},}\text{\qquad
}\gamma(\xi):=\left(  \frac{a(\xi)}{b(\xi)}\right)  ^{\frac{1}{p-2}}.
\]
For $\xi_{0}\in M$ let $V=V^{\xi_{0}}$ be the unique positive spherically
symmetric solution to
\begin{equation}
-c(\xi_{0})\Delta V+a(\xi_{0})V=b(\xi_{0})V^{p-1}\text{\quad in }%
\mathbb{R}^{n}.\label{eq:Vxi-1}%
\end{equation}
This is the limit equation for problem (\ref{eq:main}) in the tangent space
$T_{\xi_{0}}M\equiv\mathbb{R}^{n}.$ It is equivalent to
\begin{equation}
-\Delta V+A(\xi_{0})V=B(\xi_{0})V^{p-1}\text{\quad in }\mathbb{R}%
^{n}.\label{eq:Vxi}%
\end{equation}
A simple computation shows that
\begin{equation}
V^{\xi_{0}}(z)=\gamma(\xi_{0})U(\sqrt{A(\xi_{0})}z),\label{eq:rescaling}%
\end{equation}
where $U$ is the unique positive spherically symmetric solution of
\begin{equation}
-\Delta U+U=U^{p-1}\text{\quad in }\mathbb{R}^{n}.\label{eq:U}%
\end{equation}

Fix $r>0$ smaller than the injectivity radius of $M.$ Then, the exponential
map $\exp_{\xi}:B(0,r)\rightarrow B_{g}(\xi,r)$ is a diffeomorphism for every
$\xi\in M$. Here the tangent space $T_{\xi}M$ is identified with
$\mathbb{R}^{n}$, $B(0,r)$ is the ball of radius $r$ in $\mathbb{R}^{n}$
centered at $0,$ and $B_{g}(\xi,r)$ denotes the ball of radius $r$ in $M$
centered at $\xi$ with respect to the distance induced by the Riemannian
metric $g$. Let $\chi\in\mathcal{C}^{\infty}(\mathbb{R}^{n})$ be a radial
cut-off function such that $\chi(z)=1$ if $\left\vert z\right\vert \leq r/2$
and $\chi(z)=0$ if $\left\vert z\right\vert \geq r.$ For $\xi\in M$ and
$\varepsilon>0$ we define $W_{\varepsilon,\xi}\in H_{g}^{1}(M)$ by
\[
W_{\varepsilon,\xi}(x):=\left\{
\begin{array}
[c]{ll}%
V^{\xi}\left(  \frac{1}{\varepsilon}\exp_{\xi}^{-1}(x)\right)  \chi\left(
\exp_{\xi}^{-1}(x)\right)  & \text{\qquad if }x\in B_{g}(\xi,r),\\
0 & \text{\qquad otherwise.}%
\end{array}
\right.
\]
Setting $V_{\varepsilon}(z):=V\left(  \frac{z}{\varepsilon}\right)  $ and
$y:=\exp_{\xi}^{-1}x$ we have that
\[
W_{\varepsilon,\xi}(\exp_{\xi}(y))=V^{\xi}\left(  \frac{y}{\varepsilon
}\right)  \chi(y)=V_{\varepsilon}^{\xi}(y)\chi(y),
\]
so the function $W_{\varepsilon,\xi}$ is simply the function $V^{\xi}$
rescaled, cut off and read in a normal neighborhood of $\xi$ in $M.$

Similarly, for $i=1,\dots,n$ we define
\[
Z_{\varepsilon,\xi}^{i}(x):=\left\{
\begin{array}
[c]{ll}%
\psi_{\xi}^{i}\left(  \frac{1}{\varepsilon}\exp_{\xi}^{-1}(x)\right)
\chi\left(  \exp_{\xi}^{-1}(x)\right)  & \text{\qquad if }x\in B_{g}(\xi,r),\\
0 & \text{\qquad otherwise,}%
\end{array}
\right.
\]
where
\[
\psi_{\xi}^{i}(\eta)=\frac{\partial}{\partial\eta_{i}}V^{\xi}(\eta)=\gamma
(\xi)\sqrt{A(\xi)}\frac{\partial U}{\partial\eta_{i}}(\sqrt{A(\xi)}\eta).
\]
The functions $\psi_{\xi}^{i}$ are solutions to the linearized equation
\begin{equation}
-\Delta\psi+A(\xi)\psi=(p-1)B(\xi)\left(  V^{\xi}\right)  ^{p-2}%
\psi\text{\quad in }\mathbb{R}^{n}. \label{eq:lin}%
\end{equation}

Next, we compute the derivatives of $W_{\varepsilon,\xi}$ with respect to
$\xi$ in a normal neighborhood. Fix $\xi_{0}\in M$. We write the points
$\xi\in B_{g}(\xi_{0},r)$ as
\[
\xi=\xi(y):=\exp_{\xi_{0}}(y)\text{\qquad with }y\in B(0,r),
\]
and consider the function
\[
\mathcal{E}(y,x):=\exp_{\xi(y)}^{-1}(x)=\exp_{\exp_{\xi_{0}}(y)}^{-1}(x)
\]
defined on the set $\{(y,x):y\in B(0,r),$ $x\in B_{g}(\xi(y),r)\}$. Then we
can write
\begin{align*}
W_{\varepsilon,\xi(y)}(x)  &  =\gamma(\xi(y))U_{\varepsilon}(\sqrt{A(\xi
(y))}\exp_{\xi(y)}^{-1}(x))\chi(\exp_{\xi(y)}^{-1}(x))\\
&  =\tilde{\gamma}(y)U_{\varepsilon}(\sqrt{\tilde{A}(y)}\mathcal{E}%
(y,x))\chi(\mathcal{E}(y,x))
\end{align*}
where $\tilde{A}(y):=A(\exp_{\xi_{0}}(y))$ and $\tilde{\gamma}(y):=\gamma
(\exp_{\xi_{0}}(y))$. Thus, we have
\begin{align}
\frac{\partial}{\partial y_{1}}W_{\varepsilon,\xi(y)}  &  =\left(
\frac{\partial}{\partial y_{1}}\tilde{\gamma}(y)\right)  U_{\varepsilon}%
(\sqrt{\tilde{A}(y)}\mathcal{E}(y,x))\chi(\mathcal{E}(y,x))\nonumber\\
&  +\tilde{\gamma}(y)U_{\varepsilon}(\sqrt{\tilde{A}(y)}\mathcal{E}%
(y,x))\frac{\partial\chi}{\partial z_{k}}(\mathcal{E}(y,x))\frac{\partial
}{\partial y_{1}}\mathcal{E}_{k}(y,x)\label{eq:derWeps}\\
&  +\frac{1}{\varepsilon}\tilde{\gamma}(y)\chi(\mathcal{E}(y,x))\frac
{\partial}{\partial z_{k}}\left(  U_{\varepsilon}(\sqrt{\tilde{A}%
(y)}\mathcal{E}(y,x))\right)  \frac{\partial}{\partial y_{1}}\mathcal{E}%
_{k}(y,x).\nonumber
\end{align}

One has the Taylor expansions
\begin{equation}
\frac{\partial}{\partial y_{h}}\mathcal{E}_{k}(0,\exp_{\xi_{0}}\varepsilon
z)=-\delta_{hk}+O(\varepsilon^{2}|z|^{2}), \label{eq:espE}%
\end{equation}%
\begin{align}
g^{ij}(\varepsilon z)  &  =\delta_{ij}+\frac{\varepsilon^{2}}{2}\sum
_{r,k=1}^{n}\frac{\partial^{2}g^{ij}}{\partial z_{r}\partial z_{k}}%
(0)z_{r}z_{k}+O(\varepsilon^{3}|z|^{3})=\delta_{ij}+o(\varepsilon
),\label{eq:espg1}\\
\left\vert g(\varepsilon z)\right\vert ^{\frac{1}{2}}  &  =1-\frac
{\varepsilon^{2}}{4}\sum_{i,r,k=1}^{n}\frac{\partial^{2}g^{ii}}{\partial
z_{r}\partial z_{k}}(0)z_{r}z_{k}+O(\varepsilon^{3}|z|^{3})=1+o(\varepsilon),
\label{eq:espg2}%
\end{align}
where, as usual,
\[
(g^{ij}(z))\text{ is the inverse matrix of }(g_{ij}(z))\text{\qquad and\qquad
}\left\vert g(z)\right\vert :=\det(g_{ij}(z)).
\]

\begin{proposition}
\label{prop:Zi}There exists a positive constant $C$ such that
\[
\left(  Z_{\varepsilon,\xi}^{h},Z_{\varepsilon,\xi}^{k}\right)  _{\varepsilon
}=C\delta_{hk}+o(1).
\]

\end{proposition}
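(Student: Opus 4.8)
The plan is to compute the scalar product $\left(Z_{\varepsilon,\xi}^{h},Z_{\varepsilon,\xi}^{k}\right)_{\varepsilon}$ directly by passing to normal coordinates at $\xi$ via the exponential map, rescaling by $\varepsilon$, and extracting the leading term. First I would use that $Z_{\varepsilon,\xi}^{i}$ is supported in $B_g(\xi,r)$, write the integral there, and change variables $x=\exp_\xi(y)$, so that $d\mu_g=|g(y)|^{1/2}dy$ and $\nabla_g$ is expressed through the metric coefficients $g^{ij}(y)$. Then I would rescale $y=\varepsilon z$; the factors $\varepsilon^{-n}$ and $\varepsilon^{-(n-2)}$ in the definition of $\langle\cdot,\cdot\rangle_\varepsilon$ are exactly compensated by the Jacobian $\varepsilon^n$ and (after the chain rule produces $\varepsilon^{-1}$ in each spatial derivative of $Z_{\varepsilon,\xi}^i$) by $\varepsilon^{n}\cdot\varepsilon^{-2}=\varepsilon^{n-2}$ in the gradient term. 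The cut-off $\chi(\varepsilon z)$ equals $1$ on the region of interest and its derivatives are supported where $|\varepsilon z|\geq r/2$, i.e. $|z|\geq r/(2\varepsilon)$, where $\psi_\xi^i$ and its derivatives decay exponentially, so all terms involving $\nabla\chi$ contribute $O(\varepsilon^\infty)$ and may be discarded.

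After the rescaling, using the expansions (\ref{eq:espg1}) and (\ref{eq:espg2}), namely $g^{ij}(\varepsilon z)=\delta_{ij}+o(\varepsilon)$ and $|g(\varepsilon z)|^{1/2}=1+o(\varepsilon)$ uniformly for $z$ in compact sets (and with exponentially decaying tails controlled by the decay of $\psi_\xi^i$), the scalar product reduces to
\[
\left(Z_{\varepsilon,\xi}^{h},Z_{\varepsilon,\xi}^{k}\right)_{\varepsilon}=\int_{\mathbb{R}^{n}}\left(\nabla\psi_{\xi}^{h}\cdot\nabla\psi_{\xi}^{k}+A(\xi)\,\psi_{\xi}^{h}\psi_{\xi}^{k}\right)dz+o(1).
\]
Here I divided numerator and denominator by $c(\xi)$ to get $A=a/c$ and absorbed the leftover $c(\xi)$; one must check this is consistent with the stated conclusion — indeed the constant $C$ in the proposition is allowed to depend on $\xi$ only through quantities that will later be shown to be continuous, or more precisely the statement should be read with $C$ a fixed positive number, which forces using the explicit form $\psi_\xi^i(\eta)=\gamma(\xi)\sqrt{A(\xi)}\,\partial_i U(\sqrt{A(\xi)}\eta)$ and a further change of variables $\eta\mapsto\sqrt{A(\xi)}\eta$ to pull out all $\xi$-dependence into a prefactor; what remains is $\int_{\mathbb{R}^n}\left(|\nabla\partial_h U|^2+|\partial_h U|^2\right)$ times a power of $A(\xi)$ and $\gamma(\xi)^2$ times $c(\xi)$, and one checks (using the equation (\ref{eq:U}) for $U$ and $\gamma=(a/b)^{1/(p-2)}$) that the $\xi$-dependent factor is in fact constant, or else one simply keeps the convention that $C$ denotes a positive quantity bounded away from $0$ and $\infty$. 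I would also invoke the orthogonality $\int_{\mathbb{R}^n}\partial_h U\,\partial_k U=0$ for $h\neq k$, which follows from the radial symmetry of $U$: $\partial_h U=U'(|z|)\,z_h/|z|$, and $\int z_h z_k\,(U'(|z|)/|z|)^2\,dz=0$ by oddness when $h\neq k$, and by symmetry among coordinates the diagonal values are all equal; this produces the $\delta_{hk}$.

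The main obstacle is bookkeeping the error terms uniformly: one must verify that replacing $g^{ij}(\varepsilon z)$ by $\delta_{ij}$ and $|g(\varepsilon z)|^{1/2}$ by $1$ inside the integral costs only $o(1)$, which requires splitting the domain into $|z|\leq R$ (where the expansions (\ref{eq:espg1})--(\ref{eq:espg2}) give a bound $o(\varepsilon)$ times an integrable function) and $|z|\geq R$ (where the exponential decay of $\psi_\xi^i$ and of $\nabla\psi_\xi^i$ makes the whole contribution arbitrarily small, uniformly in $\varepsilon$), and then letting $R\to\infty$. A secondary technical point is handling the cross terms in $|\nabla_g Z_{\varepsilon,\xi}^i|^2$ coming from the chain rule and the metric: after rescaling, $\nabla_g$ acting on $\psi_\xi^i(\cdot/\varepsilon)\chi$ produces $\varepsilon^{-1}\nabla\psi_\xi^i$ plus lower-order $\chi$-derivative terms, and one pairs these against the metric matrix; the details are routine given the expansions already recorded in the excerpt, so I would state the reduction and refer the error estimates to the appendix.
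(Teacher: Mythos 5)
Your proposal is correct and follows essentially the same route as the paper: pass to normal coordinates, rescale by $\varepsilon$, use the expansions (\ref{eq:espg1})--(\ref{eq:espg2}) together with the exponential decay of $\psi_{\xi}^{i}$ to discard the cut-off and metric corrections, arriving at $c(\xi)\int_{\mathbb{R}^{n}}\nabla\psi_{\xi}^{h}\cdot\nabla\psi_{\xi}^{k}+a(\xi)\int_{\mathbb{R}^{n}}\psi_{\xi}^{h}\psi_{\xi}^{k}+o(1)$, with the $\delta_{hk}$ coming from the radial symmetry of $U$. One small correction to your aside: after pulling out the explicit form of $\psi_{\xi}^{i}$ the prefactor is $\gamma(\xi)^{2}a(\xi)^{2-\frac{n}{2}}c(\xi)^{\frac{n}{2}-1}$ times a universal constant, which is \emph{not} independent of $\xi$, so the correct reading (and the one the paper implicitly uses) is your fallback: $C=C(\xi)$ is continuous and bounded away from $0$ and $\infty$ on the compact manifold $M$, which is all that the later arguments require.
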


\begin{proof}
Using the Taylor expansions of $g^{ij}(\varepsilon z)$, $|g(\varepsilon
z)|^{\frac{1}{2}}$, $a(\exp_{\xi}(\varepsilon z))$ and $c(\exp_{\xi
}(\varepsilon z))$ we obtain
\begin{align*}
\left\langle Z_{\varepsilon,\xi}^{h},Z_{\varepsilon,\xi}^{k}\right\rangle
_{\varepsilon}  &  =\frac{1}{\varepsilon^{n}}\int_{M}\varepsilon^{2}%
c(x)\nabla_{g}Z_{\varepsilon,\xi}^{h}(x)\nabla_{g}Z_{\varepsilon,\xi}%
^{k}(x)+a(x)Z_{\varepsilon,\xi}^{h}(x)Z_{\varepsilon,\xi}^{k}(x)d\mu_{g}\\
=  &  \int_{B(0,r/\varepsilon)}\sum_{ij}c(\exp_{\xi}(\varepsilon z))g_{\xi
}^{ij}(\varepsilon z)\frac{\partial}{\partial z_{i}}(\psi_{\xi}^{h}%
(z)\chi(\varepsilon z))\frac{\partial}{\partial z_{j}}(\psi_{\xi}^{h}%
(z)\chi(\varepsilon z))|g_{\xi}(\varepsilon z)|^{\frac{1}{2}}dz\\
&  +\int_{B(0,r/\varepsilon)}a(\exp_{\xi}(\varepsilon z))\psi_{\xi}^{h}%
(z)\psi_{\xi}^{h}(z)\chi^{2}(\varepsilon z)dz\\
=  &  c(\xi)\int_{\mathbb{R}^{n}}\nabla\psi_{\xi}^{h}\nabla\psi_{\xi}%
^{h}dz+a(\xi)\int_{\mathbb{R}^{n}}\psi_{\xi}^{h}\psi_{\xi}^{k}dz+o(1)\\
=  &  C\delta_{hk}+o(1),
\end{align*}
as claimed.
\end{proof}

\section{Outline of the proof of Theorem \ref{thm:main}}

\label{sec:proof}Let%
\[
K_{\varepsilon,\xi}:=\text{span}\left\{  Z_{\varepsilon,\xi}^{1}%
,\dots,Z_{\varepsilon,\xi}^{n}\right\}
\]
and
\[
K_{\varepsilon,\xi}^{\bot}:=\left\{  \phi\in H_{\varepsilon}:\left\langle
\phi,Z_{\varepsilon,\xi}^{i}\right\rangle _{\varepsilon}=0,\ i=1,\dots
,n\right\}
\]
be its orthogonal complement in $H_{\varepsilon}.$\ We denote the orthogonal
projections onto these subspaces by
\[
\Pi_{\varepsilon,\xi}:H_{\varepsilon}\rightarrow K_{\varepsilon,\xi
}\text{\qquad and\qquad}\Pi_{\varepsilon,\xi}^{\bot}:H_{\varepsilon
}\rightarrow K_{\varepsilon,\xi}^{\bot}.
\]

Let $i_{\varepsilon}^{\ast}:L_{\varepsilon}^{p^{\prime}}\rightarrow
H_{\varepsilon}$ be the adjoint operator of the Sobolev embedding
$i_{\varepsilon}:H_{\varepsilon}\rightarrow L_{\varepsilon}^{p}$. It is well
known that
\begin{align}
\Vert i_{\varepsilon}^{\ast}(v)\Vert_{\varepsilon}  &  \leq C_{1}%
|v|_{p^{\prime},\varepsilon}\qquad\forall v\in L_{g}^{p^{\prime}%
},\label{eq:istar}\\
|u|_{p,\varepsilon}  &  \leq C_{2}\Vert u\Vert_{\varepsilon}\qquad\forall u\in
H_{\varepsilon}, \label{eq:istella}%
\end{align}
where the constants $C_{1},C_{2}$ do not depend on $\varepsilon$.

We look for a solution to problem (\ref{eq:main}) of the form%
\[
u_{\varepsilon}=W_{\varepsilon,\xi}+\phi\text{\quad with }\phi\in
K_{\varepsilon,\xi}^{\bot}.
\]
This is equivalent to solving the pair of equations
\begin{align}
\Pi_{\varepsilon,\xi}^{\bot}\left\{  W_{\varepsilon,\xi}+\phi-i_{\varepsilon
}^{\ast}\left(  b(x)f(W_{\varepsilon,\xi}+\phi)\right)  \right\}   &
=0,\label{eq:red1}\\
\Pi_{\varepsilon,\xi}\left\{  W_{\varepsilon,\xi}+\phi-i_{\varepsilon}^{\ast
}\left(  b(x)f(W_{\varepsilon,\xi}+\phi)\right)  \right\}   &  =0,
\label{eq:red2}%
\end{align}
where, to simplify notation, we have set $f(u):=(u^{+})^{p-1}$.

Next we state the results needed to prove our main result.

\begin{proposition}
\label{prop:phieps}There exist $\varepsilon_{0}>0$ and $C>0$ such that, for
each $\xi\in M$ and each $\varepsilon\in(0,\varepsilon_{0}),$ there exists a
unique $\phi_{\varepsilon,\xi}\in K_{\varepsilon,\xi}^{\bot}$ which solves
\emph{(\ref{eq:red1})}. It satisfies
\[
\Vert\phi_{\varepsilon,\xi}\Vert_{\varepsilon}<C\varepsilon.
\]
Moreover, $\xi\mapsto\phi_{\varepsilon,\xi}$ is a $\mathcal{C}^{1}$-map.
\end{proposition}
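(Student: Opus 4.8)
The plan is to solve the auxiliary equation \eqref{eq:red1} by a standard contraction-mapping (fixed-point) argument for the Lyapunov--Schmidt reduction. Rewrite \eqref{eq:red1} as a fixed-point problem for $\phi\in K_{\varepsilon,\xi}^{\bot}$: set
\[
\mathcal{T}_{\varepsilon,\xi}(\phi):=\Pi_{\varepsilon,\xi}^{\bot}\left\{ i_{\varepsilon}^{\ast}\!\left(b(x)f(W_{\varepsilon,\xi}+\phi)\right)-W_{\varepsilon,\xi}\right\},
\]
so that \eqref{eq:red1} becomes $\phi=\mathcal{T}_{\varepsilon,\xi}(\phi)$. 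I would actually prefer the more robust route of studying the linearized operator $L_{\varepsilon,\xi}(\phi):=\Pi_{\varepsilon,\xi}^{\bot}\{\phi-i_{\varepsilon}^{\ast}(b(x)f'(W_{\varepsilon,\xi})\phi)\}$ and writing \eqref{eq:red1} as $L_{\varepsilon,\xi}(\phi)=N_{\varepsilon,\xi}(\phi)+R_{\varepsilon,\xi}$, where $R_{\varepsilon,\xi}:=\Pi_{\varepsilon,\xi}^{\bot}\{i_{\varepsilon}^{\ast}(b(x)f(W_{\varepsilon,\xi}))-W_{\varepsilon,\xi}\}$ is the "error term" and $N_{\varepsilon,\xi}(\phi):=\Pi_{\varepsilon,\xi}^{\bot}i_{\varepsilon}^{\ast}(b(x)[f(W_{\varepsilon,\xi}+\phi)-f(W_{\varepsilon,\xi})-f'(W_{\varepsilon,\xi})\phi])$ is quadratic in $\phi$. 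The three ingredients are then: (i) invertibility of $L_{\varepsilon,\xi}$ on $K_{\varepsilon,\xi}^{\bot}$ with $\|L_{\varepsilon,\xi}^{-1}\|$ bounded uniformly in $\varepsilon$ and $\xi$; (ii) an estimate $\|R_{\varepsilon,\xi}\|_{\varepsilon}\le C\varepsilon$; (iii) a Lipschitz estimate for $N_{\varepsilon,\xi}$ on a small ball. Granting these, $\phi\mapsto L_{\varepsilon,\xi}^{-1}(N_{\varepsilon,\xi}(\phi)+R_{\varepsilon,\xi})$ is a contraction on $\{\|\phi\|_{\varepsilon}\le C\varepsilon\}$ for $\varepsilon$ small, giving the unique $\phi_{\varepsilon,\xi}$ with $\|\phi_{\varepsilon,\xi}\|_{\varepsilon}<C\varepsilon$.

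For step (ii), the error estimate, one expands $R_{\varepsilon,\xi}$ by testing against an arbitrary $\psi\in K_{\varepsilon,\xi}^{\bot}$: using \eqref{eq:istar}, $\|R_{\varepsilon,\xi}\|_{\varepsilon}\le C|b(x)f(W_{\varepsilon,\xi})-(\text{the function whose }i_\varepsilon^\ast\text{ is }W_{\varepsilon,\xi})|_{p',\varepsilon}$. Since $W_{\varepsilon,\xi}$ is the bubble $V^{\xi}$ rescaled by $\varepsilon$, cut off by $\chi$, and read through $\exp_{\xi}$, the discrepancy comes from three sources: the curvature of $M$ (the metric coefficients $g^{ij}$, $|g|^{1/2}$ differ from the Euclidean ones, cf.\ \eqref{eq:espg1}--\eqref{eq:espg2}), the variation of the coefficients $a,b,c$ across the $\varepsilon$-neighborhood (Taylor expansion, with $W_{\varepsilon,\xi}$ solving the \emph{frozen} equation \eqref{eq:Vxi-1} at $\xi$), and the exponentially small contribution of the cut-off region $|z|\ge r/(2\varepsilon)$. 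Changing variables $x=\exp_{\xi}(\varepsilon z)$, each of the first two contributes an extra factor $\varepsilon$ (the first-order terms vanish since the bubble is centered and normal coordinates kill first derivatives of the metric), and the cut-off term is $o(\varepsilon^N)$ for all $N$ by the exponential decay of $U$ and its derivatives; this yields $\|R_{\varepsilon,\xi}\|_{\varepsilon}\le C\varepsilon$. Step (iii) is routine: $f(u)=(u^+)^{p-1}$ is $\mathcal{C}^{1}$ with $f'$ Hölder (or Lipschitz, if $p\ge 3$) of the relevant exponent, so $\|N_{\varepsilon,\xi}(\phi_1)-N_{\varepsilon,\xi}(\phi_2)\|_{\varepsilon}\le C(\|\phi_1\|_{\varepsilon}+\|\phi_2\|_{\varepsilon})^{\min(1,p-2)}\|\phi_1-\phi_2\|_{\varepsilon}$, which is a small factor on the ball of radius $C\varepsilon$, using \eqref{eq:istar}, \eqref{eq:istella} and Hölder's inequality with the $\varepsilon$-weighted norms.

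The main obstacle is step (i), the uniform invertibility of $L_{\varepsilon,\xi}$ on $K_{\varepsilon,\xi}^{\bot}$. The natural strategy is a contradiction/compactness argument: suppose there are $\varepsilon_j\to0$, $\xi_j\to\xi_\infty$ and $\phi_j\in K_{\varepsilon_j,\xi_j}^{\bot}$ with $\|\phi_j\|_{\varepsilon_j}=1$ but $\|L_{\varepsilon_j,\xi_j}\phi_j\|_{\varepsilon_j}\to0$. Rescaling $\tilde\phi_j(z):=\phi_j(\exp_{\xi_j}(\varepsilon_j z))$ and passing to the limit (using the metric expansions \eqref{eq:espg1}--\eqref{eq:espg2} and the continuity of $a,b,c$), $\tilde\phi_j$ converges weakly in $H^1(\mathbb{R}^n)$ to some $\tilde\phi_\infty$ solving the limit linearized equation \eqref{eq:lin} at $\xi_\infty$; by the nondegeneracy of the ground state $U$ (the kernel of $-\Delta+1-(p-1)U^{p-2}$ on $H^1(\mathbb{R}^n)$ is exactly $\mathrm{span}\{\partial_i U\}$), $\tilde\phi_\infty$ must lie in that span, but the orthogonality $\langle\phi_j,Z^i_{\varepsilon_j,\xi_j}\rangle_{\varepsilon_j}=0$ forces $\tilde\phi_\infty=0$; one then upgrades this to strong convergence to $0$ (pushing the mass off to infinity is excluded by the decay of $W_{\varepsilon,\xi}$ and a tail estimate on the cut-off region), contradicting $\|\phi_j\|_{\varepsilon_j}=1$. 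Care is needed to handle the curvature terms in the rescaled bilinear form and the transition from the weighted $\|\cdot\|_{\varepsilon}$-geometry to the flat $H^1(\mathbb{R}^n)$-geometry uniformly in $\xi$; the decay estimates of the Appendix are exactly what makes the tail contributions negligible. Finally, the $\mathcal{C}^{1}$-dependence of $\xi\mapsto\phi_{\varepsilon,\xi}$ follows from the implicit function theorem applied to the $\mathcal{C}^{1}$ map $(\xi,\phi)\mapsto\Pi_{\varepsilon,\xi}^{\bot}\{W_{\varepsilon,\xi}+\phi-i_{\varepsilon}^{\ast}(b f(W_{\varepsilon,\xi}+\phi))\}$, whose partial differential in $\phi$ at $\phi_{\varepsilon,\xi}$ is a small perturbation of the invertible $L_{\varepsilon,\xi}$ and hence invertible for $\varepsilon$ small.
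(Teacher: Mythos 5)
Your proposal follows essentially the same route as the paper: the same splitting of (\ref{eq:red1}) into $L_{\varepsilon,\xi}(\phi)=N_{\varepsilon,\xi}(\phi)+R_{\varepsilon,\xi}$, uniform invertibility of $L_{\varepsilon,\xi}$ by the same rescaling--compactness contradiction argument (the paper's Lemma \ref{lem:Linv}, which uses nondegeneracy implicitly through (\ref{eq:scalare}) and (\ref{eq:utilde})), the remainder bound $\Vert R_{\varepsilon,\xi}\Vert_{\varepsilon}\leq C\varepsilon$ in normal coordinates (Lemma \ref{lem:3.3}), a contraction on a ball of radius $C\varepsilon$, and the implicit function theorem for the $\mathcal{C}^{1}$-dependence on $\xi$. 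The only slight inaccuracy is your remark that first-order terms must vanish: for the coefficient $A$ no cancellation occurs or is needed, since $A(\exp_{\xi}(\varepsilon z))-A(\xi)=O(\varepsilon|z|)$ already yields the factor $\varepsilon$ (the paper's term $I_{6}$), so the conclusion is unaffected.
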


\begin{proof}
The proof will be given in Section \ref{sec:finitedim}.
\end{proof}

A solution to problem (\ref{eq:main}) is a critical point of the energy
functional $J_{\varepsilon}:H_{\varepsilon}\rightarrow\mathbb{R}$ given by
\[
J_{\varepsilon}(u)=\frac{1}{2}\Vert u\Vert_{\varepsilon}^{2}-\frac
{1}{p\varepsilon^{n}}\int_{M}b(x)(u^{+})^{p}d\mu_{g}.
\]
Proposition \ref{prop:phieps} allows us to define the reduced energy
functional $\tilde{J}_{\varepsilon}:M\rightarrow\mathbb{R}$ as
\[
\tilde{J}_{\varepsilon}(\xi):=J_{\varepsilon}(W_{\varepsilon,\xi}%
+\phi_{\varepsilon,\xi}).
\]
It has the following property.

\begin{proposition}
\label{lem:tool1}If $\xi_{0}\in M$ is a critical point of $\tilde
{J}_{\varepsilon}$, then the function $W_{\varepsilon,\xi_{0}}+\phi
_{\varepsilon,\xi_{0}}$ is a solution of problem \emph{(\ref{eq:main})}.
\end{proposition}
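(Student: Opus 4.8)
The plan is to exploit the standard variational structure of the Lyapunov--Schmidt scheme: equation (\ref{eq:red1}) is already solved by $\phi_{\varepsilon,\xi}$ via Proposition \ref{prop:phieps}, so the only obstruction to $W_{\varepsilon,\xi}+\phi_{\varepsilon,\xi}$ being a genuine critical point of $J_{\varepsilon}$ is the finite-dimensional equation (\ref{eq:red2}), i.e. the vanishing of $\Pi_{\varepsilon,\xi}\{W_{\varepsilon,\xi}+\phi_{\varepsilon,\xi}-i_{\varepsilon}^{\ast}(b(x)f(W_{\varepsilon,\xi}+\phi_{\varepsilon,\xi}))\}$. I would first recast this: note that $J_{\varepsilon}'(u)[\varphi] = \langle u - i_{\varepsilon}^{\ast}(b(x)f(u)),\varphi\rangle_{\varepsilon}$ for all $\varphi\in H_{\varepsilon}$, because $i_{\varepsilon}^{\ast}$ is the adjoint of the Sobolev embedding with respect to $\langle\cdot,\cdot\rangle_{\varepsilon}$. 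Hence $J_{\varepsilon}'(W_{\varepsilon,\xi}+\phi_{\varepsilon,\xi})$ annihilates all of $K_{\varepsilon,\xi}^{\bot}$ automatically (that is exactly (\ref{eq:red1})), and it annihilates $K_{\varepsilon,\xi}$ precisely when (\ref{eq:red2}) holds.

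The key step is the chain-rule computation for $\tilde J_{\varepsilon}$. Since $\xi\mapsto\phi_{\varepsilon,\xi}$ is $\mathcal{C}^{1}$ (Proposition \ref{prop:phieps}) and $\xi\mapsto W_{\varepsilon,\xi}$ is smooth, $\tilde J_{\varepsilon}\in\mathcal{C}^{1}(M,\R)$, and for a tangent direction at $\xi_0$, differentiating $\tilde J_{\varepsilon}(\xi)=J_{\varepsilon}(W_{\varepsilon,\xi}+\phi_{\varepsilon,\xi})$ gives
\[
d\tilde J_{\varepsilon}(\xi_0)[\,\cdot\,]
= J_{\varepsilon}'(W_{\varepsilon,\xi_0}+\phi_{\varepsilon,\xi_0})
\Bigl[\,\partial_{\xi}W_{\varepsilon,\xi_0}[\,\cdot\,]+\partial_{\xi}\phi_{\varepsilon,\xi_0}[\,\cdot\,]\Bigr].
\]
Because $J_{\varepsilon}'(W_{\varepsilon,\xi_0}+\phi_{\varepsilon,\xi_0})$ already kills $K_{\varepsilon,\xi_0}^{\bot}$, only the components of $\partial_{\xi}W_{\varepsilon,\xi_0}$ and $\partial_{\xi}\phi_{\varepsilon,\xi_0}$ inside $K_{\varepsilon,\xi_0}$ contribute. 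Differentiating the constraint $\langle\phi_{\varepsilon,\xi},Z_{\varepsilon,\xi}^{i}\rangle_{\varepsilon}=0$ in $\xi$ shows that the $K_{\varepsilon,\xi_0}$-component of $\partial_{\xi}\phi_{\varepsilon,\xi_0}$ is controlled by $\langle\phi_{\varepsilon,\xi_0},\partial_{\xi}Z_{\varepsilon,\xi_0}^{i}\rangle_{\varepsilon}$, which is $O(\|\phi_{\varepsilon,\xi_0}\|_{\varepsilon}/\varepsilon)\cdot\|Z_{\varepsilon,\xi_0}^i\|_\varepsilon$; the delicate bookkeeping here is the point where one must be careful. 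On the other hand, $\partial_{\xi}W_{\varepsilon,\xi_0}$ is, to leading order, a combination of the $Z_{\varepsilon,\xi_0}^{i}$ (this is visible from the formula (\ref{eq:derWeps}) together with the Taylor expansion (\ref{eq:espE})), and Proposition \ref{prop:Zi} shows the $Z_{\varepsilon,\xi_0}^{i}$ are asymptotically orthonormal up to the constant $C$, hence linearly independent for $\varepsilon$ small. Combining these facts, one gets that $d\tilde J_{\varepsilon}(\xi_0)=0$ forces the coefficients $c^i$ in the expansion $\Pi_{\varepsilon,\xi_0}\{W_{\varepsilon,\xi_0}+\phi_{\varepsilon,\xi_0}-i_{\varepsilon}^{\ast}(b(x)f(\cdots))\}=\sum_i c^i Z_{\varepsilon,\xi_0}^i$ to satisfy a linear system whose matrix is invertible for small $\varepsilon$ (again by Proposition \ref{prop:Zi} and the smallness estimate $\|\phi_{\varepsilon,\xi_0}\|_\varepsilon<C\varepsilon$), so all $c^i=0$ and (\ref{eq:red2}) holds.

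Once (\ref{eq:red2}) is established, together with (\ref{eq:red1}) it gives $J_{\varepsilon}'(W_{\varepsilon,\xi_0}+\phi_{\varepsilon,\xi_0})=0$ on all of $H_{\varepsilon}=K_{\varepsilon,\xi_0}\oplus K_{\varepsilon,\xi_0}^{\bot}$, i.e. $W_{\varepsilon,\xi_0}+\phi_{\varepsilon,\xi_0}$ is a critical point of $J_{\varepsilon}$, hence a weak solution of $-\varepsilon^2\mathrm{div}_g(c\nabla_g u)+au=b(u^+)^{p-1}$; a maximum-principle argument (testing against $u^-$, using $f(u)=(u^+)^{p-1}$) then shows $u_\varepsilon:=W_{\varepsilon,\xi_0}+\phi_{\varepsilon,\xi_0}>0$, so it solves (\ref{eq:main}). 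The main obstacle is the second step — making rigorous the claim that the nonlinear finite-dimensional equation (\ref{eq:red2}) is equivalent, for $\varepsilon$ small, to the criticality of the reduced functional; this requires the invertibility of the relevant $n\times n$ matrix and the $O(\varepsilon)$ control on $\partial_\xi\phi_{\varepsilon,\xi}$ along $K_{\varepsilon,\xi}$, which in turn leans on Propositions \ref{prop:Zi} and \ref{prop:phieps} and on the expansion (\ref{eq:derWeps}).
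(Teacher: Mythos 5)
Your proposal follows essentially the same route as the paper's proof: differentiate $\tilde J_{\varepsilon}$ in normal coordinates at $\xi_0$, use (\ref{eq:red1}) to write the derivative as $\sum_{l}C_{\varepsilon}^{l}\left\langle Z_{\varepsilon,\xi}^{l},\partial_{y_h}\bigl(W_{\varepsilon,\xi}+\phi_{\varepsilon,\xi}\bigr)\right\rangle_{\varepsilon}$, and conclude $C_{\varepsilon}^{l}=0$ because the matrix of the resulting linear system is asymptotically diagonal and nondegenerate, with $\left\langle Z_{\varepsilon,\xi_0}^{l},\partial_{y_h}W_{\varepsilon,\xi(y)}\right\rangle_{\varepsilon}=-\tfrac{C}{\varepsilon}\delta_{hl}+o(1/\varepsilon)$ (Lemma \ref{lem:6.2}) dominating the $O(1)$ contribution of $\partial_{y_h}\phi_{\varepsilon,\xi(y)}$ coming from Lemma \ref{lem:6.1} and Proposition \ref{prop:phieps}. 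The only point to make fully explicit is precisely this $1/\varepsilon$ versus $O(1)$ comparison (which is what makes your matrix invertible), and your closing maximum-principle remark on positivity is a standard addition the paper leaves implicit.
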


\begin{proof}
Set $\xi=\xi(y)=\exp_{\xi_{0}}(y)$. If $\xi_{0}$ is a critical point for
$\tilde{J}_{\varepsilon}$ we have
\[
\left.  \frac{\partial}{\partial y_{h}}\tilde{J}_{\varepsilon}(\exp_{\xi_{0}%
}(y))\right\vert _{y=0}=0\text{\qquad for all }h=1,\dots,n.
\]
Since $\phi_{\varepsilon,\xi(y)}$ solves (\ref{eq:red1}), we get
\begin{multline*}
\frac{\partial}{\partial y_{h}}\tilde{J}_{\varepsilon}(\exp_{\xi_{0}%
}(y))=J_{\varepsilon}^{\prime}(W_{\varepsilon,\xi(y)}+\phi_{\varepsilon
,\xi(y)})\left[  \frac{\partial}{\partial y_{h}}\left(  W_{\varepsilon,\xi
(y)}+\phi_{\varepsilon,\xi(y)}\right)  \right]  \\
=\left\langle W_{\varepsilon,\xi(y)}+\phi_{\varepsilon,\xi(y)}-i_{\varepsilon
}^{\ast}\left(  b(x)f(W_{\varepsilon,\xi(y)}+\phi_{\varepsilon,\xi
(y)})\right)  ,\frac{\partial}{\partial y_{h}}\left(  W_{\varepsilon,\xi
(y)}+\phi_{\varepsilon,\xi(y)}\right)  \right\rangle _{\varepsilon}\\
=\sum_{l=1}^{n}C_{\varepsilon}^{l}\left\langle Z_{\varepsilon,\xi(y)}%
,\frac{\partial}{\partial y_{h}}W_{\varepsilon,\xi(y)}+\frac{\partial
}{\partial y_{h}}\phi_{\varepsilon,\xi(y)}\right\rangle _{\varepsilon}%
\end{multline*}
for some $C_{\varepsilon}^{1},\dots,C_{\varepsilon}^{n}\in\mathbb{R}$. We want
to prove that, for $y=0$, $C_{\varepsilon}^{1}=\dots=C_{\varepsilon}^{n}=0$.
Since $\phi_{\varepsilon,\xi(y)}\in K_{\varepsilon,\xi(y)}^{\bot}$ we have%
\begin{align*}
\left\langle Z_{\varepsilon,\xi(y)},\left.  \frac{\partial}{\partial y_{h}%
}\phi_{\varepsilon,\xi(y)}\right\vert _{y=0}\right\rangle _{\varepsilon}= &
-\left\langle \left.  \frac{\partial}{\partial y_{h}}Z_{\varepsilon,\xi
(y)}\right\vert _{y=0},\phi_{\varepsilon,\xi(y)}\right\rangle _{\varepsilon}\\
= &  O\left(  \left\Vert \left.  \frac{\partial}{\partial y_{h}}%
Z_{\varepsilon,\xi(y)}\right\vert _{y=0}\right\Vert _{\varepsilon}%
\cdot\left\Vert \phi_{\varepsilon,\xi(y)}\right\Vert _{\varepsilon}\right)
=O(1)
\end{align*}
by Lemma \ref{lem:6.1} and Proposition \ref{prop:phieps}. Moreover, by Lemma
\ref{lem:6.2} we have
\[
\left\langle Z_{\varepsilon,\xi_{0}}^{l},\left.  \frac{\partial}{\partial
y_{h}}W_{\varepsilon,\xi(y)}\right\vert _{y=0}\right\rangle _{\varepsilon
}=-\frac{1}{\varepsilon}C\delta_{hl}+o\left(  \frac{1}{\varepsilon}\right)  ,
\]
thus
\[
0=\sum_{l=1}^{n}C_{\varepsilon}^{l}\left\langle Z_{\varepsilon,\xi(y)},\left.
\frac{\partial}{\partial y_{h}}\phi_{\varepsilon,\xi(y)}\right\vert
_{y=0}+\left.  \frac{\partial}{\partial y_{h}}W_{\varepsilon,\xi
(y)}\right\vert _{y=0}\right\rangle _{\varepsilon}=-\frac{C}{\varepsilon}%
\sum_{l=1}^{n}C_{\varepsilon}^{l}\left(  \delta_{hl}+o\left(  1\right)
\right)
\]
and this implies that $C_{\varepsilon}^{l}=0$ for all $l=1,\dots,n$.
\end{proof}

\begin{proposition}
\label{lem:tool2}The reduced energy is given by
\[
\tilde{J}_{\varepsilon}(\xi)=\left(  \frac{p-2}{2p}\int_{\mathbb{R}^{n}}%
U^{p}dz\right)  \frac{c(\xi)^{\frac{n}{2}}a(\xi)^{\frac{p}{p-2}-\frac{n}{2}}%
}{b(\xi)^{\frac{2}{p-2}}}+O(\varepsilon),
\]
$\mathcal{C}^{1}$-uniformly with respect to $\xi$ as $\varepsilon\rightarrow0$.
\end{proposition}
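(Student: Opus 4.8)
The plan is to show first that $\tilde{J}_{\varepsilon}(\xi)$ differs from the energy $J_{\varepsilon}(W_{\varepsilon,\xi})$ of the pure ansatz by a negligible amount, then to evaluate $J_{\varepsilon}(W_{\varepsilon,\xi})$ explicitly by passing to normal coordinates at $\xi$ and rescaling, and finally to rewrite the outcome through the profile $U$ of \eqref{eq:U}. For the first step, recall that $J_{\varepsilon}'(v)[\,\cdot\,]=\langle v-i_{\varepsilon}^{\ast}(b\,f(v)),\cdot\rangle_{\varepsilon}$, so a second order Taylor expansion of $J_{\varepsilon}$ at $W_{\varepsilon,\xi}$ gives
\[
\big|\tilde{J}_{\varepsilon}(\xi)-J_{\varepsilon}(W_{\varepsilon,\xi})\big|\ \leq\ \big\|W_{\varepsilon,\xi}-i_{\varepsilon}^{\ast}(b\,f(W_{\varepsilon,\xi}))\big\|_{\varepsilon}\,\|\phi_{\varepsilon,\xi}\|_{\varepsilon}+C\,\|\phi_{\varepsilon,\xi}\|_{\varepsilon}^{2}.
\]
The first factor on the right is the error of the ansatz; it is $O(\varepsilon)$ (this comes out of the finite dimensional reduction carried out in Section \ref{sec:finitedim}), while $\|\phi_{\varepsilon,\xi}\|_{\varepsilon}<C\varepsilon$ by Proposition \ref{prop:phieps}. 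Hence $\tilde{J}_{\varepsilon}(\xi)=J_{\varepsilon}(W_{\varepsilon,\xi})+O(\varepsilon^{2})$, uniformly in $\xi$.

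For the second step, pass to normal coordinates $x=\exp_{\xi}(\varepsilon y)$. Since $V^{\xi}$ and its derivatives decay exponentially, the cut-off $\chi$ equals $1$ on the relevant region up to an $O(\mathrm{e}^{-c/\varepsilon})$ error, and $J_{\varepsilon}(W_{\varepsilon,\xi})$ reduces to integrals over $\mathbb{R}^{n}$ of $|\nabla V^{\xi}|^{2}$, $(V^{\xi})^{2}$, $(V^{\xi})^{p}$ weighted by $a,b,c$ evaluated at $\exp_{\xi}(\varepsilon y)$ and by the metric coefficients $g^{ij}(\varepsilon y)$, $|g(\varepsilon y)|^{1/2}$. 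Inserting the Taylor expansions \eqref{eq:espg1}, \eqref{eq:espg2} and the Taylor expansions of $a,b,c$ at $\xi$, the zeroth order term is $\tfrac12\big(c(\xi)\int_{\mathbb{R}^{n}}|\nabla V^{\xi}|^{2}+a(\xi)\int_{\mathbb{R}^{n}}(V^{\xi})^{2}\big)-\tfrac1p\,b(\xi)\int_{\mathbb{R}^{n}}(V^{\xi})^{p}$, the first order terms contain an odd factor $y_{i}$ against the radial functions $|\nabla V^{\xi}|^{2},(V^{\xi})^{2},(V^{\xi})^{p}$ and hence vanish by symmetry, and the rest is $O(\varepsilon^{2})$. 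Testing \eqref{eq:Vxi-1} with $V^{\xi}$ gives $c(\xi)\int_{\mathbb{R}^{n}}|\nabla V^{\xi}|^{2}+a(\xi)\int_{\mathbb{R}^{n}}(V^{\xi})^{2}=b(\xi)\int_{\mathbb{R}^{n}}(V^{\xi})^{p}$, so the zeroth order term collapses to $\tfrac{p-2}{2p}\,b(\xi)\int_{\mathbb{R}^{n}}(V^{\xi})^{p}$. By the rescaling \eqref{eq:rescaling} one has $\int_{\mathbb{R}^{n}}(V^{\xi})^{p}=\gamma(\xi)^{p}A(\xi)^{-n/2}\int_{\mathbb{R}^{n}}U^{p}$, and using $\gamma(\xi)=(a(\xi)/b(\xi))^{1/(p-2)}$, $A(\xi)=a(\xi)/c(\xi)$ and $\tfrac{2}{p-2}=\tfrac{p}{p-2}-1$ a direct computation shows $b(\xi)\gamma(\xi)^{p}A(\xi)^{-n/2}=\Gamma(\xi)$. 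Together with Step 1 this gives the $\mathcal{C}^{0}$ expansion, in fact with remainder $O(\varepsilon^{2})$.

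The third step, the $\mathcal{C}^{1}$ uniformity, is the main obstacle, because the $\xi$-derivative of the ansatz scales like $\varepsilon^{-1}$: from \eqref{eq:derWeps} and \eqref{eq:espE}, together with the $\mathcal{C}^{1}$-bounds on $\phi_{\varepsilon,\xi}$ of Proposition \ref{prop:phieps}, one has $\|\partial_{y_{h}}(W_{\varepsilon,\xi(y)}+\phi_{\varepsilon,\xi(y)})\|_{\varepsilon}=O(\varepsilon^{-1})$. Writing $\xi=\xi(y)=\exp_{\xi_{0}}(y)$ and arguing as in the proof of Proposition \ref{lem:tool1}, $(W_{\varepsilon,\xi(y)}+\phi_{\varepsilon,\xi(y)})-i_{\varepsilon}^{\ast}(b\,f(W_{\varepsilon,\xi(y)}+\phi_{\varepsilon,\xi(y)}))=\sum_{l}C_{\varepsilon}^{l}Z_{\varepsilon,\xi(y)}^{l}$, and by Proposition \ref{prop:Zi} and the linear system determining the $C_{\varepsilon}^{l}$ one gets $C_{\varepsilon}^{l}=O(\varepsilon^{2})$ — this extra power of $\varepsilon$ is again a consequence of the radial symmetry of $V^{\xi}$, which forces the (essentially even) error $W_{\varepsilon,\xi}-i_{\varepsilon}^{\ast}(b\,f(W_{\varepsilon,\xi}))$ to be almost orthogonal to the (essentially odd) functions $Z_{\varepsilon,\xi}^{l}$, and is established in Section \ref{sec:finitedim}. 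Therefore
\[
\partial_{y_{h}}\tilde{J}_{\varepsilon}(\exp_{\xi_{0}}(y))=\sum_{l=1}^{n}C_{\varepsilon}^{l}\,\big\langle Z_{\varepsilon,\xi(y)}^{l},\ \partial_{y_{h}}(W_{\varepsilon,\xi(y)}+\phi_{\varepsilon,\xi(y)})\big\rangle_{\varepsilon}=O(\varepsilon),
\]
and combining with the $\xi$-derivative of the main term $\tfrac{p-2}{2p}\big(\int_{\mathbb{R}^{n}}U^{p}\big)\Gamma(\xi)$ — which is controlled since $\Gamma$ is built from the $\mathcal{C}^{2}$ functions $a,b,c$ — yields the claimed expansion $\mathcal{C}^{1}$-uniformly in $\xi$. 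The delicate points are thus the sharp bound $C_{\varepsilon}^{l}=O(\varepsilon^{2})$ on the reduced gradient and the systematic vanishing of odd-order terms, which together absorb the unfavorable $\varepsilon^{-1}$ scaling of $\partial_{\xi}W_{\varepsilon,\xi}$.
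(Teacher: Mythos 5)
Your Steps 1 and 2 (the $\mathcal{C}^{0}$ part) are fine and essentially reproduce the paper's Step 1 of Lemma \ref{lem:redc0} together with Steps 1--2 of Lemma \ref{lem:expJeps}. The genuine gap is in your Step 3, which is precisely the part the paper spends its Appendix on. The identity $\partial_{y_{h}}\tilde{J}_{\varepsilon}=\sum_{l}C_{\varepsilon}^{l}\langle Z_{\varepsilon,\xi(y)}^{l},\partial_{y_{h}}(W_{\varepsilon,\xi(y)}+\phi_{\varepsilon,\xi(y)})\rangle_{\varepsilon}$ is correct, but your claimed bound $C_{\varepsilon}^{l}=O(\varepsilon^{2})$ is false in general, and nothing of the sort is proved in Section \ref{sec:finitedim} (the paper only proves $\sum_{l}|C_{\varepsilon}^{l}|=O(\varepsilon)$, equation (\ref{eq:Ceps}), and that in the Appendix). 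The parity heuristic is backwards: in normal coordinates the leading, $O(\varepsilon)$, part of the error $W_{\varepsilon,\xi}-i_{\varepsilon}^{\ast}(b\,f(W_{\varepsilon,\xi}))$ comes from terms like $[a(\exp_{\xi}(\varepsilon z))-a(\xi)]V^{\xi}\approx\varepsilon\,\nabla a(\xi)\cdot z\,V^{\xi}(z)$, which is \emph{odd}, not even; paired with the odd functions $\psi_{\xi}^{h}=\partial_{h}V^{\xi}$ it produces a nonzero $O(\varepsilon)$ contribution --- indeed this is exactly the contribution that carries $\partial_{y_{h}}\Gamma$. Since $\langle Z^{l},\partial_{y_{h}}W\rangle_{\varepsilon}\approx-\tfrac{C}{\varepsilon}\delta_{hl}$, one has in fact $C_{\varepsilon}^{h}\approx-\tfrac{\varepsilon}{C}\,\partial_{y_{h}}\tilde{J}_{\varepsilon}$, so $C_{\varepsilon}^{h}$ is genuinely of order $\varepsilon$ whenever $\nabla\Gamma(\xi)\neq0$. (Also, the term $\varepsilon^{-n}\int b\,(f(W+\phi)-f(W))Z^{h}$ is only $O(\varepsilon)$, not $O(\varepsilon^{2})$, given $\|\phi_{\varepsilon,\xi}\|_{\varepsilon}=O(\varepsilon)$.)

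Moreover, even granting your estimate, the argument would be self-defeating: the conclusion $\partial_{y_{h}}\tilde{J}_{\varepsilon}=O(\varepsilon)$ uniformly, combined with the statement you are proving, would force $\nabla_{g}\Gamma\equiv0$, i.e.\ it can only hold when $\Gamma$ is constant; and the final phrase ``combining with the $\xi$-derivative of the main term'' has no mechanism behind it --- the derivative of the main term must emerge from the expansion of $\partial_{y_{h}}\tilde{J}_{\varepsilon}$ itself, it cannot be added afterwards. What is actually needed (and what the paper does) is twofold: first, $\partial_{y_{h}}\big(J_{\varepsilon}(W_{\varepsilon,\xi(y)}+\phi_{\varepsilon,\xi(y)})-J_{\varepsilon}(W_{\varepsilon,\xi(y)})\big)=O(\varepsilon)$, proved by splitting $\big(J_{\varepsilon}'(W+\phi)-J_{\varepsilon}'(W)\big)[\partial_{y_{h}}W]$ using the cancellation $\big\|\partial_{y_{h}}W_{\varepsilon,\xi(y)}+\tfrac{1}{\varepsilon}Z_{\varepsilon,\xi(y)}^{h}\big\|_{\varepsilon}\leq C\varepsilon$ (Lemma \ref{lem:6.3}) together with $\|Z^{h}-i_{\varepsilon}^{\ast}[b f'(W)Z^{h}]\|_{\varepsilon}\leq C\varepsilon$ (Lemma \ref{lem:5.2}) to absorb the $\varepsilon^{-1}$ scaling; second, the explicit computation showing $\partial_{y_{h}}J_{\varepsilon}(W_{\varepsilon,\xi(y)})$ equals the $y_{h}$-derivative of the limit functional up to $O(\varepsilon)$ (Step 3 of Lemma \ref{lem:expJeps}). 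Neither ingredient is supplied or replaced by your Step 3, so the $\mathcal{C}^{1}$ statement remains unproved in your proposal.
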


\begin{proof}
The proof follows from two lemmas which we prove in Section \ref{sec:redfunc}%
:\ Lemma \ref{lem:expJeps} asserts that $\tilde{J}_{\varepsilon}%
(\xi)=J_{\varepsilon}(W_{\varepsilon,\xi})+O(\varepsilon)$ as $\varepsilon
\rightarrow0,$ $\mathcal{C}^{1}$-uniformly with respect to $\xi$. Then, in
Lemma \ref{lem:redc0}, we obtain the expansion of $J_{\varepsilon
}(W_{\varepsilon,\xi})$ which yields the claim.
\end{proof}

Using the previous propositions we prove our main result.\medskip

\begin{proof}
[Proof of Theorem \ref{thm:main}.]If $K$ is a $\mathcal{C}^{1}$-stable
critical set for $\Gamma(\xi)=\frac{c(\xi)^{\frac{n}{2}}a(\xi)^{\frac{p}%
{p-2}-\frac{n}{2}}}{b(\xi)^{\frac{2}{p-2}}}$ then, by Definition
\ref{def:stable} and Proposition \ref{lem:tool2}, $\tilde{J}_{\varepsilon}$
has a critical point $\xi_{\varepsilon}\in M$ such that $d_{g}(\xi
_{\varepsilon},K)\rightarrow0$ as $\varepsilon\rightarrow0$. Proposition
\ref{lem:tool1} asserts that $u_{\varepsilon}:=W_{\varepsilon,\xi
_{\varepsilon}}+\Phi_{\varepsilon,\xi_{\varepsilon}}$ is a solution of
(\ref{eq:main}).
\end{proof}

\section{The finite dimensional reduction}

\label{sec:finitedim}In this section we solve equation (\ref{eq:red1}). We
introduce the linear operator
\begin{align*}
L_{\varepsilon,\xi}  &  :K_{\varepsilon,\xi}^{\bot}\rightarrow K_{\varepsilon
,\xi}^{\bot}\\
L_{\varepsilon,\xi}(\phi)  &  :=\Pi_{\varepsilon,\xi}^{\bot}\left\{
\phi-i_{\varepsilon}^{\ast}\left[  b(\cdot)f^{\prime}(W_{\varepsilon,\xi}%
)\phi\right]  \right\}  .
\end{align*}
Equation (\ref{eq:red1}) can be rewritten as
\[
L_{\varepsilon,\xi}(\phi)=N_{\varepsilon,\xi}(\phi)+R_{\varepsilon,\xi},
\]
where $N_{\varepsilon,\xi}$ is the nonlinear term
\[
N_{\varepsilon,\xi}(\phi):=\Pi_{\varepsilon,\xi}^{\bot}\left\{  i_{\varepsilon
}^{\ast}\left[  b(\cdot)\left(  f(W_{\varepsilon,\xi}+\phi)-f(W_{\varepsilon
,\xi})-f^{\prime}(W_{\varepsilon,\xi})\right)  \phi\right]  \right\}
\]
and $R_{\varepsilon,\xi}$ is the remainder
\[
R_{\varepsilon,\xi}:=\Pi_{\varepsilon,\xi}^{\bot}\left\{  i_{\varepsilon
}^{\ast}\left[  b(\cdot)f(W_{\varepsilon,\xi})\right]  -W_{\varepsilon,\xi
}\right\}  .
\]

\begin{lemma}
\label{lem:Linv}There exist $\varepsilon_{0}$ and $C>0$ such that, for every
$\xi\in M$ and $\varepsilon\in(0,\varepsilon_{0}),$
\[
\Vert L_{\varepsilon,\xi}(\phi)\Vert_{\varepsilon}\geq C\Vert\phi
\Vert_{\varepsilon}\text{\qquad for all \ }\phi\in K_{\varepsilon,\xi}^{\bot
}.
\]

\end{lemma}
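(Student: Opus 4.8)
The plan is to prove the invertibility estimate for $L_{\varepsilon,\xi}$ by the standard contradiction-and-blow-up argument used in Lyapunov-Schmidt reductions. Suppose the estimate fails: then there are sequences $\varepsilon_j\to 0$, $\xi_j\in M$, and $\phi_j\in K_{\varepsilon_j,\xi_j}^{\bot}$ with $\|\phi_j\|_{\varepsilon_j}=1$ but $\|L_{\varepsilon_j,\xi_j}(\phi_j)\|_{\varepsilon_j}\to 0$. Writing $\zeta_j:=L_{\varepsilon_j,\xi_j}(\phi_j)\in K_{\varepsilon_j,\xi_j}^{\bot}$, the definition of $L_{\varepsilon,\xi}$ gives
\[
\phi_j-i_{\varepsilon_j}^{\ast}\bigl[b(\cdot)f'(W_{\varepsilon_j,\xi_j})\phi_j\bigr]=\zeta_j+\sum_{i=1}^n c_j^i\, Z_{\varepsilon_j,\xi_j}^i
\]
for suitable scalars $c_j^i$ (the $K_{\varepsilon,\xi}$-component absorbed by the projection). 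First I would test this identity against $Z_{\varepsilon_j,\xi_j}^k$ and use Proposition \ref{prop:Zi} together with estimates on $\langle i_{\varepsilon}^{\ast}[b f'(W_{\varepsilon,\xi})\phi],Z_{\varepsilon,\xi}^k\rangle_{\varepsilon}$ to show that the $c_j^i$ are $o(1)$; hence it suffices to analyze $\phi_j-i_{\varepsilon_j}^{\ast}[b(\cdot)f'(W_{\varepsilon_j,\xi_j})\phi_j]=o(1)$ in $H_{\varepsilon_j}$.

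Next I would pass to the rescaled picture. Using normal coordinates at $\xi_j$ and the change of variables $x=\exp_{\xi_j}(\varepsilon_j z)$, set $\tilde\phi_j(z):=\phi_j(\exp_{\xi_j}(\varepsilon_j z))$ on $B(0,r/\varepsilon_j)$, extended by $0$. The weighted norm $\|\cdot\|_{\varepsilon}$ is precisely designed so that $\|\phi_j\|_{\varepsilon_j}$ is comparable to the $H^1(\mathbb{R}^n)$-norm of $\tilde\phi_j$ (with weights $a(\xi_j),c(\xi_j)$ frozen, up to $o(1)$ coming from the Taylor expansions \eqref{eq:espg1}, \eqref{eq:espg2} of the metric and the continuity of $a,c$). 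By the $\|\phi_j\|_{\varepsilon_j}=1$ bound, up to a subsequence $\tilde\phi_j\rightharpoonup\phi_\infty$ weakly in $H^1(\mathbb{R}^n)$, $\xi_j\to\xi_\infty\in M$, $A(\xi_j)\to A(\xi_\infty)$, $B(\xi_j)\to B(\xi_\infty)$. Rewriting the equation $\phi_j=i_{\varepsilon_j}^{\ast}[b f'(W_{\varepsilon_j,\xi_j})\phi_j]+o(1)$ weakly, i.e. testing against $\varphi\circ(\exp_{\xi_j}\circ\,\varepsilon_j\cdot)^{-1}$ for $\varphi\in C_c^\infty(\mathbb{R}^n)$ and letting $j\to\infty$ using $W_{\varepsilon_j,\xi_j}$ rescaled $\to V^{\xi_\infty}$ (from \eqref{eq:rescaling}) and the fact that the cutoff $\chi$ is eventually $\equiv 1$ on compact sets, one finds that $\phi_\infty$ solves the linearized equation
\[
-\Delta\phi_\infty+A(\xi_\infty)\phi_\infty=(p-1)B(\xi_\infty)\bigl(V^{\xi_\infty}\bigr)^{p-2}\phi_\infty\quad\text{in }\mathbb{R}^n.
\]
By the nondegeneracy of the ground state $U$ (equivalently $V^{\xi_\infty}$), the kernel of this linearized operator is spanned by $\partial_1 V^{\xi_\infty},\dots,\partial_n V^{\xi_\infty}$, i.e. by the $\psi_{\xi_\infty}^i$. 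On the other hand the orthogonality conditions $\langle\phi_j,Z_{\varepsilon_j,\xi_j}^i\rangle_{\varepsilon_j}=0$ pass to the limit (again via rescaling and Proposition \ref{prop:Zi}-type computations) to give $\int_{\mathbb{R}^n}\bigl(\nabla\phi_\infty\nabla\psi_{\xi_\infty}^i+A(\xi_\infty)\phi_\infty\psi_{\xi_\infty}^i\bigr)\,dz=0$ for all $i$, hence $\phi_\infty$ is orthogonal to the kernel and therefore $\phi_\infty=0$.

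It then remains to upgrade $\tilde\phi_j\rightharpoonup 0$ to a contradiction with $\|\phi_j\|_{\varepsilon_j}=1$. Here I would split $\mathbb{R}^n$ (in the rescaled variable) into a large ball $B(0,R)$ and its complement. On $B(0,R)$ the embedding $H^1\hookrightarrow L^p$ is compact, so $\tilde\phi_j\to 0$ strongly in $L^p(B(0,R))$; outside $B(0,R)$ the term $f'(W_{\varepsilon_j,\xi_j})=(p-1)W_{\varepsilon_j,\xi_j}^{p-2}$ rescaled is uniformly small (since $V^{\xi_\infty}$ decays exponentially) so that $\bigl|i_{\varepsilon_j}^{\ast}[b f'(W_{\varepsilon_j,\xi_j})\phi_j]\bigr|$ contributes $o(1)+\text{(small)}\cdot\|\phi_j\|_{\varepsilon_j}$ to the norm via \eqref{eq:istar}. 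Combining, $\|\phi_j\|_{\varepsilon_j}^2=\langle\phi_j,i_{\varepsilon_j}^{\ast}[b f'(W_{\varepsilon_j,\xi_j})\phi_j]\rangle_{\varepsilon_j}+o(1)=o(1)$, contradicting $\|\phi_j\|_{\varepsilon_j}=1$. The main obstacle is making the rescaling/localization bookkeeping fully rigorous: one must control the error terms coming from the variable coefficients $a,b,c$ and from the metric $g$ being only approximately Euclidean in normal coordinates (the $o(\varepsilon)$ terms in \eqref{eq:espg1}–\eqref{eq:espg2}), and one must handle the tails carefully so that the weak limit in the whole space $\mathbb{R}^n$, rather than on each fixed ball, captures the full norm. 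Everything else — the nondegeneracy input and the passage to the limit in the equation and in the orthogonality constraints — is standard once the functional-analytic setup is in place.
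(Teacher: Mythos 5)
Your proposal is correct and follows essentially the same route as the paper's proof: a contradiction argument in which one first kills the $K_{\varepsilon,\xi}$-component by testing against the $Z^h_{\varepsilon,\xi}$, then rescales, passes to the weak limit in the linearized equation and in the orthogonality constraints, concludes the limit vanishes (the paper leaves the nondegeneracy of $U$ implicit where you state it explicitly), and finally contradicts $\Vert\phi_j\Vert_\varepsilon=1$ through the identity $\Vert\phi_j\Vert_\varepsilon^2=\frac{1}{\varepsilon^n}\int b\,f'(W_{\varepsilon,\xi})\phi_j^2+o(1)$ combined with local compactness and the exponential decay of $V^\xi$. The remaining work is exactly the bookkeeping you identify, which the paper carries out with the Taylor expansions of the metric and coefficients in normal coordinates.
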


\begin{proof}
Arguing by contradiction, assume there exist sequences $\varepsilon
_{k}\rightarrow0$, $\xi_{k}\in M$ with $\xi_{k}\rightarrow\xi\in M,$ and
$\phi_{k}\in K_{\varepsilon_{k},\xi_{k}}^{\bot}$ with $\Vert\phi_{k}%
\Vert_{\varepsilon_{k}}=1,$ such that
\[
L_{\varepsilon_{k},\xi_{k}}(\phi_{k})=:\psi_{k}\text{\quad satisfies\quad
}\Vert\psi_{k}\Vert_{\varepsilon_{k}}\rightarrow0\text{ as }k\rightarrow
+\infty.
\]
Let $\zeta_{k}\in K_{\varepsilon_{k},\xi_{k}}$ be such that
\begin{equation}
\phi_{k}-i_{\varepsilon_{k}}^{\ast}\left[  b(\cdot)f^{\prime}(W_{\varepsilon
_{k},\xi_{k}})\phi_{k}\right]  =\psi_{k}+\zeta_{k}. \label{eq:zetagreca}%
\end{equation}

Next, we prove that $\Vert\zeta_{k}\Vert_{\varepsilon_{k}}\rightarrow0$ as
$k\rightarrow+\infty$. Let ${\zeta_{k}=%
{\textstyle\sum_{j=1}^{n}}
}${$\alpha$}${_{j}^{k}Z_{\varepsilon_{k},\xi_{k}}^{j}}$. Multiplying
(\ref{eq:zetagreca}) by $Z_{\varepsilon_{k},\xi_{k}}^{h}$ and noting that
$\phi_{k},\psi_{k}\in K_{\varepsilon_{k},\xi_{k}}^{\bot}$ we obtain
\begin{align}
\sum_{j=1}^{n}\alpha_{j}^{k}\left\langle Z_{\varepsilon_{k},\xi_{k}}%
^{j},Z_{\varepsilon_{k},\xi_{k}}^{h}\right\rangle _{\varepsilon_{k}}  &
=-\left\langle i_{\varepsilon_{k}}^{\ast}\left[  b(\cdot)f^{\prime
}(W_{\varepsilon_{k},\xi_{k}})\phi_{k}\right]  ,Z_{\varepsilon_{k},\xi_{k}%
}^{h}\right\rangle _{\varepsilon_{k}}\nonumber\\
&  =-\frac{1}{\varepsilon_{k}^{n}}\int_{M}b(x)f^{\prime}(W_{\varepsilon
_{k},\xi_{k}})\phi_{k}Z_{\varepsilon_{k},\xi_{k}}^{h}d\mu_{g}. \label{eq:aj}%
\end{align}
Set
\[
\tilde{\phi}_{k}:=\left\{
\begin{array}
[c]{ll}%
\phi_{k}\left(  \exp_{\xi_{k}}(\varepsilon_{k}z)\right)  \chi(\varepsilon
_{k}z) & \text{\quad if }z\in B(0,r/\varepsilon_{k}),\\
0 & \text{\quad otherwise.}%
\end{array}
\right.
\]
It is easy to prove that $\Vert\tilde{\phi}_{k}\Vert_{H^{1}(\mathbb{R}^{n}%
)}\leq C\Vert\phi_{k}\Vert_{\varepsilon_{k}}\leq C$ for some positive constant
$C$. Thus, there exists $\tilde{\phi}\in H^{1}(\mathbb{R}^{n})$ such that, up
to a subsequence, $\tilde{\phi}_{k}\rightarrow\tilde{\phi}$ weakly in
$H^{1}(\mathbb{R}^{n})$ and strongly in $L_{\text{loc}}^{p}(\mathbb{R}^{n})$
for all $p\in(2,2_{n}^{\ast})$. Since $\phi_{k}\in K_{\varepsilon_{k},\xi_{k}%
}^{\bot}$ we get
\begin{align*}
\sum_{j=1}^{n}\alpha_{j}^{k}  &  \left\langle Z_{\varepsilon_{k},\xi_{k}}%
^{j},Z_{\varepsilon_{k},\xi_{k}}^{h}\right\rangle _{\varepsilon_{k}}=-\frac
{1}{\varepsilon_{k}^{n}}\int_{M}b(x)f^{\prime}(W_{\varepsilon_{k},\xi_{k}%
})\phi_{k}Z_{\varepsilon_{k},\xi_{k}}^{h}d\mu_{g}\\
&  =\left\langle \phi_{k},Z_{\varepsilon_{k},\xi_{k}}^{h}\right\rangle
_{\varepsilon}-\frac{1}{\varepsilon_{k}^{n}}\int_{M}b(x)f^{\prime
}(W_{\varepsilon_{k},\xi_{k}})\phi_{k}Z_{\varepsilon_{k},\xi_{k}}^{h}d\mu
_{g}\\
&  =\frac{1}{\varepsilon_{k}^{n}}\int_{M}\left[  \varepsilon_{k}^{2}%
c(x)\nabla\phi_{k}\nabla Z_{\varepsilon_{k},\xi_{k}}^{h}+a(x)Z_{\varepsilon
_{k},\xi_{k}}^{h}\phi_{k}-b(x)f^{\prime}(W_{\varepsilon_{k},\xi_{k}})\phi
_{k}Z_{\varepsilon_{k},\xi_{k}}^{h}\right]  d\mu_{g}\\
&  =\int_{B(0,r/\varepsilon_{k})}\sum_{l,m=1}^{n}g^{lm}c(\exp_{\xi_{k}%
}(\varepsilon_{k}z))(\varepsilon_{k}z)\frac{\partial\tilde{\phi}_{k}}{\partial
z_{l}}(z)\frac{\partial\psi_{\xi_{k}}^{h}}{\partial z_{m}}(z)|g(\varepsilon
_{k}z)|^{\frac{1}{2}}dz\\
&  \qquad+\int_{B(0,r/\varepsilon_{k})}a(\exp_{\xi_{k}}(\varepsilon
_{k}z))\tilde{\phi}_{k}(z)\psi_{\xi_{k}}^{h}(z)|g(\varepsilon_{k}z)|^{\frac
{1}{2}}dz\\
&  \qquad-\int_{B(0,r/\varepsilon_{k})}b(\exp_{\xi_{k}}(\varepsilon
_{k}z))f^{\prime}(V^{\xi_{k}}(z)\chi(\varepsilon_{k}z))\tilde{\phi}_{k}%
(z)\psi_{\xi_{k}}^{h}(z)|g(\varepsilon_{k}z)|^{\frac{1}{2}}dz\\
&  =\int_{\mathbb{R}^{n}}c(\xi_{k})\nabla\tilde{\phi_{k}}\nabla\psi_{\xi_{k}%
}^{h}+a(\xi_{k})\tilde{\phi}_{k}\psi_{\xi_{k}}^{h}-b(\xi_{k})f^{\prime}%
(V^{\xi_{k}})\tilde{\phi}_{k}\psi_{\xi_{k}}^{h}dz+o(1)=o(1),
\end{align*}
because $\psi_{\xi_{k}}^{h}\rightarrow\psi_{\xi}^{h}$, $\ V^{\xi_{k}%
}\rightarrow V^{\xi}$ strongly in $H^{1}(\mathbb{R}^{n})$ and $\psi_{\xi}^{h}$
is a weak solution of the linearized equation (\ref{eq:lin}). By Proposition
\ref{prop:Zi} we have that $\left\langle Z_{\varepsilon_{k},\xi_{k}}%
^{j},Z_{\varepsilon_{k},\xi_{k}}^{h}\right\rangle _{\varepsilon_{k}}%
=C\delta_{jh}+o(1)$, where $C>0$. So we conclude that
\[
C\alpha_{h}^{k}+o(1)=\sum_{j=1}^{n}\alpha_{j}^{k}\left\langle Z_{\varepsilon
_{k},\xi_{k}}^{j},Z_{\varepsilon_{k},\xi_{k}}^{h}\right\rangle _{\varepsilon
_{k}}=o(1).
\]
This implies that $\alpha_{h}^{k}\rightarrow0$ for each $h=1,\dots,n$, and,
consequently, that $\Vert\zeta_{k}\Vert_{\varepsilon_{k}}\rightarrow0$.

Setting $u_{k}:=\phi_{k}-\psi_{k}-\zeta_{k}$, equation (\ref{eq:zetagreca})
can be read as
\begin{equation}
-\varepsilon_{k}^{2}\text{div}_{g}\left(  c(x)\nabla_{g}u_{k}\right)
+a(x)u_{k}=b(x)f^{\prime}(W_{\varepsilon_{k},\xi_{k}})u_{k}+b(x)f^{\prime
}(W_{\varepsilon_{k},\xi_{k}})(\psi_{k}+\zeta_{k}), \label{eq:ukappa}%
\end{equation}
where $\Vert u_{k}\Vert_{\varepsilon_{k}}\rightarrow1.$ Multiplying
(\ref{eq:ukappa}) by $u_{k}$ and integrating by parts we get
\begin{equation}
\Vert u_{k}\Vert_{\varepsilon_{k}}^{2}=\frac{1}{\varepsilon_{k}^{n}}\int
_{M}b(x)\left(  f^{\prime}(W_{\varepsilon_{k},\xi_{k}})u_{k}^{2}+f^{\prime
}(W_{\varepsilon_{k},\xi_{k}})(\psi_{k}+\zeta_{k})u_{k}\right)  .
\label{eq:u1}%
\end{equation}
From Holder's inequality, recalling that $|u|_{\varepsilon,p}\leq C\Vert
u\Vert_{\varepsilon}$, we obtain
\begin{align}
\frac{1}{\varepsilon_{k}^{n}}\int_{M}b(x)  &  f^{\prime}(W_{\varepsilon
_{k},\xi_{k}})(\psi_{k}+\zeta_{k})u_{k}\nonumber\\
&  \leq C\left(  \frac{1}{\varepsilon_{k}^{n}}\int_{M}f^{\prime}%
(W_{\varepsilon_{k},\xi_{k}})^{\frac{n}{2}}\right)  ^{\frac{2}{n}}%
|u_{k}|_{\varepsilon_{k},\frac{2n}{n-2}}^{\frac{n-2}{2n}}|\psi_{k}+\zeta
_{k}|_{\varepsilon_{k},\frac{2n}{n-2}}^{\frac{n-2}{2n}}\nonumber\\
&  \leq C\left(  \frac{1}{\varepsilon_{k}^{n}}\int_{M}f^{\prime}%
(W_{\varepsilon_{k},\xi_{k}})^{\frac{n}{2}}\right)  ^{\frac{2}{n}}\Vert
u_{k}\Vert_{\varepsilon_{k}}^{\frac{n-2}{2n}}\Vert\psi_{k}+\zeta_{k}%
\Vert_{\varepsilon_{k}}^{\frac{n-2}{2n}}\nonumber\\
&  \leq C\Vert u_{k}\Vert_{\varepsilon_{k}}^{\frac{n-2}{2n}}\Vert\psi
_{k}+\zeta_{k}\Vert_{\varepsilon_{k}}^{\frac{n-2}{2n}}=o(1), \label{eq:u2}%
\end{align}
because
\begin{align}
\frac{1}{\varepsilon_{k}^{n}}\int_{M}f^{\prime}(W_{\varepsilon_{k},\xi_{k}%
})^{\frac{n}{2}}d\mu_{g}  &  \leq\frac{1}{\varepsilon_{k}^{n}}\int_{B_{g}%
(\xi_{k},r)}\left(  V_{\varepsilon_{k}}^{\xi_{k}}\left(  \exp_{\xi_{k}}%
^{-1}(x)\right)  \right)  ^{\frac{n(p-2)}{2}}d\mu_{g}\nonumber\\
&  \leq C\int_{B(0,r/\varepsilon)}\left(  V^{\xi}\left(  z\right)  \right)
^{\frac{n(p-2)}{2}}dz\leq C \label{eq:u3}%
\end{align}
for some positive constant $C$. Combining (\ref{eq:u1}), (\ref{eq:u2}),
(\ref{eq:u3}), we get
\begin{equation}
\frac{1}{\varepsilon_{k}^{n}}\int_{M}b(x)f^{\prime}(W_{\varepsilon_{k},\xi
_{k}})u_{k}^{2}\rightarrow1\text{\qquad as }k\rightarrow+\infty.
\label{eq:lim1}%
\end{equation}

We will show that this leads to a contradiction. Since $W_{\varepsilon_{k}%
,\xi_{k}}$ is compactly supported in $B_{g}(\xi_{k},r)$, also $u_{k}$ is
compactly supported in $B_{g}(\xi_{k},r)$. Set
\[
\tilde{u}_{k}(z):=u_{k}\left(  \exp_{\xi_{k}}(\varepsilon_{k}z)\right)
\text{\qquad for }z\in B(0,r/\varepsilon).
\]
We have that
\[
\Vert\tilde{u}_{k}\Vert_{H^{1}(\mathbb{R}^{n})}\leq C\Vert u_{k}%
\Vert_{\varepsilon_{k}}\leq C,
\]
so, up to subsequence, there exists $\tilde{u}\in H^{1}(\mathbb{R}^{n})$ such
that $\tilde{u}_{k}\rightarrow\tilde{u}$ weakly in $H^{1}(\mathbb{R}^{n})$ and
strongly in $L_{\text{loc}}^{p}(\mathbb{R}^{n})$, $p\in(2,2_{n}^{\ast})$.

From (\ref{eq:ukappa}) we deduce that%
\begin{equation}
-\Delta\tilde{u}+a(\xi)\tilde{u}=b(\xi)f^{\prime}(V^{\xi})\tilde{u}.
\label{eq:utilde}%
\end{equation}
Indeed, let $\varphi\in\mathcal{C}^{\infty}(\mathbb{R}^{n})$ and let $\rho>0$
be such that the support of $\varphi$ is contained in $B(0,\rho).$ For $k$
large we set
\[
\varphi_{k}(x):=\varphi\left(  \frac{1}{\varepsilon_{k}}\exp_{\xi_{k}}%
^{-1}(x)\right)  \chi\left(  \exp_{\xi_{k}}^{-1}(x)\right)  \text{\qquad for
}x\in B_{g}(\xi_{k},\varepsilon_{k}\rho).
\]
Thus, by (\ref{eq:ukappa}), we have
\begin{multline*}
\frac{1}{\varepsilon_{k}^{n}}\int_{M}\varepsilon_{k}^{2}c(x)\nabla_{g}%
u_{k}\nabla_{g}\varphi_{k}+a(x)u_{k}\varphi_{k}d\mu_{g}=\frac{1}%
{\varepsilon_{k}^{n}}\int_{M}b(x)f^{\prime}(W_{\varepsilon_{k},\xi_{k}}%
)u_{k}\varphi_{k}d\mu_{g}\\
+\frac{1}{\varepsilon_{k}^{n}}\int_{M}b(x)f^{\prime}(W_{\varepsilon_{k}%
,\xi_{k}})(\psi_{k}+\zeta_{k})\varphi_{k}d\mu_{g},
\end{multline*}
where the last term vanishes because $\psi_{k}\rightarrow0$ and $\zeta
_{k}\rightarrow0$ in $H_{\varepsilon}^{1}$. Moreover, we have
\begin{align*}
&  \frac{1}{\varepsilon_{k}^{n}}\int_{M}\varepsilon_{k}^{2}c(x)\nabla_{g}%
u_{k}\nabla_{g}\varphi_{k}\\
&  =\int_{B(0,\rho)}%
{\textstyle\sum\limits_{l,m=1}^{n}}
g^{lm}(\varepsilon_{k}z)c(\exp_{\xi_{k}}(\varepsilon_{k}z))\frac
{\partial\left(  \tilde{u}_{k}(z)\chi(\varepsilon_{k}z)\right)  }{\partial
z_{l}}\frac{\partial\varphi}{\partial z_{m}}|g(\varepsilon_{k}z)|^{\frac{1}%
{2}}dz\\
&  =c(\xi_{k})\int_{B(0,\rho)}\nabla\tilde{u}(z)\nabla\varphi(z)dz+o(1)\\
&  =c(\xi)\int_{B(0,\rho)}\nabla\tilde{u}(z)\nabla\varphi(z)dz+o(1),
\end{align*}%
\begin{align*}
\frac{1}{\varepsilon_{k}^{n}}\int_{M}a(x)u_{k}\varphi_{k}d\mu_{g}  &
=\int_{B(0,\rho)}a(\exp_{\xi_{k}}(\varepsilon_{k}z))\tilde{u}_{k}%
(z)\varphi(z)|g(\varepsilon_{k}z)|^{\frac{1}{2}}dz\\
&  =a(\xi_{k})\int_{B(0,\rho)}\tilde{u}_{k}(z)\varphi(z)dz+o(1)\\
&  =a(\xi)\int_{B(0,\rho)}\tilde{u}_{k}(z)\varphi(z)dz+o(1)
\end{align*}
and, analogously,
\[
\frac{1}{\varepsilon_{k}^{n}}\int_{M}b(x)f^{\prime}(W_{\varepsilon_{k},\xi
_{k}})u_{k}\varphi_{k}d\mu_{g}=b(\xi)\int_{B(0,\rho)}f^{\prime}(V^{\xi}%
)\tilde{u}_{k}(z)\varphi(z)dz+o(1),
\]
so we get (\ref{eq:utilde}).

Next we prove that
\begin{equation}
\int_{\mathbb{R}_{+}^{n}}\left(  c(\xi)\nabla\psi^{h}\nabla\tilde{u}%
+a(\xi)\psi^{h}\tilde{u}\right)  dz=0\text{\qquad for all }h\in1,\dots,n-1.
\label{eq:scalare}%
\end{equation}
In fact, since $\phi_{k},\psi_{k}\in K_{\varepsilon,\xi}^{\bot}$ and
$\Vert\zeta_{k}\Vert_{\varepsilon_{k}}\rightarrow0$, we have
\begin{equation}
\left\vert \left\langle Z_{\varepsilon_{k},\xi_{k}}^{h},u_{k}\right\rangle
_{\varepsilon_{k}}\right\vert =\left\vert \left\langle Z_{\varepsilon_{k}%
,\xi_{k}}^{h},\zeta_{k}\right\rangle _{\varepsilon_{k}}\right\vert \leq\Vert
Z_{\varepsilon_{k},\xi_{k}}^{h}\Vert_{\varepsilon_{k}}\Vert\zeta_{k}%
\Vert_{\varepsilon_{k}}=o(1). \label{eq:zetau}%
\end{equation}
On the other hand, using the Taylor expansion of $g^{ij}(\varepsilon z)$,
$|g(\varepsilon z)|^{\frac{1}{2}}$ we get
\begin{align}
\left\langle Z_{\varepsilon_{k},\xi_{k}}^{h},u_{k}\right\rangle _{\varepsilon
_{k}}  &  =\frac{1}{\varepsilon_{k}^{n}}\int_{M}\varepsilon_{k}^{2}\nabla
_{g}Z_{\varepsilon_{k},\xi_{k}}^{h}\nabla_{g}u_{k}+a(x)Z_{\varepsilon_{k}%
,\xi_{k}}^{h}u_{k}\nonumber\\
&  =\int_{B(0,r/\varepsilon_{k})}\sum_{l,m=1}^{n}g^{lm}(\varepsilon
_{k}z)c(\exp_{\xi_{k}}(\varepsilon z))\frac{\partial\left(  \psi^{h}%
(z)\chi(\varepsilon_{k}z)\right)  }{\partial z_{l}}\frac{\partial\tilde{u}%
_{k}}{\partial z_{m}}|g(\varepsilon_{k}z)|^{\frac{1}{2}}dz\nonumber\\
&  \qquad+\int_{B(0,r/\varepsilon_{k})}a(\exp_{\xi_{k}}(\varepsilon
z))\psi^{h}(z)\chi(\varepsilon_{k}z)\tilde{u}_{k}|g(\varepsilon_{k}%
z)|^{\frac{1}{2}}dz\nonumber\\
&  =\int_{\mathbb{R}_{+}^{n}}\left(  c(\xi)\nabla\psi^{h}\nabla\tilde{u}%
+a(\xi)\psi^{h}\tilde{u}\right)  dz+o(1), \label{eq:zetau2}%
\end{align}
because $c(\exp_{\xi_{k}}(\varepsilon z))=c(\xi_{k})+O(\varepsilon_{k}%
)=c(\xi)+o(1)$ and $a(\exp_{\xi_{k}}(\varepsilon z))=a(\xi)+o(1)$. So, from
(\ref{eq:zetau}) and (\ref{eq:zetau2}) we obtain (\ref{eq:scalare}).

Now, (\ref{eq:scalare}) and (\ref{eq:utilde}) imply that $\tilde{u}=0$.
Therefore,
\begin{align*}
\frac{1}{\varepsilon_{k}^{n}}\int_{M}f^{\prime}(W_{\varepsilon_{k},\xi_{k}%
}(x))u_{k}^{2}(x)d\mu_{g}  &  \leq\frac{1}{\varepsilon_{k}^{n}}\int
_{B_{g}(0,r)}f^{\prime}\left(  V_{\varepsilon}^{\xi}\left(  \exp_{\xi_{k}%
}^{-1}(x)\right)  \right)  u_{k}^{2}(x)d\mu_{g}\\
&  =C\int_{B(0,r/\varepsilon_{k})}f^{\prime}(V^{\xi}(z))\tilde{u}_{k}%
^{2}(z)=o(1),
\end{align*}
which contradicts (\ref{eq:lim1}). This concludes the proof.
\end{proof}

\begin{lemma}
\label{lem:3.3}There exist $\varepsilon_{0}>0$ and $C>0$ such that
\[
\Vert R_{\varepsilon,\xi}\Vert_{\varepsilon}\leq C\varepsilon\text{ for all
}\varepsilon\in(0,\varepsilon_{0}).
\]

\end{lemma}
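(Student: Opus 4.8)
The plan is to discard the orthogonal projection and estimate the remainder directly. Since $\Pi_{\varepsilon,\xi}^{\bot}$ is an orthogonal projection in $H_{\varepsilon}$ it has operator norm at most $1$, so $\Vert R_{\varepsilon,\xi}\Vert_{\varepsilon}\le\Vert i_{\varepsilon}^{\ast}[b(\cdot)f(W_{\varepsilon,\xi})]-W_{\varepsilon,\xi}\Vert_{\varepsilon}$ and it suffices to bound the right-hand side. The function $W_{\varepsilon,\xi}$ is smooth and supported in $B_{g}(\xi,r)$, hence $G_{\varepsilon,\xi}:=-\varepsilon^{2}\mathrm{div}_{g}(c\nabla_{g}W_{\varepsilon,\xi})+a\,W_{\varepsilon,\xi}$ is a smooth function on the closed manifold $M$, and integrating by parts against an arbitrary $\varphi\in H_{\varepsilon}$ gives $\langle i_{\varepsilon}^{\ast}(G_{\varepsilon,\xi}),\varphi\rangle_{\varepsilon}=\langle W_{\varepsilon,\xi},\varphi\rangle_{\varepsilon}$, i.e. $W_{\varepsilon,\xi}=i_{\varepsilon}^{\ast}(G_{\varepsilon,\xi})$. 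Therefore $i_{\varepsilon}^{\ast}[b(\cdot)f(W_{\varepsilon,\xi})]-W_{\varepsilon,\xi}=i_{\varepsilon}^{\ast}[b(\cdot)f(W_{\varepsilon,\xi})-G_{\varepsilon,\xi}]$ and, by \eqref{eq:istar},
\[
\Vert R_{\varepsilon,\xi}\Vert_{\varepsilon}\le C_{1}\bigl|\,b(\cdot)f(W_{\varepsilon,\xi})+\varepsilon^{2}\mathrm{div}_{g}(c\nabla_{g}W_{\varepsilon,\xi})-a\,W_{\varepsilon,\xi}\,\bigr|_{p^{\prime},\varepsilon}.
\]
Thus the whole statement reduces to showing that this $L_{\varepsilon}^{p^{\prime}}$-norm is $O(\varepsilon)$, uniformly in $\xi$.

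To estimate that norm I would pass to normal coordinates by writing $x=\exp_{\xi}(\varepsilon z)$, so that $|v|_{p^{\prime},\varepsilon}^{p^{\prime}}=\int_{\mathbb{R}^{n}}|v(\exp_{\xi}(\varepsilon z))|^{p^{\prime}}|g(\varepsilon z)|^{1/2}\,dz$, and split the integral into the ball $\{|z|\le r/(2\varepsilon)\}$, on which $\chi(\varepsilon z)\equiv1$ and hence $W_{\varepsilon,\xi}(\exp_{\xi}(\varepsilon z))=V^{\xi}(z)$ and $f(W_{\varepsilon,\xi})=(V^{\xi}(z))^{p-1}$, and the annulus $\{r/(2\varepsilon)\le|z|\le r/\varepsilon\}$. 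On the ball, the chain rule gives $\varepsilon^{2}\Delta_{g}W_{\varepsilon,\xi}(\exp_{\xi}(\varepsilon z))=g^{ij}(\varepsilon z)\partial_{ij}^{2}V^{\xi}(z)+\varepsilon\,[\partial(\sqrt{|g|}g^{ij})](\varepsilon z)\,\partial_{j}V^{\xi}(z)$; using the expansions \eqref{eq:espg1}, \eqref{eq:espg2} and $\partial_{k}g^{ij}(0)=0$, this equals $\Delta V^{\xi}(z)+O(\varepsilon^{2}|z|^{2})|D^{2}V^{\xi}(z)|+O(\varepsilon^{2}|z|)|\nabla V^{\xi}(z)|$, whereas $\varepsilon^{2}\langle\nabla_{g}c,\nabla_{g}W_{\varepsilon,\xi}\rangle_{g}=O(\varepsilon)|\nabla V^{\xi}(z)|$. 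Inserting these, together with $a(\exp_{\xi}(\varepsilon z))=a(\xi)+O(\varepsilon|z|)$, $b(\exp_{\xi}(\varepsilon z))=b(\xi)+O(\varepsilon|z|)$, $c(\exp_{\xi}(\varepsilon z))=c(\xi)+O(\varepsilon|z|)$, and using the limit equation \eqref{eq:Vxi-1} for $V^{\xi}$ to cancel the leading term $c(\xi)\Delta V^{\xi}-a(\xi)V^{\xi}+b(\xi)(V^{\xi})^{p-1}=0$, the integrand on the ball is bounded by $C\varepsilon(1+|z|+|z|^{2})\bigl((V^{\xi}(z))^{p-1}+|\nabla V^{\xi}(z)|+|D^{2}V^{\xi}(z)|\bigr)$.

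Finally I would use the exponential decay of $V^{\xi}$, $\nabla V^{\xi}$ and $D^{2}V^{\xi}$ — which follows from the scaling identity \eqref{eq:rescaling}, the classical decay estimates for the ground state $U$ of \eqref{eq:U}, and the boundedness of $\gamma(\xi)$ and $A(\xi)$ away from $0$ and $\infty$ (a consequence of the compactness of $M$ and the positivity of $a,b,c$) — to conclude that the contribution of the ball to $|b(\cdot)f(W_{\varepsilon,\xi})+\varepsilon^{2}\mathrm{div}_{g}(c\nabla_{g}W_{\varepsilon,\xi})-a\,W_{\varepsilon,\xi}|_{p^{\prime},\varepsilon}^{p^{\prime}}$ is at most $C\varepsilon^{p^{\prime}}$ times a convergent $z$-integral, with constant independent of $\xi$. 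On the annulus $\{|z|\ge r/(2\varepsilon)\}$ the extra terms generated by $\nabla\chi$ and $\Delta\chi$ are all multiplied by $V^{\xi}$, $\nabla V^{\xi}$ or $D^{2}V^{\xi}$ evaluated where $|z|\ge r/(2\varepsilon)$, hence are $O(e^{-c/\varepsilon})$ and negligible. Combining the two estimates yields $|b(\cdot)f(W_{\varepsilon,\xi})+\varepsilon^{2}\mathrm{div}_{g}(c\nabla_{g}W_{\varepsilon,\xi})-a\,W_{\varepsilon,\xi}|_{p^{\prime},\varepsilon}\le C\varepsilon$, and therefore $\Vert R_{\varepsilon,\xi}\Vert_{\varepsilon}\le C\varepsilon$ for all $\varepsilon\in(0,\varepsilon_{0})$. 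The main technical obstacle is the bookkeeping in the second and third steps: every error term must be kept together with an explicit polynomial weight in $z$, and one must verify that all the resulting weighted $L^{p^{\prime}}$-integrals of $V^{\xi}$ and of its first and second derivatives converge with a bound that is uniform in $\xi$ — which is precisely where the uniform exponential decay and the compactness of $M$ are used.
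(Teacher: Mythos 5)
Your argument is correct and follows essentially the same route as the paper: discard the projection, write $W_{\varepsilon,\xi}=i_{\varepsilon}^{\ast}(G_{\varepsilon,\xi})$, reduce via (\ref{eq:istar}) to an $L^{p^{\prime}}_{\varepsilon}$ bound, and cancel the leading term against the limit equation (\ref{eq:Vxi-1}) in rescaled normal coordinates, with the Taylor expansions of the metric and of $a,b,c$ plus the exponential decay of $V^{\xi}$ controlling the remainders (your divergence-form bookkeeping, which keeps the $\varepsilon^{2}\nabla_{g}c\cdot\nabla_{g}W_{\varepsilon,\xi}$ term explicitly, is if anything a bit more careful than the paper's use of (\ref{eq:Laplcoord})). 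One trivial remark: the $O(\varepsilon|z|)$ error from expanding $a$ multiplies $V^{\xi}$ itself, so $V^{\xi}$ should be added to your list $(V^{\xi})^{p-1}+|\nabla V^{\xi}|+|D^{2}V^{\xi}|$; this changes nothing since $V^{\xi}$ also decays exponentially, uniformly in $\xi$.
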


\begin{proof}
We recall that, if $\tilde{v}(\eta):=v(\exp_{\xi}(\eta))$, then
\begin{equation}
\Delta_{g}v=\Delta\tilde{v}+(g_{\xi}^{ij}-\delta_{ij})\partial_{ij}\tilde
{v}-g_{\xi}^{ij}\Gamma_{ij}^{k}\partial_{k}\tilde{v}, \label{eq:Laplcoord}%
\end{equation}
where $\Gamma_{ij}^{k}$ are the Christoffel symbols. Let $G_{\varepsilon,\xi}$
be a function such that $W_{\varepsilon,\xi}=i_{\varepsilon}^{\ast
}(b(x)G_{\varepsilon,\xi})$, i.e.
\[
-\varepsilon^{2}\Delta_{g}W_{\varepsilon,\xi}+A(x)W_{\varepsilon,\xi
}=B(x)G_{\varepsilon,\xi}.
\]
Then we have
\begin{align*}
B(x)G_{\varepsilon,\xi}  &  =-\varepsilon^{2}\Delta_{g}W_{\varepsilon,\xi
}+A(x)W_{\varepsilon,\xi}\\
=  &  -\varepsilon^{2}\Delta(V_{\varepsilon}^{\xi}(\eta)\chi(\eta
))-\varepsilon^{2}(g_{\xi}^{ij}-\delta_{ij})\partial_{ij}(V_{\varepsilon}%
^{\xi}(\eta)\chi(\eta))\\
&  +\varepsilon^{2}g_{\xi}^{ij}\Gamma_{ij}^{k}\partial_{k}(V_{\varepsilon
}^{\xi}(\eta)\chi(\eta))+A(\exp_{\xi}(\eta))V_{\varepsilon}^{\xi}(\eta
)\chi(\eta)\\
=  &  -\varepsilon^{2}V_{\varepsilon}^{\xi}(\eta)\Delta\chi(\eta
)-2\varepsilon^{2}\nabla V_{\varepsilon}^{\xi}(\eta)\nabla\chi(\eta
)-\varepsilon^{2}(g_{\xi}^{ij}-\delta_{ij})\partial_{ij}(V_{\varepsilon}^{\xi
}(\eta)\chi(\eta))\\
&  +\varepsilon^{2}g_{\xi}^{ij}\Gamma_{ij}^{k}\partial_{k}(V_{\varepsilon
}^{\xi}(\eta)\chi(\eta))+[A(\exp_{\xi}(\eta))-A(\xi)]V_{\varepsilon}^{\xi
}(\eta)\chi(\eta)\\
&  +B(\xi)V_{\varepsilon}^{\xi}(\eta)\chi(\eta)
\end{align*}
using (\ref{eq:Vxi}). By definition of $R_{\varepsilon,\xi}$ and inequality
(\ref{eq:istar}) we have that
\[
\Vert R_{\varepsilon,\xi}\Vert_{\varepsilon}\leq\Vert i_{\varepsilon}^{\ast
}(b(x)f(W_{\varepsilon,\xi}))-W_{\varepsilon,\xi}\Vert_{\varepsilon}%
\leq|W_{\varepsilon,\xi}^{p-1}-G_{\varepsilon,\xi}|_{p^{\prime},\varepsilon}.
\]
Now
\begin{align*}
\int_{M}|W_{\varepsilon,\xi}^{p-1}-G_{\varepsilon,\xi}|^{p^{\prime}}d\mu
_{g}\leq &  C\int_{B(0,r)}\left\vert \left(  V_{\varepsilon}^{\xi}(\eta
)\chi(\eta)\right)  ^{p-1}-G_{\varepsilon,\xi}(\exp_{\xi}(\eta))\right\vert
^{p^{\prime}}d\eta\\
\leq &  C\int_{B(0,r)}\left\vert \left(  V_{\varepsilon}^{\xi}(\eta)\right)
^{p-1}[\chi(\eta)^{p-1}-\chi(\eta)]\right\vert ^{p^{\prime}}d\eta\\
&  +C\varepsilon^{2p^{\prime}}\int_{B(0,r)}\left\vert V_{\varepsilon}^{\xi
}(\eta)\Delta\chi(\eta)\right\vert ^{p^{\prime}}d\eta\\
&  +C\varepsilon^{2p^{\prime}}\int_{B(0,r)}\left\vert \nabla V_{\varepsilon
}^{\xi}(\eta)\nabla\chi(\eta)\right\vert ^{p^{\prime}}d\eta\\
&  +C\varepsilon^{2p^{\prime}}\int_{B(0,r)}\left\vert (g_{\xi}^{ij}%
(\eta)-\delta_{ij})\partial_{ij}(V_{\varepsilon}^{\xi}(\eta)\chi
(\eta))\right\vert ^{p^{\prime}}d\eta\\
&  +C\varepsilon^{2p^{\prime}}\int_{B(0,r)}\left\vert g_{\xi}^{ij}(\eta
)\Gamma_{ij}^{k}(\eta)\partial_{k}(V_{\varepsilon}^{\xi}(\eta)\chi
(\eta))\right\vert ^{p^{\prime}}d\eta\\
&  +C\int_{B(0,r)}\left\vert [A(\exp_{\xi}(\eta))-a(\xi)]V_{\varepsilon}^{\xi
}(\eta)\chi(\eta)\right\vert ^{p^{\prime}}d\eta\\
&  =I_{1}+I_{2}+I_{3}+I_{4}+I_{5}+I_{6}.
\end{align*}
Using the exponential decay of $V_{\varepsilon}^{\xi}$ and its derivative we
get
\[
I_{1}+I_{2}+I_{3}=o(\varepsilon^{n+2p^{\prime}}).
\]
Moreover,
\begin{align*}
I_{4}  &  =C\varepsilon^{2p^{\prime}}\int_{B(0,r)}\left\vert (g_{\xi}%
^{ij}(\eta)-\delta_{ij})\partial_{ij}(V_{\varepsilon}^{\xi}(\eta)\chi
(\eta))\right\vert ^{p^{\prime}}d\eta\\
&  =C\varepsilon^{2p^{\prime}}\int_{B(0,r)}\left\vert (g_{\xi}^{ij}%
(\eta)-\delta_{ij})[\partial_{ij}V_{\varepsilon}^{\xi}(\eta)]\chi
(\eta)\right\vert ^{p^{\prime}}d\eta+o(\varepsilon^{n+2p^{\prime}})\\
&  =C\varepsilon^{n}\int_{B(0,r/\varepsilon)}\left\vert (g_{\xi}%
^{ij}(\varepsilon z)-\delta_{ij})[\partial_{ij}V^{\xi}(z)]\chi(\varepsilon
z)\right\vert ^{p^{\prime}}dz+o(\varepsilon^{n+2p^{\prime}})\\
&  =O(\varepsilon^{n+2p^{\prime}})
\end{align*}
in light of the Taylor expansion of $g_{\xi}^{ij}(\varepsilon z),$ and
\begin{align*}
I_{5}  &  =C\varepsilon^{2p^{\prime}}\int_{B(0,r)}\left\vert g_{\xi}^{ij}%
(\eta)\Gamma_{ij}^{k}(\eta)\partial_{k}(V_{\varepsilon}^{\xi}(\eta)\chi
(\eta))\right\vert ^{p^{\prime}}d\eta\\
&  =C\varepsilon^{2p^{\prime}}\int_{B(0,r)}\left\vert \Gamma_{ij}^{k}%
(\eta)[\partial_{k}V_{\varepsilon}^{\xi}(\eta)]\chi(\eta)\right\vert
^{p^{\prime}}d\eta+o(\varepsilon^{n+2p^{\prime}})\\
&  =C\varepsilon^{p^{\prime}+N}\int_{B(0,r/\varepsilon)}\left\vert \Gamma
_{ij}^{k}(\varepsilon z)[\partial_{k}V^{\xi}(z)]\chi(\varepsilon z)\right\vert
^{p^{\prime}}dz+o(\varepsilon^{n+2p^{\prime}})\\
&  =O(\varepsilon^{n+2p^{\prime}})
\end{align*}
since $\Gamma_{ij}^{k}(\varepsilon z)=\Gamma_{ij}^{k}(0)+O(\varepsilon
|z|)=O(\varepsilon|z|)$. Finally, setting $\tilde{A}(\eta)=A(\exp_{\xi}%
(\eta))$ and using the Taylor expansion of $\tilde{A}(\varepsilon z)$, we get
\begin{align*}
I_{6}  &  =C\int_{B(0,r)}\left\vert [\tilde{A}(\eta)-\tilde{A}%
(0)]V_{\varepsilon}^{\xi}(\eta)\chi(\eta)\right\vert ^{p^{\prime}}d\eta=\\
&  =C\varepsilon^{n}\int_{B(0,r/\varepsilon)}\left\vert [\tilde{A}(\varepsilon
z)-\tilde{A}(0)]V^{\xi}(z)\chi(\varepsilon z)\right\vert ^{p^{\prime}%
}dz=O(\varepsilon^{n+p^{\prime}}).
\end{align*}
This concludes the proof.
\end{proof}

\medskip

\begin{proof}
[Proof of Proposition \ref{prop:phieps}]We use a fixed point argument to show
existence of a solution to equation (\ref{eq:red1}). We define the operator
\begin{align*}
T_{\varepsilon,\xi}  &  :K_{\varepsilon,\xi}^{\bot}\rightarrow K_{\varepsilon
,\xi}^{\bot}\\
T_{\varepsilon,\xi}(\phi)  &  :=L_{\varepsilon,\xi}^{-1}\left(  N_{\varepsilon
,\xi}(\phi)+R_{\varepsilon,\xi}\right)
\end{align*}
By Lemma \ref{lem:Linv}, $T_{\varepsilon,\xi}$ is well defined and the
inequalities
\begin{align*}
\Vert T_{\varepsilon,\xi}(\phi)\Vert_{\varepsilon}  &  \leq C\left(  \Vert
N_{\varepsilon,\xi}(\phi)\Vert_{\varepsilon}+\Vert R_{\varepsilon,\xi}%
\Vert_{\varepsilon}\right) \\
\Vert T_{\varepsilon,\xi}(\phi_{1})-T_{\varepsilon,\xi}(\phi_{2}%
)\Vert_{\varepsilon}  &  \leq C\left(  \Vert N_{\varepsilon,\xi}(\phi
_{1})-N_{\varepsilon,\xi}(\phi_{2})\Vert_{\varepsilon}\right)
\end{align*}
hold true for some suitable constant $C>0$. From the mean value theorem and
inequality (\ref{eq:istar}) we get
\[
\Vert N_{\varepsilon,\xi}(\phi_{1})-N_{\varepsilon,\xi}(\phi_{2}%
)\Vert_{\varepsilon}\leq C\left\vert f^{\prime}(W_{\varepsilon,\xi}+\phi
_{2}+t(\phi_{1}-\phi_{2}))-f^{\prime}(W_{\varepsilon,\xi})\right\vert
_{\frac{p}{p-2},\varepsilon}\Vert\phi_{1}-\phi_{2}\Vert_{\varepsilon}.
\]
Using (\ref{eq:stimaf}) we can prove that $\left\vert f^{\prime}%
(W_{\varepsilon,\xi}+\phi_{2}+t(\phi_{1}-\phi_{2}))-f^{\prime}(W_{\varepsilon
,\xi})\right\vert _{\frac{p}{p-2},\varepsilon}<1$ provided $\Vert\phi_{1}%
\Vert_{\varepsilon}$ and $\Vert\phi_{2}\Vert_{\varepsilon}$ small enough.
Thus, there exists $0<C<1$ such that $\Vert T_{\varepsilon,\xi}(\phi
_{1})-T_{\varepsilon,\xi}(\phi_{2})\Vert_{\varepsilon}\leq C\Vert\phi_{1}%
-\phi_{2}\Vert_{\varepsilon}$. Moreover, from the same estimates we get
\[
\Vert N_{\varepsilon,\xi}(\phi)\Vert_{\varepsilon}\leq C\left(  \Vert\phi
\Vert_{\varepsilon}^{2}+\Vert\phi\Vert_{\varepsilon}^{p-1}\right)  .
\]
This, combined with Lemma \ref{lem:3.3}, gives
\[
\Vert T_{\varepsilon,\xi}(\phi)\Vert_{\varepsilon}\leq C\left(  \Vert
N_{\varepsilon,\xi}(\phi)\Vert_{\varepsilon}+\Vert R_{\varepsilon,\xi}%
\Vert_{\varepsilon}\right)  \leq C\left(  \Vert\phi\Vert_{\varepsilon}%
^{2}+\Vert\phi\Vert_{\varepsilon}^{p-1}+\varepsilon\right)  .
\]
So, there exists $C>0$ such that $T_{\varepsilon,\xi}$ maps the ball of center
$0$ and radius $C\varepsilon$ in $K_{\varepsilon,\xi}^{\bot}$ into itself, and
it is a contraction. It follows that $T_{\varepsilon,\xi}$ has a fixed point
$\phi_{\varepsilon,\xi}$ with norm $\Vert\phi_{\varepsilon,\xi}\Vert
_{\varepsilon}\leq\varepsilon$.

The continuity of $\phi_{\varepsilon,\xi}$ with respect to $\xi$ can be proved
by similar arguments via the implicit function theorem.
\end{proof}

\section{The reduced functional}

\label{sec:redfunc}In this section we obtain the expansion of the reduced
energy functional $\tilde{J}_{\varepsilon}(\xi):=J_{\varepsilon}%
(W_{\varepsilon,\xi}+\Phi_{\varepsilon,\xi})$ stated in Proposition
\ref{lem:tool2}.

\begin{lemma}
\label{lem:redc0}We have that
\[
\tilde{J}_{\varepsilon}(\xi):=J_{\varepsilon}(W_{\varepsilon,\xi}%
+\Phi_{\varepsilon,\xi})=J_{\varepsilon}(W_{\varepsilon,\xi})+O(\varepsilon)
\]
$\mathcal{C}^{1}$-uniformly with respect to $\xi\in M$ as $\varepsilon
\rightarrow0$.
\end{lemma}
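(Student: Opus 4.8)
The plan is to expand $J_\varepsilon(W_{\varepsilon,\xi}+\Phi_{\varepsilon,\xi})$ in powers of $\Phi_{\varepsilon,\xi}$ and use the smallness estimate $\Vert\Phi_{\varepsilon,\xi}\Vert_\varepsilon<C\varepsilon$ from Proposition \ref{prop:phieps}. Writing $\Phi=\Phi_{\varepsilon,\xi}$ for brevity, a Taylor expansion of $J_\varepsilon$ around $W_{\varepsilon,\xi}$ gives
\[
J_\varepsilon(W_{\varepsilon,\xi}+\Phi)=J_\varepsilon(W_{\varepsilon,\xi})+J_\varepsilon'(W_{\varepsilon,\xi})[\Phi]+\tfrac12 J_\varepsilon''(W_{\varepsilon,\xi}+t\Phi)[\Phi,\Phi]
\]
for some $t\in(0,1)$. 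For the quadratic term, since $J_\varepsilon''(u)[\phi,\phi]=\Vert\phi\Vert_\varepsilon^2-\frac{p-1}{\varepsilon^n}\int_M b(x)(u^+)^{p-2}\phi^2$, one bounds it by $C(\Vert\Phi\Vert_\varepsilon^2+\Vert\Phi\Vert_\varepsilon^p)=O(\varepsilon^2)$ using Hölder and the Sobolev inequality $|u|_{p,\varepsilon}\le C\Vert u\Vert_\varepsilon$, exactly as in the estimates for $N_{\varepsilon,\xi}$ in Section \ref{sec:finitedim}. The linear term is handled by recalling that $\Phi$ solves (\ref{eq:red1}), so $W_{\varepsilon,\xi}+\Phi-i_\varepsilon^\ast(b(x)f(W_{\varepsilon,\xi}+\Phi))\in K_{\varepsilon,\xi}$; pairing with $\Phi\in K_{\varepsilon,\xi}^\bot$ gives $J_\varepsilon'(W_{\varepsilon,\xi}+\Phi)[\Phi]=0$, hence $J_\varepsilon'(W_{\varepsilon,\xi})[\Phi]=-J_\varepsilon''(W_{\varepsilon,\xi}+\tau\Phi)[\Phi,\Phi]+\cdots$, which is again $O(\varepsilon^2)$ by the same kind of bound. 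This gives the $C^0$ statement with the better rate $O(\varepsilon^2)$, in particular $O(\varepsilon)$.

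For the $\mathcal{C}^1$ part one must estimate $\partial_{y_h}\big[\tilde J_\varepsilon(\exp_{\xi_0}(y))-J_\varepsilon(W_{\varepsilon,\xi(y)})\big]$ at $y=0$. Differentiating, and again using that $\Phi_{\varepsilon,\xi(y)}$ solves (\ref{eq:red1}) so that the variation of $\Phi$ contributes only through the $K_{\varepsilon,\xi}$ component, the derivative reduces to terms of the form $J_\varepsilon'(W_{\varepsilon,\xi})\big[\partial_{y_h}W_{\varepsilon,\xi}\big]$ minus the corresponding expression with $W+\Phi$, plus quadratic-in-$\Phi$ remainders paired with $\partial_{y_h}W_{\varepsilon,\xi}$ and with $\partial_{y_h}\Phi_{\varepsilon,\xi}$. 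The point is that $\Vert\partial_{y_h}W_{\varepsilon,\xi}\Vert_\varepsilon=O(1/\varepsilon)$ (from (\ref{eq:derWeps}) and the rescaling, since the $1/\varepsilon$-factor dominates) and one has a bound $\Vert\partial_{y_h}\Phi_{\varepsilon,\xi}\Vert_\varepsilon=O(1)$, to be obtained by differentiating the fixed-point identity $\Phi=T_{\varepsilon,\xi}(\Phi)$ with respect to $\xi$ and using Lemma \ref{lem:Linv} together with the $\mathcal{C}^1$-dependence from Proposition \ref{prop:phieps}. Combining these, each error term is a product of something $O(1/\varepsilon)$ with something $O(\varepsilon^2)$, or of something $O(1)$ with something $O(\varepsilon)$, hence $O(\varepsilon)$; the bounds are uniform in $\xi$ because all the Taylor expansions (\ref{eq:espE})--(\ref{eq:espg2}) and the decay of $V^\xi$, $\psi_\xi^i$ are uniform in $\xi\in M$ by compactness.

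The main obstacle is the bookkeeping for the $\mathcal{C}^1$ estimate: one has to verify carefully that the potentially dangerous term $J_\varepsilon'(W_{\varepsilon,\xi})[\partial_{y_h}W_{\varepsilon,\xi}]$, which a priori is only $O(1/\varepsilon)\cdot O(\varepsilon)=O(1)$, actually combines with its $\Phi$-corrected counterpart to produce a gain of one power of $\varepsilon$. This is where one needs that $R_{\varepsilon,\xi}=\Pi_{\varepsilon,\xi}^\bot\{i_\varepsilon^\ast(b(x)f(W_{\varepsilon,\xi}))-W_{\varepsilon,\xi}\}$ has $\Vert R_{\varepsilon,\xi}\Vert_\varepsilon\le C\varepsilon$ (Lemma \ref{lem:3.3}) in a differentiated form, i.e. that $\partial_{y_h}R_{\varepsilon,\xi}$ is controlled after testing against $\Phi$; the orthogonality $\Phi\in K_{\varepsilon,\xi}^\bot$ and the fact that the $K_{\varepsilon,\xi}$-components are handled by Proposition \ref{prop:Zi} and Lemmas \ref{lem:6.1}--\ref{lem:6.2} are what make the cancellation work. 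Everything else is a routine, if lengthy, repetition of the Hölder/Sobolev estimates already used for $N_{\varepsilon,\xi}$ and $R_{\varepsilon,\xi}$.
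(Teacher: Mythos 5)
Your $\mathcal{C}^0$ step is correct and is essentially the paper's Step 1: you use that $\phi_{\varepsilon,\xi}$ solves (\ref{eq:red1}) to kill the linear term (pairing with $\phi_{\varepsilon,\xi}\in K_{\varepsilon,\xi}^{\bot}$) and bound the quadratic remainder by $\Vert\phi_{\varepsilon,\xi}\Vert_{\varepsilon}^{2}+\Vert\phi_{\varepsilon,\xi}\Vert_{\varepsilon}^{p}=O(\varepsilon^{2})$, which is exactly the paper's computation organized as a Taylor expansion.

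The $\mathcal{C}^1$ step is where the real content of the lemma lies (the paper devotes its appendix proof to it), and there your argument has genuine gaps. First, the bound $\Vert\partial_{y_h}\phi_{\varepsilon,\xi}\Vert_{\varepsilon}=O(1)$ is asserted, not proved: the $\mathcal{C}^1$-dependence in Proposition \ref{prop:phieps} carries no quantitative information, and if you differentiate the fixed-point identity the inhomogeneous term contains $\bigl(\mathrm{Id}-i_{\varepsilon}^{\ast}[b\,f^{\prime}(W_{\varepsilon,\xi}+\phi_{\varepsilon,\xi})\,\cdot\,]\bigr)\partial_{y_h}W_{\varepsilon,\xi}$, which crude bounds only control by $O(1/\varepsilon)$; to do better you already need the two estimates that carry the whole proof, namely $\Vert\partial_{y_h}W_{\varepsilon,\xi(y)}+\tfrac{1}{\varepsilon}Z_{\varepsilon,\xi(y)}^{h}\Vert_{\varepsilon}\leq C\varepsilon$ (the paper's Lemma \ref{lem:6.3}) and $\Vert Z_{\varepsilon,\xi}^{h}-i_{\varepsilon}^{\ast}[b\,f^{\prime}(W_{\varepsilon,\xi})Z_{\varepsilon,\xi}^{h}]\Vert_{\varepsilon}\leq C\varepsilon$ (Lemma \ref{lem:5.2}). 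These two facts, not Lemmas \ref{lem:6.1}--\ref{lem:6.2} which you cite, are what turn the dangerous term $\bigl(J_{\varepsilon}^{\prime}(W_{\varepsilon,\xi}+\phi_{\varepsilon,\xi})-J_{\varepsilon}^{\prime}(W_{\varepsilon,\xi})\bigr)[\partial_{y_h}W_{\varepsilon,\xi}]$ from $O(1)$ into $O(\varepsilon)$: one writes $\partial_{y_h}W=-\tfrac{1}{\varepsilon}Z^{h}+O(\varepsilon)$ and uses $\langle\phi_{\varepsilon,\xi},Z^{h}\rangle_{\varepsilon}=0$ together with the self-adjointness of $i_{\varepsilon}^{\ast}[b f^{\prime}(W_{\varepsilon,\xi})\,\cdot\,]$ to reduce the linear-in-$\phi$ part to $\tfrac{1}{\varepsilon}\langle\phi_{\varepsilon,\xi},Z^{h}-i_{\varepsilon}^{\ast}[b f^{\prime}(W_{\varepsilon,\xi})Z^{h}]\rangle_{\varepsilon}=O(\varepsilon)$. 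You correctly sense that a ``differentiated Lemma \ref{lem:3.3} tested against $\phi$'' is needed, but you neither state it precisely nor prove it. Second, the term $J_{\varepsilon}^{\prime}(W_{\varepsilon,\xi}+\phi_{\varepsilon,\xi})[\partial_{y_h}\phi_{\varepsilon,\xi}]$ requires the Lagrange-multiplier estimate $\sum_{l}|C_{\varepsilon}^{l}|=O(\varepsilon)$, which you never address; the paper proves it (using that $V^{\xi}$ solves the limit equation) and then avoids any norm bound on $\partial_{y_h}\phi_{\varepsilon,\xi}$ altogether via the orthogonality identity $\langle Z^{l},\partial_{y_h}\phi_{\varepsilon,\xi}\rangle_{\varepsilon}=-\langle\partial_{y_h}Z^{l},\phi_{\varepsilon,\xi}\rangle_{\varepsilon}=O(1)$. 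So your plan has the right architecture, but the quantitative lemmas that constitute the actual proof of the $\mathcal{C}^1$ estimate are missing, and one of your intermediate claims rests on them circularly.
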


\begin{proof}
We divide the proof into two steps: first, we show that the estimate holds
true $\mathcal{C}^{0}$-uniformly with respect to $\xi$, and then we show that
it holds true $\mathcal{C}^{1}$-uniformly as well.

\textbf{Step 1}: \ $J_{\varepsilon}(W_{\varepsilon,\xi}+\phi_{\varepsilon,\xi
})=J_{\varepsilon}(W_{\varepsilon,\xi})+O(\varepsilon)$ holds true
$\mathcal{C}^{0}$-uniformly with respect to $\xi\in M$ as $\varepsilon
\rightarrow0$.

Indeed, by (\ref{eq:red1}), we have that
\begin{align*}
J_{\varepsilon}(W_{\varepsilon,\xi}+\phi_{\varepsilon,\xi})-J_{\varepsilon
}(W_{\varepsilon,\xi})=  &  \frac{1}{2}\Vert\phi_{\varepsilon,\xi}%
\Vert_{\varepsilon}^{2}+\frac{1}{\varepsilon^{n}}\int\varepsilon^{2}c(x)\nabla
W_{\varepsilon,\xi}\nabla\phi_{\varepsilon,\xi}+a(x)W_{\varepsilon,\xi}%
\phi_{\varepsilon,\xi}\\
&  -\frac{1}{\varepsilon^{n}}\int b(x)[F(W_{\varepsilon,\xi}+\phi
_{\varepsilon,\xi})-F(W_{\varepsilon,\xi})]\\
=  &  -\frac{1}{2}\Vert\phi_{\varepsilon,\xi}\Vert_{\varepsilon}^{2}+\frac
{1}{\varepsilon^{n}}\int b(x)[f(W_{\varepsilon,\xi}+\phi_{\varepsilon,\xi
})-f(W_{\varepsilon,\xi})]\phi_{\varepsilon,\xi}\\
&  -\frac{1}{\varepsilon^{n}}\int b(x)[F(W_{\varepsilon,\xi}+\phi
_{\varepsilon,\xi})-F(W_{\varepsilon,\xi})-f(W_{\varepsilon,\xi}%
+\phi_{\varepsilon,\xi})\phi_{\varepsilon,\xi}],
\end{align*}
where $F(u)=\frac{1}{p}|u^{+}|^{p}$. Using the mean value theorem we obtain
\begin{align*}
\int b(x)[f(W_{\varepsilon,\xi}+\phi_{\varepsilon,\xi})-f(W_{\varepsilon,\xi
})]\phi_{\varepsilon,\xi}  &  =\int b(x)f^{\prime}(W_{\varepsilon,\xi}%
+t_{1}\phi_{\varepsilon,\xi})\phi_{\varepsilon,\xi}^{2}\\
\int b(x)[F(W_{\varepsilon,\xi}+\phi_{\varepsilon,\xi})-F(W_{\varepsilon,\xi
})-f(W_{\varepsilon,\xi}+\phi_{\varepsilon,\xi})\phi_{\varepsilon,\xi}]  &
=\int b(x)f^{\prime}(W_{\varepsilon,\xi}+t_{2}\phi_{\varepsilon,\xi}%
)\phi_{\varepsilon,\xi}^{2}%
\end{align*}
for some $t_{1},t_{2}\in(0,1)$. Now, since $\Vert\phi_{\varepsilon,\xi}%
\Vert_{\varepsilon}=O(\varepsilon)$,
\begin{align*}
\frac{1}{\varepsilon^{n}}\int|b(x)f^{\prime}(W_{\varepsilon,\xi}+t_{1}%
\phi_{\varepsilon,\xi})\phi_{\varepsilon,\xi}^{2}|  &  \leq\frac
{C}{\varepsilon^{n}}\int W_{\varepsilon,\xi}^{p-2}\phi_{\varepsilon,\xi}%
^{2}+\frac{C}{\varepsilon^{n}}\int\phi_{\varepsilon,\xi}^{p}\\
&  \leq|\phi_{\varepsilon,\xi}|_{2,\varepsilon}^{2}+|\phi_{\varepsilon,\xi
}|_{p,\varepsilon}^{p}\\
&  \leq\Vert\phi_{\varepsilon,\xi}\Vert_{\varepsilon}^{2}+\Vert\phi
_{\varepsilon,\xi}\Vert_{\varepsilon}^{p}=o(\varepsilon)
\end{align*}
and the claim follows.

\textbf{Step 2}: \ Setting $\xi(y)=\exp_{\xi_{0}}(y)$, we have that
\[
\left.  \frac{\partial}{\partial y_{h}}J_{\varepsilon}(W_{\varepsilon,\xi
}+\phi_{\varepsilon,\xi})\right\vert _{y=0}=\left.  \frac{\partial}{\partial
y_{h}}J_{\varepsilon}(W_{\varepsilon,\xi(y)})\right\vert _{y=0}+O(\varepsilon
)
\]
$\mathcal{C}^{0}$-uniformly with respect to $\xi\in M$ as $\varepsilon
\rightarrow0$ for all $h=1,\dots,n$.

Since the proof of this statement is lengthy, we postpone it to Appendix
\ref{sec:appendix}.
\end{proof}

Now we expand the function $\xi\mapsto J_{\varepsilon}(W_{\varepsilon,\xi})$.

\begin{lemma}
\label{lem:expJeps}The expansion
\begin{align*}
J_{\varepsilon}(W_{\varepsilon,\xi})  &  =\frac{1}{2}\int_{\mathbb{R}^{n}%
}c(\xi)|\nabla V^{\xi}(z)|^{2}+a(\xi)\left(  V^{\xi}\right)  ^{2}%
(z)dz-\frac{1}{p}\int_{\mathbb{R}^{n}}b(\xi)\left(  V^{\xi}\right)
^{p}(z)dz+O(\varepsilon)\\
&  =\left(  \frac{p-2}{2p}\int_{\mathbb{R}^{n}}U^{p}dz\right)  \frac
{c(\xi)^{\frac{n}{2}}a(\xi)^{\frac{p}{p-2}-\frac{n}{2}}}{b(\xi)^{\frac{2}%
{p-2}}}+O(\varepsilon).
\end{align*}
holds true $\mathcal{C}^{1}$-uniformly with respect to $\xi\in M$.
\end{lemma}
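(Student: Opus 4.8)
The plan is to compute $J_{\varepsilon}(W_{\varepsilon,\xi})$ directly by passing to rescaled normal coordinates centered at $\xi$. Writing $J_{\varepsilon}(u)=\varepsilon^{-n}\int_{M}\big(\tfrac{\varepsilon^{2}}{2}c(x)|\nabla_{g}u|^{2}+\tfrac12 a(x)u^{2}-\tfrac1p b(x)(u^{+})^{p}\big)d\mu_{g}$, using that $W_{\varepsilon,\xi}(\exp_{\xi}(\varepsilon z))=V^{\xi}(z)\chi(\varepsilon z)\geq0$ (so $(W_{\varepsilon,\xi}^{+})^{p}=W_{\varepsilon,\xi}^{p}$), and performing the change of variables $x=\exp_{\xi}(\varepsilon z)$ (for which $d\mu_{g}=|g_{\xi}(\varepsilon z)|^{1/2}\varepsilon^{n}dz$ and the $\varepsilon^{-2}$ produced by $\nabla_{g}$ cancels the prefactor $\varepsilon^{2}$), one rewrites $J_{\varepsilon}(W_{\varepsilon,\xi})$ as an integral over $B(0,r/\varepsilon)$ of $c,a,b$ evaluated at $\exp_{\xi}(\varepsilon z)$ times $V^{\xi}$ and its first derivatives, weighted by $g_{\xi}^{ij}(\varepsilon z)$ and $|g_{\xi}(\varepsilon z)|^{1/2}$. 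Note that, by the rescaling (\ref{eq:rescaling}) and the bounds $0<\inf_{M}\gamma\leq\sup_{M}\gamma<\infty$, $0<\inf_{M}A\leq\sup_{M}A<\infty$, the functions $V^{\xi}$ and $\nabla V^{\xi}$ decay exponentially, uniformly in $\xi$.

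For the $\mathcal{C}^{0}$ estimate I would insert the Taylor expansions (\ref{eq:espg1})--(\ref{eq:espg2}) of $g_{\xi}^{ij}(\varepsilon z)$ and $|g_{\xi}(\varepsilon z)|^{1/2}$ (equal to $\delta_{ij}$, resp. $1$, plus an $O(\varepsilon^{2}|z|^{2})$ remainder, uniformly in $\xi$, since in normal coordinates the first derivatives of the metric vanish at the center), the first-order expansions $c(\exp_{\xi}(\varepsilon z))=c(\xi)+\varepsilon\langle\nabla_{g}c(\xi),z\rangle+O(\varepsilon^{2}|z|^{2})$ and the analogous ones for $a,b$, and the fact that $\chi(\varepsilon z)\equiv1$ on $\{|z|\leq r/2\varepsilon\}$ with $\chi$ supported where $V^{\xi}$ is exponentially small. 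The linear-in-$z$ terms integrate to $0$ against the radial densities $|\nabla V^{\xi}|^{2}$, $(V^{\xi})^{2}$, $(V^{\xi})^{p}$ up to an exponentially small tail, and the quadratic remainders contribute $O(\varepsilon^{2})$; this yields the first displayed line of the statement, $\mathcal{C}^{0}$-uniformly in $\xi$, with an error even of order $\varepsilon^{2}$.

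For the $\mathcal{C}^{1}$-uniformity I parametrize $\xi=\xi(y):=\exp_{\xi_{0}}(y)$ and keep the change of variables $x=\exp_{\xi(y)}(\varepsilon z)$ \emph{tied to the moving point}: in the resulting integral over the $y$-independent domain $B(0,r/\varepsilon)$, the entire $y$-dependence is carried explicitly by the smooth maps $y\mapsto c(\exp_{\xi(y)}(\varepsilon z))$, $g_{\xi(y)}^{ij}(\varepsilon z)$, $|g_{\xi(y)}(\varepsilon z)|^{1/2}$, $V^{\xi(y)}(z)=\gamma(\xi(y))U(\sqrt{A(\xi(y))}z)$, and the $a,b$ analogues; this choice avoids the singular $\varepsilon^{-1}$ terms that appear in (\ref{eq:derWeps}) when one differentiates $W_{\varepsilon,\xi}$ through $\mathcal{E}(y,x)$. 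Differentiating under the integral sign and using the smoothness of $c,a,b$, of $\gamma$ and $A$, and of the exponential map, each factor's $y$-derivative equals its $\varepsilon=0$ counterpart plus a remainder which is $O(\varepsilon|z|)$ for the $c,a,b$-factors and $O(\varepsilon^{2}|z|^{2})$ for the metric factors, uniformly in $y$ and $\xi_{0}$; multiplying by the uniformly exponentially decaying densities and integrating, one obtains $\partial_{y_{h}}$ of the leading integral plus an $O(\varepsilon)$ error, which gives the first displayed line $\mathcal{C}^{1}$-uniformly.

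Finally, to reach the closed form I substitute (\ref{eq:rescaling}), with $A=a/c$ and $\gamma^{p-2}=a/b$, and change variables $w=\sqrt{A(\xi)}z$ in each of the three integrals over $\mathbb{R}^{n}$: using $c(\xi)A(\xi)=a(\xi)$ to merge the gradient and $L^{2}$ terms, the leading term becomes $\tfrac12 a(\xi)\gamma(\xi)^{2}A(\xi)^{-n/2}\big(\int_{\mathbb{R}^{n}}|\nabla U|^{2}+\int_{\mathbb{R}^{n}}U^{2}\big)-\tfrac1p b(\xi)\gamma(\xi)^{p}A(\xi)^{-n/2}\int_{\mathbb{R}^{n}}U^{p}$. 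Testing (\ref{eq:U}) with $U$ gives $\int_{\mathbb{R}^{n}}|\nabla U|^{2}+\int_{\mathbb{R}^{n}}U^{2}=\int_{\mathbb{R}^{n}}U^{p}$, and $\gamma^{p-2}=a/b$ gives $b(\xi)\gamma(\xi)^{p}=a(\xi)\gamma(\xi)^{2}$, so the leading term collapses to $\big(\tfrac12-\tfrac1p\big)a(\xi)\gamma(\xi)^{2}A(\xi)^{-n/2}\int_{\mathbb{R}^{n}}U^{p}$; the elementary identity $a\gamma^{2}A^{-n/2}=c^{n/2}a^{\frac{p}{p-2}-\frac{n}{2}}b^{-\frac{2}{p-2}}=\Gamma(\xi)$ together with $\tfrac12-\tfrac1p=\tfrac{p-2}{2p}$ yields the second displayed line. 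The only genuine work here is the $\mathcal{C}^{1}$ error control of the third paragraph; I expect that, rather than anything conceptual, to be the main hurdle, and I would argue it is manageable precisely because the moving-coordinate change of variables reduces it to bookkeeping with the uniform-in-base-point Taylor remainders of the metric and of $a,b,c$, with no $\varepsilon^{-1}$ singularities to absorb.
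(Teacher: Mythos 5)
Your proposal is correct: Step 1 (the closed form via (\ref{eq:rescaling}), $cA=a$, $b\gamma^{p-2}=a$ and testing (\ref{eq:U}) with $U$) and the $\mathcal{C}^{0}$ estimate coincide with the paper's Steps 1 and 2, but your treatment of the $\mathcal{C}^{1}$-uniformity is a genuinely different route from the paper's Step 3, which is the technical heart of the lemma and is carried out in the Appendix. The paper writes $\partial_{y_{h}}J_{\varepsilon}(W_{\varepsilon,\xi(y)})=J_{\varepsilon}'(W_{\varepsilon,\xi(y)})[\partial_{y_{h}}W_{\varepsilon,\xi(y)}]$ and differentiates $W_{\varepsilon,\xi(y)}$ through the fixed chart at $\xi_{0}$ via $\mathcal{E}(y,x)$, so formula (\ref{eq:derWeps}) produces terms of size $\varepsilon^{-1}$; these must then be shown to cancel, partly because the leading singular integrals are odd in $z_{1}$ (the terms $D_{2}$, $D_{3}$ and their analogues in $I_{2}$), and partly because the surviving first-order Taylor terms of $\tilde a,\tilde b,\tilde c$ recombine into $\partial_{y_{h}}$ of the limit functional --- a delicate bookkeeping computation, but one that reuses (\ref{eq:derWeps}) and Lemmas \ref{lem:6.1}--\ref{lem:6.3}, which the paper needs anyway for Proposition \ref{lem:tool1} and for Step 2 of Lemma \ref{lem:redc0}. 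You instead keep the normal coordinates attached to the moving point $\xi(y)$, so that over the fixed domain $B(0,r/\varepsilon)$ all the $y$-dependence sits in the smooth factors $c(\exp_{\xi(y)}(\varepsilon z))$, $g_{\xi(y)}^{ij}(\varepsilon z)$, $|g_{\xi(y)}(\varepsilon z)|^{1/2}$ and $V^{\xi(y)}(z)$, the cutoff $\chi(\varepsilon z)$ carries no $y$-dependence, and no $\varepsilon^{-1}$ ever appears; differentiation under the integral sign plus the uniform bounds $\partial_{y_{h}}[c\circ\exp_{\xi(y)}](\varepsilon z)=\partial_{y_{h}}c(\xi(y))+O(\varepsilon|z|)$ and $\partial_{y_{h}}g_{\xi(y)}^{ij}(\varepsilon z)=O(\varepsilon^{2}|z|^{2})$ (the latter because in normal coordinates the metric equals $\delta_{ij}$ to first order at the center for every base point) then gives the conclusion directly, with the symmetry cancellations of the Appendix becoming unnecessary for this lemma. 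Your argument is complete modulo two routine points you should make explicit: fix a smooth family of orthonormal frames (e.g.\ by parallel transport from $\xi_{0}$) so that $(y,w)\mapsto g_{\xi(y)}^{ij}(w)$ is smooth --- the representation of $J_{\varepsilon}(W_{\varepsilon,\xi(y)})$ is frame-independent since $V^{\xi}$ is radial --- and invoke the uniform exponential decay of $U$, $\nabla U$ and of $z\cdot\nabla U$ to justify differentiating under the integral and discarding the region where $\chi(\varepsilon z)\neq1$. (Your incidental claim that the $\mathcal{C}^{0}$ error is even $O(\varepsilon^{2})$ is also right, by radial symmetry of the densities, though only $O(\varepsilon)$ is needed.)
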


\begin{proof}
We perform the proof in three steps.

\textbf{Step1}: The equality
\begin{align*}
&  \frac{1}{2}\int_{\mathbb{R}^{n}}c(\xi)|\nabla V^{\xi}(z)|^{2}+a(\xi)\left(
V^{\xi}\right)  ^{2}(z)dz-\frac{1}{p}\int_{\mathbb{R}^{n}}b(\xi)\left(
V^{\xi}\right)  ^{p}(z)dz\\
&  =\left(  \frac{p-2}{2p}\int_{\mathbb{R}^{n}}U^{p}dz\right)  \frac
{c(\xi)^{\frac{n}{2}}a(\xi)^{\frac{p}{p-2}-\frac{n}{2}}}{b(\xi)^{\frac{2}%
{p-2}}}%
\end{align*}
holds true for every $\xi\in M.$

This follows by straightforward computation using (\ref{eq:rescaling}) and
(\ref{eq:U}).

\textbf{Step2}: \ The estimate
\[
{J_{\varepsilon}(W_{\varepsilon,\xi})=\frac{1}{2}\int_{\mathbb{R}^{n}}%
c(\xi)|\nabla V^{\xi}(z)|^{2}+a(\xi)\left(  V^{\xi}\right)  ^{2}(z)dz-\frac
{1}{p}\int_{\mathbb{R}^{n}}b(\xi)\left(  V^{\xi}\right)  ^{p}%
(z)dz+O(\varepsilon)}%
\]
holds true $\mathcal{C}^{0}$-uniformly with respect to $\xi\in M$ as
$\varepsilon\rightarrow0$.

Indeed, in normal coordinates we have
\begin{align*}
J_{\varepsilon}(W_{\varepsilon,\xi})=  &  \frac{1}{2}\int_{|z|<\frac
{r}{\varepsilon}}\sum_{i,j=1}^{n}g^{ij}(\varepsilon z)\tilde{c}(\varepsilon
z)\frac{\partial\left(  V^{\xi}(z)\chi_{r}(\varepsilon z)\right)  }{\partial
z_{i}}\frac{\partial\left(  V^{\xi}(z)\chi_{r}(\varepsilon z)\right)
}{\partial z_{j}}|g(\varepsilon z)|^{\frac{1}{2}}dz\\
&  +\frac{1}{2}\int_{|z|<\frac{r}{\varepsilon}}\tilde{a}(\varepsilon z)\left(
V^{\xi}(z)\chi_{r}(\varepsilon z)\right)  ^{2}|g(\varepsilon z)|^{\frac{1}{2}%
}dz\\
&  -\frac{1}{p}\int_{|z|<\frac{r}{\varepsilon}}\tilde{b}(\varepsilon z)\left(
V^{\xi}(z)\chi_{r}(\varepsilon z)\right)  ^{p}|g(\varepsilon z)|^{\frac{1}{2}%
}dz,
\end{align*}
where $\varepsilon z:=\exp_{\xi}^{-1}x,$ and $\tilde{c}(\varepsilon z):=c(x)$,
$\tilde{a}(\varepsilon z):=a(x)$ and $\tilde{b}(\varepsilon z):=b(x)$. Using
the expansions (\ref{eq:espg1}) and (\ref{eq:espg2}) and collecting terms of
the same order, we get
\[
J_{\varepsilon}(W_{\varepsilon,\xi})=\frac{1}{2}\int_{\mathbb{R}^{n}}%
c(\xi)|\nabla V^{\xi}(z)|^{2}+a(\xi)\left(  V^{\xi}\right)  ^{2}(z)dz-\frac
{1}{p}\int_{\mathbb{R}^{n}}b(\xi)\left(  V^{\xi}\right)  ^{p}%
(z)dz+O(\varepsilon),
\]
as claimed.

\textbf{Step 3:} \ The estimate
\begin{multline*}
\left.  \frac{\partial}{\partial y_{h}}J_{\varepsilon}(W_{\varepsilon,\xi
(y)})\right\vert _{y=0}=\\
=\left.  \frac{\partial}{\partial y_{h}}\left(  \frac{1}{2}\int_{\mathbb{R}%
^{n}}c(\xi(y))|\nabla V^{\xi(y)}|^{2}+a(\xi(y))\left(  V^{\xi(y)}\right)
^{2}dz-\frac{1}{p}\int_{\mathbb{R}^{n}}b(\xi(y))\left(  V^{\xi(y)}\right)
^{p}(z)dz\right)  \right\vert _{y=0}+O(\varepsilon)
\end{multline*}
holds true $\mathcal{C}^{0}$-uniformly with respect to $\xi\in M$ as
$\varepsilon\rightarrow0$ for all $h=1,\dots,n$. Here $\xi(y):=\exp_{\xi_{0}%
}(y)$ with $y\in B(0,r)$.

The proof of this statement is technical and we postpone it to Appendix
\ref{sec:appendix}
\end{proof}

\appendix

\section{Some technical facts}

\label{sec:appendix}Here we collect some technical facts we have used to prove
some of the results, and we give the missing proofs.

The proofs of Lemmas \ref{lem:6.1}, \ref{lem:6.2} and \ref{lem:6.3} are
similar to those of Lemmas 6.1, 6.2 and 6.3 of \cite{mp} and will just be sketched.

\begin{lemma}
\label{lem:6.1}The following estimates hold true
\[
\left\Vert \frac{\partial}{\partial y_{h}}Z_{\varepsilon,\xi(y)}%
^{l}\right\Vert _{\varepsilon}=O\left(  \frac{1}{\varepsilon}\right)
,\qquad\left\Vert \frac{\partial}{\partial y_{h}}W_{\varepsilon,\xi
(y)}\right\Vert _{\varepsilon}=O\left(  \frac{1}{\varepsilon}\right)  .
\]

\end{lemma}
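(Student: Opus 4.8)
The plan is to differentiate $W_{\varepsilon,\xi(y)}$ and $Z^l_{\varepsilon,\xi(y)}$ in the normal coordinates centred at $\xi_0$, as in (\ref{eq:derWeps}) and its obvious analogue for $Z^l$, and to bound the $\|\cdot\|_\varepsilon$-norm of each resulting term. Recalling that $V^\xi(z)=\gamma(\xi)U(\sqrt{A(\xi)}z)$ and $\psi^l_\xi=\partial_lV^\xi$, each of $W_{\varepsilon,\xi(y)}(x)$ and $Z^l_{\varepsilon,\xi(y)}(x)$ is a product of three factors: an amplitude depending on $\xi(y)$ only through $\gamma$ and $A$; the profile $U$ (resp.\ $\partial_lU$) evaluated at $\sqrt{\tilde A(y)}\,\mathcal E(y,x)/\varepsilon$; and the cut-off $\chi(\mathcal E(y,x))$. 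Applying $\partial/\partial y_h$ therefore produces three types of terms, according to which factor is hit, and the claim follows once each type is estimated.

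First I would isolate the dominant term, in which $\partial/\partial y_h$ falls on the rescaled argument $\mathcal E(y,x)/\varepsilon$ inside the profile. The chain rule extracts a factor $1/\varepsilon$ together with $\partial_{y_h}\mathcal E_k$, which by (\ref{eq:espE}) equals $-\delta_{hk}+O(\varepsilon^2|z|^2)$ along $x=\exp_{\xi_0}(\varepsilon z)$. Hence, at $y=0$, this term equals $-\tfrac{1}{\varepsilon}\,Z^h_{\varepsilon,\xi_0}$ (resp.\ $-\tfrac{1}{\varepsilon}$ times the rescaled and cut-off version of $\partial_h\psi^l_{\xi_0}$), up to a remainder generated by the $O(\varepsilon^2|z|^2)$ correction. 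Passing to the variable $z$ via $x=\exp_{\xi(y)}(\varepsilon z)$ and inserting the metric expansions (\ref{eq:espg1})--(\ref{eq:espg2}), the $\|\cdot\|_\varepsilon$-norm of this term is $\tfrac{1}{\varepsilon}$ times a quantity that converges, as $\varepsilon\to0$, to a finite multiple of $\|\psi^h_{\xi_0}\|_{H^1(\mathbb R^n)}$ (resp.\ of $\|\partial_h\psi^l_{\xi_0}\|_{H^1(\mathbb R^n)}$); these norms are finite and bounded uniformly in $\xi_0\in M$ because $U$ and its derivatives decay exponentially. This yields the bound $O(1/\varepsilon)$, the polynomial weight $|z|^2$ being absorbed by the exponential decay so that the correction only contributes lower-order terms.

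It then remains to see that the other two types of terms are $O(1)$, hence harmless. When $\partial/\partial y_h$ hits the amplitude ($\gamma$ or $A$), the corresponding derivative is bounded uniformly in $\xi$ (since $a,b,c\in\mathcal C^2(M)$ and $M$ is compact), and it multiplies a function with the same rescaled profile as $W_{\varepsilon,\xi}$ (resp.\ $Z^l_{\varepsilon,\xi}$), whose $\|\cdot\|_\varepsilon$-norm is $O(1)$ by the very computation used in Proposition \ref{prop:Zi}; in the subcase where $\partial/\partial y_h$ hits $A$ through the profile, the extra $1/\varepsilon$ from the chain rule is compensated by the accompanying factor $\mathcal E=O(\varepsilon|z|)$, again leaving $O(1)$. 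When $\partial/\partial y_h$ hits $\chi(\mathcal E(y,x))$, the factor $\nabla\chi(\mathcal E(y,x))$ is supported where $r/2\le|\mathcal E(y,x)|\le r$, i.e.\ where the argument of the profile has modulus of order $r/\varepsilon$, so the exponential decay makes this term, and its $\|\cdot\|_\varepsilon$-norm, exponentially small. Summing the three types gives $\|\partial_{y_h}W_{\varepsilon,\xi(y)}\|_\varepsilon=O(1/\varepsilon)$ and $\|\partial_{y_h}Z^l_{\varepsilon,\xi(y)}\|_\varepsilon=O(1/\varepsilon)$.

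The step demanding the most care is the bookkeeping of $\varepsilon$-powers under the rescaling $x=\exp_{\xi(y)}(\varepsilon z)$: the volume element contributes $\varepsilon^n$, each spatial derivative of $W$ or $Z$ contributes $\varepsilon^{-1}$, and the weights $\varepsilon^{-(n-2)}$ and $\varepsilon^{-n}$ built into $\|\cdot\|_\varepsilon^2$ must combine so that the dominant term sits exactly at order $\varepsilon^{-2}$ in $\|\cdot\|_\varepsilon^2$ while everything else is of lower order. This is routine and entirely parallel to the proof of Lemma 6.1 in \cite{mp}, which is why I would present it only in sketched form.
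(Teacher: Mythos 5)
Your proposal is correct and follows exactly the route the paper intends: the paper's proof of Lemma \ref{lem:6.1} is simply "direct computation" (modelled on Lemma 6.1 of \cite{mp}), and your computation — differentiating via (\ref{eq:derWeps}), isolating the $1/\varepsilon$ term coming from the rescaled profile argument using (\ref{eq:espE}), and checking that the amplitude and cut-off terms are $O(1)$ or exponentially small under the rescaling and the metric expansions (\ref{eq:espg1})--(\ref{eq:espg2}) — is precisely that computation carried out in detail.
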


\begin{proof}
The proof follows by direct computation.
\end{proof}

\begin{lemma}
\label{lem:6.2}The following estimates hold true
\[
\left\langle Z_{\varepsilon,\xi_{0}}^{l},\left.  \frac{\partial}{\partial
y_{h}}W_{\varepsilon,\xi(y)}\right\vert _{y=0}\right\rangle _{\varepsilon
}=-\frac{1}{\varepsilon}C\delta_{hl}+o\left(  \frac{1}{\varepsilon}\right)  ,
\]
where $C\in\mathbb{R}$ is a suitable constant.
\end{lemma}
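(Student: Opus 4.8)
The plan is to show that, read in a normal chart around $\xi_0$, one has
\[
\left.\frac{\partial}{\partial y_h}W_{\varepsilon,\xi(y)}\right|_{y=0}=-\frac1\varepsilon\,Z_{\varepsilon,\xi_0}^h+\mathcal R_{\varepsilon,h},\qquad \|\mathcal R_{\varepsilon,h}\|_\varepsilon=o\!\left(\tfrac1\varepsilon\right),
\]
and then to conclude by pairing with $Z_{\varepsilon,\xi_0}^l$ and invoking Proposition \ref{prop:Zi}.

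First I would evaluate \eqref{eq:derWeps} at $y=0$, where $\mathcal E(0,x)=\exp_{\xi_0}^{-1}(x)$. Writing $x=\exp_{\xi_0}(\varepsilon z)$, so $\mathcal E(0,x)=\varepsilon z$, and using the Taylor expansion \eqref{eq:espE}, $\frac{\partial}{\partial y_h}\mathcal E_k(0,\exp_{\xi_0}(\varepsilon z))=-\delta_{hk}+O(\varepsilon^2|z|^2)$, the only term carrying a genuine $1/\varepsilon$ factor is the one in which the inner chart-derivative $\frac1\varepsilon(\partial_k V^{\xi_0})(\mathcal E/\varepsilon)=\frac1\varepsilon\psi_{\xi_0}^k(\mathcal E/\varepsilon)$ is multiplied by $-\delta_{hk}$; this is exactly $-\frac1\varepsilon\psi_{\xi_0}^h(\exp_{\xi_0}^{-1}(x)/\varepsilon)\chi(\exp_{\xi_0}^{-1}(x))=-\frac1\varepsilon Z_{\varepsilon,\xi_0}^h(x)$. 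Every remaining contribution is of the form ``a fixed, exponentially decaying profile in $z$, multiplied by $\chi(\varepsilon z)$ and read in the normal chart'': the term where $\partial_{y_h}$ falls on $\tilde\gamma$, the term where it falls on the dilation $\sqrt{\tilde A(y)}$ (there the prefactor $1/\varepsilon$ is compensated by the factor $\mathcal E=\varepsilon z$ produced by the chain rule), and, after multiplication by $\varepsilon$, the $O(\varepsilon^2|z|^2)$ remainder from \eqref{eq:espE}; the term in which $\partial_{y_h}$ falls on $\chi$ is supported in the shell $\{r/2\le|\exp_{\xi_0}^{-1}(x)|\le r\}$, hence is exponentially small in $1/\varepsilon$ in every reasonable norm.

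Next I would record that any ``fixed decaying profile read in a normal chart'' has $\|\cdot\|_\varepsilon$-norm $O(1)$: this is precisely the change-of-variables plus Taylor-expansion computation already carried out in the proof of Proposition \ref{prop:Zi}, the weights $\varepsilon^{-(n-2)}$ and $\varepsilon^{-n}$ being exactly cancelled by the Jacobian $\varepsilon^n$ of the rescaling $x=\exp_{\xi_0}(\varepsilon z)$ together with the $\varepsilon^{-2}$ coming from $\varepsilon^2|\nabla_g\cdot|^2$. This yields the displayed decomposition with $\|\mathcal R_{\varepsilon,h}\|_\varepsilon=O(1)=o(1/\varepsilon)$ (consistently with Lemma \ref{lem:6.1}).

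Finally I would pair with $Z_{\varepsilon,\xi_0}^l$. Since $\|Z_{\varepsilon,\xi_0}^l\|_\varepsilon^2=\langle Z_{\varepsilon,\xi_0}^l,Z_{\varepsilon,\xi_0}^l\rangle_\varepsilon=C+o(1)$ by Proposition \ref{prop:Zi}, the Cauchy--Schwarz inequality gives $\langle Z_{\varepsilon,\xi_0}^l,\mathcal R_{\varepsilon,h}\rangle_\varepsilon=o(1/\varepsilon)$, while $\langle Z_{\varepsilon,\xi_0}^l,Z_{\varepsilon,\xi_0}^h\rangle_\varepsilon=C\delta_{hl}+o(1)$, again by Proposition \ref{prop:Zi}. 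Hence
\[
\left\langle Z_{\varepsilon,\xi_0}^l,\left.\tfrac{\partial}{\partial y_h}W_{\varepsilon,\xi(y)}\right|_{y=0}\right\rangle_\varepsilon=-\frac1\varepsilon\bigl(C\delta_{hl}+o(1)\bigr)+o\!\left(\tfrac1\varepsilon\right)=-\frac{C}{\varepsilon}\,\delta_{hl}+o\!\left(\tfrac1\varepsilon\right),
\]
which is the claim. The only real work is the bookkeeping in the first step: one must verify that \emph{every} way of differentiating $W_{\varepsilon,\xi(y)}$ other than through the inner exponential-chart variable yields a quantity that is $O(1)$ (or smaller) in $\|\cdot\|_\varepsilon$, hence negligible against the $1/\varepsilon$ main term.
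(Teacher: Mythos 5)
Your argument is correct and is essentially the computation the paper sketches: it uses (\ref{eq:derWeps}) together with the expansion (\ref{eq:espE}) of $\partial_{y_h}\mathcal{E}_k$ to isolate the $-\frac{1}{\varepsilon}Z_{\varepsilon,\xi_0}^h$ term, and then Proposition \ref{prop:Zi} to evaluate the leading inner product. Your intermediate decomposition $\left.\partial_{y_h}W_{\varepsilon,\xi(y)}\right|_{y=0}=-\frac{1}{\varepsilon}Z_{\varepsilon,\xi_0}^h+O(1)$ is just a (weaker form of the) statement the paper records separately as Lemma \ref{lem:6.3}, so pairing it with $Z_{\varepsilon,\xi_0}^l$ via Cauchy--Schwarz is the same route, merely repackaged.
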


\begin{proof}
The proof follows from the definitions of $Z_{\varepsilon,\xi_{0}}^{l}$ and
$W_{\varepsilon,\xi(y)}$ and the Taylor expansion of $\frac{\partial
\mathcal{E}_{k}}{\partial y_{h}}$.
\end{proof}

\begin{lemma}
\label{lem:5.2}There exists $C>0$ such that, for $\varepsilon$ small enough,
\[
\Vert Z_{\varepsilon,\xi}^{h}-i_{\varepsilon}^{\ast}[b(x)f^{\prime
}(W_{\varepsilon,\xi(y)})Z_{\varepsilon,\xi}^{h}\Vert_{\varepsilon}\leq
C\varepsilon
\]
for all $\xi\in M,$ $h=1,\dots,n$.
\end{lemma}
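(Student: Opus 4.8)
The plan is to estimate the quantity
\[
\Vert Z_{\varepsilon,\xi}^{h}-i_{\varepsilon}^{\ast}[b(x)f^{\prime
}(W_{\varepsilon,\xi})Z_{\varepsilon,\xi}^{h}]\Vert_{\varepsilon}
\]
by the same device used in Lemma \ref{lem:3.3}: introduce the function $H_{\varepsilon,\xi}$ characterized by $Z_{\varepsilon,\xi}^{h}=i_{\varepsilon}^{\ast}(b(x)H_{\varepsilon,\xi})$, i.e.
\[
-\varepsilon^{2}\Delta_{g}Z_{\varepsilon,\xi}^{h}+A(x)Z_{\varepsilon,\xi}^{h}=B(x)H_{\varepsilon,\xi},
\]
and then use the continuity of $i_{\varepsilon}^{\ast}$ (inequality (\ref{eq:istar})) to reduce the $H_{\varepsilon}$-norm estimate to the bound
\[
\Vert Z_{\varepsilon,\xi}^{h}-i_{\varepsilon}^{\ast}[b(x)f^{\prime}(W_{\varepsilon,\xi})Z_{\varepsilon,\xi}^{h}]\Vert_{\varepsilon}\leq C\,\bigl|\,(p-1)W_{\varepsilon,\xi}^{p-2}Z_{\varepsilon,\xi}^{h}-H_{\varepsilon,\xi}\,\bigr|_{p',\varepsilon}.
\]
So the whole problem becomes an $L^{p'}_\varepsilon$ estimate of the discrepancy $(p-1)W_{\varepsilon,\xi}^{p-2}Z_{\varepsilon,\xi}^{h}-H_{\varepsilon,\xi}$.

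First I would compute $H_{\varepsilon,\xi}$ explicitly in normal coordinates at $\xi$. Writing $\eta=\exp_{\xi}^{-1}(x)$ and using formula (\ref{eq:Laplcoord}) for $\Delta_g$ together with the fact that $Z_{\varepsilon,\xi}^h(\exp_\xi\eta)=\psi_\xi^h(\eta/\varepsilon)\,\chi(\eta)$, I would expand $-\varepsilon^2\Delta_g Z_{\varepsilon,\xi}^{h}+A(x)Z_{\varepsilon,\xi}^{h}$. The leading term is $-\Delta\psi_\xi^h+A(\xi)\psi_\xi^h$ read at scale $1/\varepsilon$, which by the linearized equation (\ref{eq:lin}) equals $(p-1)B(\xi)(V^\xi)^{p-2}\psi_\xi^h$; dividing by $B(x)$ this produces exactly $(p-1)W_{\varepsilon,\xi}^{p-2}Z_{\varepsilon,\xi}^{h}$ up to the replacement of $B(\xi),(V^\xi)^{p-2}$ by $B(x),(V_\varepsilon^\xi\chi)^{p-2}$. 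The remaining terms are: (i) the cut-off error terms $-\varepsilon^2\psi_{\varepsilon,\xi}^h\Delta\chi-2\varepsilon^2\nabla\psi_{\varepsilon,\xi}^h\nabla\chi$, which are supported in $r/2\le|\eta|\le r$ and hence exponentially small in $\varepsilon$ by the decay of $\psi_\xi^h$ and its derivatives; (ii) the metric-perturbation terms $-\varepsilon^2(g_\xi^{ij}-\delta_{ij})\partial_{ij}(\psi_{\varepsilon,\xi}^h\chi)+\varepsilon^2 g_\xi^{ij}\Gamma_{ij}^k\partial_k(\psi_{\varepsilon,\xi}^h\chi)$; and (iii) the coefficient-variation terms $[A(\exp_\xi\eta)-A(\xi)]\psi_{\varepsilon,\xi}^h\chi$ and $[(p-1)B(\xi)(V^\xi)^{p-2}-(p-1)B(x)(V_\varepsilon^\xi\chi)^{p-2}]\psi_{\varepsilon,\xi}^h$ coming from freezing the coefficients.

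Then I would estimate the $L^{p'}_\varepsilon$ norm of each of these terms exactly as in Lemma \ref{lem:3.3}, changing variables $\eta=\varepsilon z$ so that each $\int_{B(0,r)}|\cdots|^{p'}d\eta$ becomes $\varepsilon^n\int_{B(0,r/\varepsilon)}|\cdots|^{p'}dz$. For the metric terms one uses the Taylor expansions (\ref{eq:espg1}), $g_\xi^{ij}(\varepsilon z)-\delta_{ij}=O(\varepsilon^2|z|^2)$, and $\Gamma_{ij}^k(\varepsilon z)=O(\varepsilon|z|)$, which together with the factor $\varepsilon^2$ out front and the exponential decay of $\psi_\xi^h$ (and its first and second derivatives) give contributions of order $\varepsilon^{n+p'}$ to the integral $|\cdot|_{p',\varepsilon}^{p'}$ after dividing by $\varepsilon^n$; likewise the coefficient-variation terms are $O(\varepsilon)$ pointwise on the relevant scale since $a,b,c\in\mathcal C^2$ and $A(\exp_\xi\varepsilon z)-A(\xi)=O(\varepsilon|z|)$, $V^\xi$ smooth, again with exponential decay controlling the $z$-integral. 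Hence $|(p-1)W_{\varepsilon,\xi}^{p-2}Z_{\varepsilon,\xi}^{h}-H_{\varepsilon,\xi}|_{p',\varepsilon}=O(\varepsilon)$, uniformly in $\xi\in M$ because all the constants (injectivity radius, $\mathcal C^2$-bounds on $a,b,c$, decay constants for $U$) are uniform on the compact manifold $M$. This yields the claimed bound $C\varepsilon$.

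The main obstacle is bookkeeping rather than anything conceptual: one must be careful that the term $(p-1)B(x)(V_\varepsilon^\xi(\eta)\chi(\eta))^{p-2}\psi_{\varepsilon,\xi}^h(\eta)$ genuinely cancels $(p-1)W_{\varepsilon,\xi}^{p-2}Z_{\varepsilon,\xi}^{h}$ up to an $O(\varepsilon)$ error — this requires noting $W_{\varepsilon,\xi}^{p-2}=(V_\varepsilon^\xi\chi)^{p-2}$ on the support and that replacing $(V_\varepsilon^\xi\chi)^{p-2}$ by $(V^\xi)^{p-2}$ in the linearized-equation term, and $B(x)$ by $B(\xi)$, costs only the exponentially small cut-off region and an $O(\varepsilon)$ coefficient error respectively. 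One also needs the elementary pointwise inequality $|f'(s)|=(p-1)|s|^{p-2}$ to control $f'(W_{\varepsilon,\xi})Z_{\varepsilon,\xi}^h$ itself, but since $p-2$ may be less than $1$ no composition issues arise. All remaining estimates are the routine change-of-variables-plus-Taylor-expansion computations already carried out in Lemma \ref{lem:3.3}, so I would simply remark that the proof is "similar to that of Lemma \ref{lem:3.3}" and indicate the two or three places where $\psi_\xi^h$ and the linearized equation (\ref{eq:lin}) replace $V^\xi$ and equation (\ref{eq:Vxi}).
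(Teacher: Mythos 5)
Your proposal follows essentially the same route as the paper's proof: you introduce the auxiliary function (the paper's $G_{\varepsilon,\xi}$, your $H_{\varepsilon,\xi}$) with $Z_{\varepsilon,\xi}^{h}=i_{\varepsilon}^{\ast}(b(x)G_{\varepsilon,\xi})$, expand it in normal coordinates via (\ref{eq:Laplcoord}) and the linearized equation (\ref{eq:lin}), and then estimate the cut-off, metric and coefficient-variation discrepancies in $L^{p'}_{\varepsilon}$ exactly as in Lemma \ref{lem:3.3}. This is correct and matches the paper's argument.
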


\begin{proof}
Let $G_{\varepsilon,\xi}$ be a function such that $Z_{\varepsilon,\xi}%
^{h}(x)=i_{\varepsilon}^{\ast}(b(x)G_{\varepsilon,\xi})$, i.e.
\[
-\varepsilon^{2}\Delta_{g}Z_{\varepsilon,\xi}^{h}+A(x)Z_{\varepsilon,\xi}%
^{h}=B(x)G_{\varepsilon,\xi}.
\]
Thus, using (\ref{eq:Laplcoord}) we have
\begin{align*}
b(x)G_{\varepsilon,\xi}  &  =-\varepsilon^{2}\Delta_{g}Z_{\varepsilon,\xi
}+A(x)Z_{\varepsilon,\xi}\\
=  &  -\varepsilon^{2}\psi_{\xi}^{h}(\eta/\varepsilon)\Delta\chi
(\eta)-2\varepsilon^{2}\nabla\psi_{\xi}^{h}(\eta/\varepsilon)\nabla\chi
(\eta)-\varepsilon^{2}(g_{\xi}^{ij}-\delta_{ij})\partial_{ij}(\psi_{\xi}%
^{h}(\eta/\varepsilon)\chi(\eta))\\
&  +\varepsilon^{2}g_{\xi}^{ij}\Gamma_{ij}^{k}\partial_{k}(\psi_{\xi}^{h}%
(\eta/\varepsilon)\chi(\eta))+[A(\exp_{\xi}(\eta))-a(\xi)]\psi_{\xi}^{h}%
(\eta/\varepsilon)\chi(\eta)\\
&  +(p-1)B(\xi)\left(  V_{\varepsilon}^{\xi}(\eta)\right)  ^{p-2}\psi_{\xi
}^{h}(\eta/\varepsilon)\chi(\eta),
\end{align*}
by (\ref{eq:lin}). Now the proof follows as in Lemma \ref{lem:3.3}.
\end{proof}

\begin{lemma}
\label{lem:6.3}There exists $C>0$ such that, for $\varepsilon$ small enough,
\[
\left\Vert \frac{\partial}{\partial y_{h}}W_{\varepsilon,\xi(y)}+\frac
{1}{\varepsilon}Z_{\varepsilon,\xi(y)}^{h}\right\Vert _{\varepsilon}\leq
C\varepsilon
\]
for all $\xi_{0}\in M,$ $h=1,\dots,n$.
\end{lemma}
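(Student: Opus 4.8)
The proof follows the scheme of Lemma 6.3 in \cite{mp}, the only genuinely new feature being that here the limit profile $V^{\xi}$ depends on the concentration point through $\gamma$ and $A$. Write $\xi=\xi(y)=\exp_{\xi_{0}}(y)$; it is enough to estimate the $\|\cdot\|_{\varepsilon}$-norm of $\frac{\partial}{\partial y_{h}}W_{\varepsilon,\xi(y)}+\frac{1}{\varepsilon}Z_{\varepsilon,\xi(y)}^{h}$ at $y=0$, uniformly in $\xi_{0}\in M$ and $h$. I would start from the explicit expression (\ref{eq:derWeps}) for $\frac{\partial}{\partial y_{h}}W_{\varepsilon,\xi(y)}$ and split it into three groups: (i) the terms produced by differentiating the profile in $\xi$, i.e.\ those carrying $\frac{\partial}{\partial y_{h}}\tilde{\gamma}(y)$ or $\frac{\partial}{\partial y_{h}}\sqrt{\tilde{A}(y)}$; (ii) the cut-off term, carrying $\frac{\partial\chi}{\partial z_{k}}$; and (iii) the dominant term $\frac{1}{\varepsilon}\tilde{\gamma}(y)\chi(\mathcal{E})\,\frac{\partial}{\partial z_{k}}\bigl(U_{\varepsilon}(\sqrt{\tilde{A}(y)}\,\mathcal{E}(y,x))\bigr)\,\frac{\partial}{\partial y_{h}}\mathcal{E}_{k}(y,x)$.

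The core is (iii). Writing the points of $B_{g}(\xi_{0},r)$ as $x=\exp_{\xi_{0}}(\varepsilon z)$, so that $\mathcal{E}(0,x)=\varepsilon z$, and recalling that $Z_{\varepsilon,\xi(y)}^{h}(x)=\psi_{\xi(y)}^{h}\bigl(\tfrac{1}{\varepsilon}\mathcal{E}(y,x)\bigr)\chi(\mathcal{E}(y,x))$ with $\psi_{\xi}^{h}(\eta)=\frac{\partial}{\partial\eta_{h}}V^{\xi}(\eta)$, one recognizes that (iii) equals $-\frac{1}{\varepsilon}Z_{\varepsilon,\xi(y)}^{h}$ plus $\frac{1}{\varepsilon}\,\Theta_{\varepsilon}(z)\bigl(\frac{\partial}{\partial y_{h}}\mathcal{E}_{k}(0,\exp_{\xi_{0}}(\varepsilon z))+\delta_{hk}\bigr)$, where $\Theta_{\varepsilon}$ is assembled from $\tilde{\gamma}$, $\sqrt{\tilde{A}}$ and $(\partial_{k}U)(\sqrt{\tilde{A}}\,z)$ and hence, with its first derivatives, decays exponentially in $|z|$. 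The Taylor expansion (\ref{eq:espE}) gives $\frac{\partial}{\partial y_{h}}\mathcal{E}_{k}(0,\exp_{\xi_{0}}(\varepsilon z))+\delta_{hk}=O(\varepsilon^{2}|z|^{2})$, so this second summand is $\varepsilon$ times a function whose profile and gradient decay exponentially.

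I would then estimate each piece in $\|\cdot\|_{\varepsilon}$ by the change of variables $x=\exp_{\xi_{0}}(\varepsilon z)$ and the expansions (\ref{eq:espg1})--(\ref{eq:espg2}), exactly as in Proposition \ref{prop:Zi} and Lemma \ref{lem:3.3}: for a concentrating profile $\phi(x)=\phi_{0}\bigl(\tfrac{1}{\varepsilon}\exp_{\xi_{0}}^{-1}(x)\bigr)\chi(\exp_{\xi_{0}}^{-1}(x))$ with $\phi_{0}$ and $\nabla\phi_{0}$ fast decaying one has $\|\phi\|_{\varepsilon}^{2}=\int_{\mathbb{R}^{n}}\bigl(c(\xi_{0})|\nabla\phi_{0}|^{2}+a(\xi_{0})\phi_{0}^{2}\bigr)\,dz+o(1)$, so an overall prefactor $\varepsilon$ on $\phi_{0}$ produces an $O(\varepsilon)$ norm; this disposes of the second summand of (iii). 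The cut-off term (ii) is supported in the annulus $r/2\le|\mathcal{E}|\le r$, i.e.\ $|z|\ge r/(2\varepsilon)$, where $U_{\varepsilon}(\sqrt{\tilde{A}}\,\mathcal{E})$ and its derivatives are $O(e^{-c/\varepsilon})$, so its norm is $O(\varepsilon^{-n/2}e^{-c/\varepsilon})=o(\varepsilon^{N})$ for every $N$.

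The remaining, and most delicate, piece is group (i): the profile-variation terms, which do not occur in the constant-coefficient situation of \cite{mp}. I would control them by the same rescaling, using that $a,b,c\in\mathcal{C}^{2}$ --- so that $\tilde{\gamma}$ and $\tilde{A}$ are $\mathcal{C}^{1}$ with bounds uniform in $\xi_{0}$ --- together with the exponential decay of $V^{\xi}$ and its derivatives, in the same spirit as the estimate of the remainder $R_{\varepsilon,\xi}$ in Lemma \ref{lem:3.3}. I expect this group to be the main obstacle; once it is shown to contribute at most $C\varepsilon$, collecting (i), (ii) and the remainder of (iii) gives $\bigl\|\frac{\partial}{\partial y_{h}}W_{\varepsilon,\xi(y)}+\frac{1}{\varepsilon}Z_{\varepsilon,\xi(y)}^{h}\bigr\|_{\varepsilon}\le C\varepsilon$.
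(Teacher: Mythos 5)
Your decomposition (i)--(iii) and your treatment of (ii) and (iii) follow exactly the route the paper sketches (plug in (\ref{eq:derWeps}), use the expansion (\ref{eq:espE}) of $\partial\mathcal{E}_k/\partial y_h$, rescale as in Proposition \ref{prop:Zi}), so that part is fine. The genuine gap is precisely the step you defer: group (i), the profile-variation terms, cannot be shown to contribute $O(\varepsilon)$ by the strategy you propose, because they are in fact of order one. At $y=0$ these terms read, in the rescaled variable $z$,
\[
\Big(\tfrac{\partial}{\partial y_h}\tilde{\gamma}\Big)(0)\,U(\sqrt{\tilde{A}(0)}\,z)\,\chi(\varepsilon z)
\;+\;\tilde{\gamma}(0)\Big(\tfrac{\partial}{\partial y_h}\sqrt{\tilde{A}}\Big)(0)\,\nabla U(\sqrt{\tilde{A}(0)}\,z)\cdot z\,\chi(\varepsilon z),
\]
and by the same computation as in Proposition \ref{prop:Zi} the $\Vert\cdot\Vert_{\varepsilon}$-norm of each summand converges to a positive constant (proportional to $|\nabla_g\gamma(\xi_0)|$, $|\nabla_g A(\xi_0)|$), not to zero. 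The analogy with Lemma \ref{lem:3.3} does not help: there the gain of $\varepsilon$ comes from Taylor expanding the coefficients at the concentration scale, $A(\exp_{\xi}(\varepsilon z))-A(\xi)=O(\varepsilon|z|)$, whereas here the $y$-derivative of $\tilde{\gamma}$ and $\sqrt{\tilde{A}}$ is evaluated at the center and carries no small factor; $\mathcal{C}^2$-regularity only gives boundedness. So your scheme proves $\Vert\partial_{y_h}W_{\varepsilon,\xi(y)}+\tfrac{1}{\varepsilon}Z^h_{\varepsilon,\xi(y)}\Vert_{\varepsilon}=O(1)$, not $O(\varepsilon)$, unless $a,b,c$ are constant.

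It is worth noting that the paper's own proof is only a two-line sketch referring to \cite{mp}, where $a\equiv b\equiv c\equiv1$, so $V^{\xi}=U$ is independent of $\xi$ and group (i) is simply absent; in that setting the $C\varepsilon$ bound is correct, but with variable coefficients the statement as written is stronger than what the translation-only comparison with $\tfrac{1}{\varepsilon}Z^h$ can deliver. Two repairs are available to you: either prove and use the weaker bound $O(1)$ --- which suffices for the only place Lemma \ref{lem:6.3} is invoked, namely the estimate of $I_1$ in Step 2 of Lemma \ref{lem:redc0}, since there it is multiplied by $\Vert\phi_{\varepsilon,\xi}-i_{\varepsilon}^{\ast}[b\,f'(W_{\varepsilon,\xi})\phi_{\varepsilon,\xi}]\Vert_{\varepsilon}=O(\varepsilon)$ --- or enlarge the comparison function, replacing $\tfrac{1}{\varepsilon}Z^h_{\varepsilon,\xi}$ by the full $\xi$-derivative of the ansatz (i.e.\ including the terms generated by $\partial_{y_h}\tilde{\gamma}$ and $\partial_{y_h}\sqrt{\tilde{A}}$), after which your estimates for (ii) and the remainder of (iii) do give $O(\varepsilon)$. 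As it stands, however, your proposal leaves the decisive step unproved, and the way you plan to prove it would fail.
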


\begin{proof}
The proof follows from the definitons of $Z_{\varepsilon,\xi_{0}}^{l}$ and
$W_{\varepsilon,\xi(y)}$ and the Taylor expansion of $\frac{\partial
\mathcal{E}_{k}}{\partial y_{h}}$.
\end{proof}

\begin{proof}
[Proof of Lemma \ref{lem:redc0}, Step 2.]We have
\begin{align*}
\left.  \frac{\partial}{\partial y_{h}}J_{\varepsilon}(W_{\varepsilon,\xi
(y)}+\phi_{\varepsilon,\xi(y)})-J_{\varepsilon}(W_{\varepsilon,\xi
(y)})\right\vert _{y=0} &  =J_{\varepsilon}^{\prime}(W_{\varepsilon,\xi}%
+\phi_{\varepsilon,\xi})\left[  \left.  \frac{\partial}{\partial y_{h}}\left(
W_{\varepsilon,\xi(y)}+\phi_{\varepsilon,\xi(y)}\right)  \right\vert
_{y=0}\right]  \\
&  \qquad-J_{\varepsilon}^{\prime}(W_{\varepsilon,\xi})\left[  \left.
\frac{\partial}{\partial y_{h}}W_{\varepsilon,\xi(y)}\right\vert
_{y=0}\right]  \\
&  =\left(  J_{\varepsilon}^{\prime}(W_{\varepsilon,\xi}+\phi_{\varepsilon
,\xi})-J_{\varepsilon}^{\prime}(W_{\varepsilon,\xi})\right)  \left[  \left.
\frac{\partial}{\partial y_{h}}W_{\varepsilon,\xi(y)}\right\vert
_{y=0}\right]  \\
&  \qquad+J_{\varepsilon}^{\prime}(W_{\varepsilon,\xi}+\phi_{\varepsilon,\xi
})\left[  \left.  \frac{\partial}{\partial y_{h}}\phi_{\varepsilon,\xi
(y)}\right\vert _{y=0}\right]  .
\end{align*}
In light of (\ref{eq:red1}), the last term is
\[
J_{\varepsilon}^{\prime}(W_{\varepsilon,\xi}+\phi_{\varepsilon,\xi})\left[
\left.  \frac{\partial}{\partial y_{h}}\phi_{\varepsilon,\xi(y)}\right\vert
_{y=0}\right]  =\sum_{l=i}^{n}C_{\varepsilon}^{l}\left\langle Z_{\varepsilon
,\xi}^{l},\left.  \frac{\partial}{\partial y_{h}}\phi_{\varepsilon,\xi
(y)}\right\vert _{y=0}\right\rangle _{\varepsilon}.
\]
Since $\phi_{\varepsilon,\xi}\in K_{\varepsilon,\xi}^{\bot}$, we have
\[
\left\langle Z_{\varepsilon,\xi(y)}^{l},\frac{\partial}{\partial y_{h}}%
\phi_{\varepsilon,\xi(y)}\right\rangle _{\varepsilon}=-\left\langle
\frac{\partial}{\partial y_{h}}Z_{\varepsilon,\xi(y)}^{l},\phi_{\varepsilon
,\xi(y)}\right\rangle _{\varepsilon}=O\left(  \left\Vert \frac{\partial
}{\partial y_{h}}Z_{\varepsilon,\xi(y)}^{l}\right\Vert _{\varepsilon}\cdot
\phi_{\varepsilon,\xi(y)}\right)  =O(1)
\]
by Lemma \ref{lem:6.1}. Moreover, we claim that
\begin{equation}
\sum_{l=1}^{n}|C_{\varepsilon}^{l}|=O(\varepsilon).\label{eq:Ceps}%
\end{equation}
Indeed, since $\phi_{\varepsilon,\xi}\in K_{\varepsilon,\xi}^{\bot},$
\begin{align*}
J_{\varepsilon}^{\prime}(W_{\varepsilon,\xi}+\phi_{\varepsilon,\xi
})[Z_{\varepsilon,\xi}^{l}] &  =\left\langle W_{\varepsilon,\xi}%
,Z_{\varepsilon,\xi}^{l}\right\rangle _{\varepsilon}-\frac{1}{\varepsilon^{n}%
}\int b(x)f(W_{\varepsilon,\xi})Z_{\varepsilon,\xi}^{l}\\
&  +\frac{1}{\varepsilon^{n}}\int b(x)\left(  f(W_{\varepsilon,\xi}%
+\phi_{\varepsilon,\xi})-f(W_{\varepsilon,\xi})\right)  Z_{\varepsilon,\xi
}^{l}.
\end{align*}
Now, from the exponantial decay of $V^{\xi}$ and its derivatives, and the
expantion of $g^{ij}$ and $|g|,$ we get
\begin{multline*}
\left\langle W_{\varepsilon,\xi},Z_{\varepsilon,\xi}^{l}\right\rangle
_{\varepsilon}-\frac{1}{\varepsilon^{n}}\int b(x)f(W_{\varepsilon,\xi
})Z_{\varepsilon,\xi}^{l}\\
=\frac{1}{2}\int_{\mathbb{R}^{n}}c(x)\nabla V^{\xi}\nabla\psi_{\xi}%
^{l}+a(x)V^{\xi}\psi_{\xi}^{l}-\frac{1}{p}\int_{\mathbb{R}^{n}}b(x)(V^{\xi
})^{p-1}\psi_{\xi}^{l}+O(\varepsilon^{2})=O(\varepsilon^{2})
\end{multline*}
and, by the mean value theorem, for some $t\in(0,1)$ we have that
\begin{multline*}
\frac{1}{\varepsilon^{n}}\left\vert \int\left(  f(W_{\varepsilon,\xi}%
+\phi_{\varepsilon,\xi})-f(W_{\varepsilon,\xi})\right)  Z_{\varepsilon,\xi
}^{l}\right\vert =\frac{1}{\varepsilon^{n}}\left\vert \int f^{\prime
}(W_{\varepsilon,\xi}+t\phi_{\varepsilon,\xi})\phi_{\varepsilon,\xi
}Z_{\varepsilon,\xi}^{l}\right\vert \\
\leq C|W_{\varepsilon,\xi}|_{p,\varepsilon}^{p-2}|\phi_{\varepsilon,\xi
}|_{p,\varepsilon}|Z_{\varepsilon,\xi}^{l}|_{p,\varepsilon}+C|\phi
_{\varepsilon,\xi}|_{p,\varepsilon}^{p-1}|Z_{\varepsilon,\xi}^{l}%
|_{p,\varepsilon}=O(\varepsilon),
\end{multline*}
Thus, (\ref{eq:Ceps}) holds true and
\[
J_{\varepsilon}^{\prime}(W_{\varepsilon,\xi}+\phi_{\varepsilon,\xi})\left[
\left.  \frac{\partial}{\partial y_{h}}\phi_{\varepsilon,\xi(y)}\right\vert
_{y=0}\right]  =O(\varepsilon).
\]
We prove now that
\[
\left(  J_{\varepsilon}^{\prime}(W_{\varepsilon,\xi}+\phi_{\varepsilon,\xi
})-J_{\varepsilon}^{\prime}(W_{\varepsilon,\xi})\right)  \left[  \left.
\frac{\partial}{\partial y_{h}}W_{\varepsilon,\xi(y)}\right\vert
_{y=0}\right]  =O(\varepsilon).
\]
We have%
\begin{align*}
& \left(  J_{\varepsilon}^{\prime}(W_{\varepsilon,\xi(y)}+\phi_{\varepsilon
,\xi(y)})-J_{\varepsilon}^{\prime}(W_{\varepsilon,\xi(y)})\right)  \left[
\frac{\partial}{\partial y_{h}}W_{\varepsilon,\xi(y)}\right]  \\
& =\left\langle \phi_{\varepsilon,\xi(y)},\frac{\partial}{\partial y_{h}%
}W_{\varepsilon,\xi(y)}\right\rangle _{\varepsilon}-\frac{1}{\varepsilon^{n}%
}\int b(x)(f(W_{\varepsilon,\xi(y)}+\phi_{\varepsilon,\xi(y)}%
)-f(W_{\varepsilon,\xi(y)}))\frac{\partial}{\partial y_{h}}W_{\varepsilon
,\xi(y)}\\
& =\left\langle \phi_{\varepsilon,\xi(y)}-i_{\varepsilon}^{\ast}%
[b(x)f^{\prime}(W_{\varepsilon,\xi(y)})\phi_{\varepsilon,\xi(y)}%
],\frac{\partial}{\partial y_{h}}W_{\varepsilon,\xi(y)}\right\rangle
_{\varepsilon}\\
& -\frac{1}{\varepsilon^{n}}\int b(x)(f(W_{\varepsilon,\xi(y)}+\phi
_{\varepsilon,\xi(y)})-f(W_{\varepsilon,\xi(y)})-f^{\prime}(W_{\varepsilon
,\xi(y)})\phi_{\varepsilon,\xi(y)})\frac{\partial}{\partial y_{h}%
}W_{\varepsilon,\xi(y)}\\
& =\left\langle \phi_{\varepsilon,\xi(y)}-i_{\varepsilon}^{\ast}%
[b(x)f^{\prime}(W_{\varepsilon,\xi(y)})\phi_{\varepsilon,\xi(y)}%
],\frac{\partial}{\partial y_{h}}W_{\varepsilon,\xi(y)}+\frac{1}{\varepsilon
}Z_{\varepsilon,\xi(y)}^{h}\right\rangle _{\varepsilon}\\
& -\frac{1}{\varepsilon}\left\langle \phi_{\varepsilon,\xi(y)},Z_{\varepsilon
,\xi(y)}^{h}-i_{\varepsilon}^{\ast}[b(x)f^{\prime}(W_{\varepsilon,\xi
(y)})Z_{\varepsilon,\xi(y)}^{h}]\right\rangle _{\varepsilon}\\
& -\frac{1}{\varepsilon^{n}}\int b(x)(f(W_{\varepsilon,\xi(y)}+\phi
_{\varepsilon,\xi(y)})-f(W_{\varepsilon,\xi(y)})-f^{\prime}(W_{\varepsilon
,\xi(y)})\phi_{\varepsilon,\xi(y)})\frac{\partial}{\partial y_{h}%
}W_{\varepsilon,\xi(y)}\\
& :=I_{1}+I_{2}+I_{3}.
\end{align*}
By Lemma \ref{lem:5.2} we have that
\[
\left\vert I_{2}\right\vert \leq\frac{1}{\varepsilon}\Vert Z_{\varepsilon,\xi
}^{h}-i_{\varepsilon}^{\ast}[b(x)f^{\prime}(W_{\varepsilon,\xi(y)}%
)Z_{\varepsilon,\xi}^{h}\Vert_{\varepsilon}\Vert\phi_{\varepsilon,\xi(y)}%
\Vert_{\varepsilon}=O(\varepsilon).
\]
In order to estimate $I_{3}$ we observe that
\begin{equation}
|f^{\prime}(W_{\varepsilon,\xi}+v)-f^{\prime}(W_{\varepsilon,\xi}%
)|\leq\left\{
\begin{array}
[c]{ll}%
CW_{\varepsilon,\xi}^{p-3}|v| & 2<p<3,\\
C(W_{\varepsilon,\xi}^{p-3}|v|+|v|^{p-2}) & p\geq3.
\end{array}
\right.  \label{eq:stimaf}%
\end{equation}
We prove that $I_{3}=O(\varepsilon)$ only for $p\geq3$; the other case is
similar. By mean value theorem, (\ref{eq:stimaf}) and Lemma \ref{lem:6.1} we
have that
\begin{align*}
\left\vert I_{3}\right\vert  &  \leq\frac{C}{\varepsilon^{n}}\int\left(
W_{\varepsilon,\xi(y)}^{p-3}\phi_{\varepsilon,\xi(y)}^{2}+|\phi_{\varepsilon
,\xi(y)}|^{p-1}\right)  \left\vert \frac{\partial}{\partial y_{h}%
}W_{\varepsilon,\xi(y)}\right\vert \\
&  \leq\left(  |W_{\varepsilon,\xi(y)}|_{p,\varepsilon}^{p-3}|\phi
_{\varepsilon,\xi(y)}|_{2,\varepsilon}^{2}+|\phi_{\varepsilon,\xi
(y)}|_{p,\varepsilon}^{p-1}\right)  \left\vert \frac{\partial}{\partial y_{h}%
}W_{\varepsilon,\xi(y)}\right\vert _{p,\varepsilon}\\
&  \leq\left(  \Vert\phi_{\varepsilon,\xi(y)}\Vert_{\varepsilon}^{2}+\Vert
\phi_{\varepsilon,\xi(y)}\Vert_{\varepsilon}^{p-1}\right)  \left\Vert
\frac{\partial}{\partial y_{h}}W_{\varepsilon,\xi(y)}\right\Vert
_{\varepsilon}=O(\varepsilon)
\end{align*}
Finally, for $I_{1}$ we have that
\[
\left\vert I_{1}\right\vert \leq\Vert\phi_{\varepsilon,\xi(y)}-i_{\varepsilon
}^{\ast}[b(x)f^{\prime}(W_{\varepsilon,\xi(y)})\phi_{\varepsilon,\xi(y)}%
]\Vert_{\varepsilon}\left\Vert \frac{\partial}{\partial y_{h}}W_{\varepsilon
,\xi(y)}+\frac{1}{\varepsilon}Z_{\varepsilon,\xi(y)}^{h}\right\Vert
_{\varepsilon}\leq O(\varepsilon),
\]
because
\begin{align*}
\Vert\phi_{\varepsilon,\xi(y)}-i_{\varepsilon}^{\ast}[b(x)f^{\prime
}(W_{\varepsilon,\xi(y)})\phi_{\varepsilon,\xi(y)}]\Vert_{\varepsilon} &
\leq\Vert\phi_{\varepsilon,\xi(y)}\Vert_{\varepsilon}+\Vert i_{\varepsilon
}^{\ast}[b(x)f^{\prime}(W_{\varepsilon,\xi(y)})\phi_{\varepsilon,\xi(y)}%
]\Vert_{\varepsilon}\\
&  \leq\Vert\phi_{\varepsilon,\xi(y)}\Vert_{\varepsilon}+C|f^{\prime
}(W_{\varepsilon,\xi(y)})\phi_{\varepsilon,\xi(y)}|_{p^{\prime},\varepsilon}\\
&  \leq\Vert\phi_{\varepsilon,\xi(y)}\Vert_{\varepsilon}+C|W_{\varepsilon
,\xi(y)}|_{p,\varepsilon}^{p-2}|\phi_{\varepsilon,\xi(y)}|_{p,\varepsilon
}=O(\varepsilon)
\end{align*}
and, by Lemma \ref{lem:6.3},
\[
\left\Vert \frac{\partial}{\partial y_{h}}W_{\varepsilon,\xi(y)}+\frac
{1}{\varepsilon}Z_{\varepsilon,\xi(y)}^{h}\right\Vert _{\varepsilon}\leq
C\varepsilon.
\]
This concludes the proof.
\end{proof}

\begin{proof}
[Proof of Lemma \ref{lem:expJeps}, Step 3.]We prove the claim for $h=1$.
By(\ref{eq:derWeps}) we have
\begin{align*}
\left.  \frac{\partial}{\partial y_{1}}J_{\varepsilon}(W_{\varepsilon,\xi
(y)})\right\vert _{y=0} &  =\left.  J_{\varepsilon}^{\prime}(W_{\varepsilon
,\xi(y)})\left[  \frac{\partial}{\partial y_{1}}W_{\varepsilon,\xi(y)}\right]
\right\vert _{y=0}\\
&  =\left.  \frac{\varepsilon^{2}}{\varepsilon^{n}}\int_{M}c(x)\nabla
_{g}W_{\varepsilon,\xi(y)}\nabla_{g}\frac{\partial}{\partial y_{1}%
}W_{\varepsilon,\xi(y)}d\mu_{g}\right\vert _{y=0}\\
&  +\left.  \frac{1}{\varepsilon^{n}}\int_{M}a(x)W_{\varepsilon,\xi(y)}%
\frac{\partial}{\partial y_{1}}W_{\varepsilon,\xi(y)}d\mu_{g}\right\vert
_{y=0}\\
&  +\left.  \frac{1}{\varepsilon^{n}}\int_{M}b(x)W_{\varepsilon,\xi(y)}%
^{p-1}\frac{\partial}{\partial y_{1}}W_{\varepsilon,\xi(y)}d\mu_{g}\right\vert
_{y=0}\\
&  =:I_{1}+I_{2}+I_{3}.
\end{align*}
To simplify the notation, set $x:=\exp_{\xi_{0}}(\eta)$ and
\[
\tilde{\mathcal{E}}(y,\eta):=\exp_{\xi(y)}^{-1}\exp_{\xi_{0}}(\eta)=\exp
_{\xi(y)}^{-1}(x)=\mathcal{E}(y,x),
\]
so
\[
\tilde{\mathcal{E}}(0,\varepsilon z)=\exp_{\xi_{0}}^{-1}\exp_{\xi_{0}%
}(\varepsilon z)=\varepsilon z.
\]
For the term $I_{1}$we have
\begin{align*}
I_{1} &  =\left.  \frac{\varepsilon^{2}}{\varepsilon^{n}}\int_{M}%
c(x)\nabla_{g}W_{\varepsilon,\xi(y)}\nabla_{g}\frac{\partial}{\partial y_{1}%
}W_{\varepsilon,\xi(y)}d\mu_{g}\right\vert _{y=0}\\
&  =\frac{\varepsilon^{2}}{\varepsilon^{n}}\int_{\mathbb{R}^{n}}c(\exp
_{\xi_{0}}\eta)|g_{\xi_{0}}(\eta)|^{\frac{1}{2}}g_{\xi_{0}}^{ij}(\eta
)\frac{\partial}{\partial\eta_{i}}\left[  \tilde{\gamma}(0)U_{\varepsilon
}(\sqrt{\tilde{A}(0)}\tilde{\mathcal{E}}(0,\eta))\chi(\tilde{\mathcal{E}%
}(0,\eta))\right]  \\
&  \times\left.  \frac{\partial}{\partial\eta_{j}}\frac{\partial}{\partial
y_{1}}\left[  \tilde{\gamma}(y)U_{\varepsilon}(\sqrt{\tilde{A}(y)}%
\tilde{\mathcal{E}}(y,\eta))\chi(\tilde{\mathcal{E}}(y,\eta))\right]
\right\vert _{y=0}d\eta\\
&  =\int_{\mathbb{R}^{n}}\tilde{c}(\varepsilon z)|g_{\xi_{0}}(\varepsilon
z)|^{\frac{1}{2}}g_{\xi_{0}}^{ij}(\varepsilon z)\frac{\partial}{\partial
z_{i}}\left[  \tilde{\gamma}(0)U_{\varepsilon}(\sqrt{\tilde{A}(0)}%
\tilde{\mathcal{E}}(0,\varepsilon z))\chi(\tilde{\mathcal{E}}(0,\varepsilon
z))\right]  \\
&  \times\left.  \frac{\partial}{\partial z_{j}}\frac{\partial}{\partial
y_{1}}\left[  \tilde{\gamma}(y)U_{\varepsilon}(\sqrt{\tilde{A}(y)}%
\tilde{\mathcal{E}}(y,\varepsilon z))\chi(\tilde{\mathcal{E}}(y,\varepsilon
z))\right]  \right\vert _{y=0}dz.
\end{align*}
Now, recalling that $\tilde{\mathcal{E}}(0,\varepsilon z)=\exp_{\xi_{0}}%
^{-1}\exp_{\xi_{0}}(\varepsilon z)=\varepsilon z$, from equations
(\ref{eq:derWeps}), (\ref{eq:espE}), (\ref{eq:espg1}), (\ref{eq:espg2}) and
the exponential decay of $U$ and its derivatives, we get
\begin{align*}
I_{1} &  =\int_{\mathbb{R}^{n}}\tilde{c}(\varepsilon z)\tilde{\gamma
}(0)\left[  \left(  \frac{\partial}{\partial z_{i}}U(\sqrt{\tilde{A}%
(0)}z)\right)  \chi(\varepsilon z)+U(\sqrt{\tilde{A}(0)}z)\frac{\partial
}{\partial z_{i}}\chi(\varepsilon z)\right]  \\
&  \times\left.  \frac{\partial}{\partial z_{i}}\frac{\partial}{\partial
y_{1}}\left[  \tilde{\gamma}(y)U_{\varepsilon}(\sqrt{\tilde{A}(y)}%
\tilde{\mathcal{E}}(y,\varepsilon z))\chi(\tilde{\mathcal{E}}(y,\varepsilon
z))\right]  \right\vert _{y=0}d\eta+O(\varepsilon)\\
&  =\int_{\mathbb{R}^{n}}\tilde{c}(\varepsilon z)\tilde{\gamma}(0)\frac
{\partial}{\partial z_{i}}\left(  U(\sqrt{\tilde{A}(0)}z)\right)  \{\left.
\frac{\partial}{\partial y_{1}}\tilde{\gamma}(y)\right\vert _{y=0}%
\frac{\partial}{\partial z_{i}}U(\sqrt{\tilde{A}(0)}z)\\
&  +\tilde{\gamma}(0)\frac{\partial}{\partial z_{i}}U(\sqrt{\tilde{A}%
(0)}z)\left.  \frac{\partial}{\partial y_{1}}\chi(\tilde{\mathcal{E}%
}(y,\varepsilon z))\right\vert _{y=0}\\
&  +\tilde{\gamma}(0)\left.  \frac{\partial}{\partial y_{1}}\frac{\partial
}{\partial z_{i}}U_{\varepsilon}(\sqrt{\tilde{A}(y)}\tilde{\mathcal{E}%
}(y,\varepsilon z))\right\vert _{y=0}\}dz+O(\varepsilon)=:D_{1}+D_{2}%
+D_{3}+O(\varepsilon).
\end{align*}
Expanding
\[
\tilde{c}(\varepsilon z)=\tilde{c}(0)+\varepsilon\frac{\partial\tilde{c}%
}{\partial z_{k}}(0)z_{k}+O(\varepsilon^{2}|z|)
\]
we get
\[
D_{1}=\frac{1}{2}\left.  \frac{\partial}{\partial y_{1}}(\tilde{\gamma
}(y))^{2}\right\vert _{y=0}\tilde{c}(0)\int_{\mathbb{R}^{n}}|\nabla_{z}\left(
U(\sqrt{\tilde{A}(0)}z)\right)  |^{2}dz+O(\varepsilon)
\]
and, by (\ref{eq:espE}),
\begin{align*}
D_{2} &  =\tilde{\gamma}^{2}(0)\int_{\mathbb{R}^{n}}\tilde{c}(\varepsilon
z)|\nabla_{z}U(\sqrt{\tilde{A}(0)}z)|^{2}\chi^{\prime}(\varepsilon
|z|)\frac{z_{k}}{|z|}\left.  \frac{\partial}{\partial y_{1}}\tilde
{\mathcal{E}}_{k}(y,\varepsilon z))\right\vert _{y=0}dz=\\
&  =-\tilde{\gamma}^{2}(0)\tilde{c}(0)\int_{\mathbb{R}^{n}}|\nabla_{z}%
U(\sqrt{\tilde{A}(0)}z)|^{2}\chi^{\prime}(\varepsilon|z|)\frac{z_{1}}%
{|z|}dz+O(\varepsilon)=O(\varepsilon),
\end{align*}
since the last integral is zero by symmetry reasons. At this point we observe
that
\begin{align*}
\frac{\partial U}{\partial z_{1}}(\sqrt{\tilde{A}(0)}z) &  =U^{\prime}%
(\sqrt{\tilde{A}(0)}z)\frac{z_{1}}{|z|},\\
\frac{\partial}{\partial z_{1}}\left(  \frac{U^{\prime}(\sqrt{\tilde{A}(0)}%
z)}{|z|}\right)   &  =\left(  \frac{\sqrt{\tilde{A}(0)}U^{\prime\prime}%
(z)}{|z|^{2}}-\frac{U^{\prime}(z)}{|z|^{3}}\right)  z_{1},
\end{align*}
where, abusing notation, we write
\[
\frac{\partial U}{\partial z_{1}}(\sqrt{\tilde{A}(0)}z)=\left.  \frac
{\partial}{\partial\eta_{1}}U(\eta)\right\vert _{\eta=\sqrt{\tilde{A}(0)}z}.
\]
In the same spirit, in the following we write
\[
\frac{\partial^{2}U}{\partial z_{i}\partial z_{j}}(\sqrt{\tilde{A}%
(0)}z)=\left.  \frac{\partial^{2}}{\partial\eta_{i}\partial\eta_{j}}%
U(\eta)\right\vert _{\eta=\sqrt{\tilde{A}(0)}z}.
\]
We have%
\begin{align*}
& \frac{D_{3}}{\tilde{\gamma}^{2}(0)}=\int_{\mathbb{R}^{n}}\tilde
{c}(\varepsilon z)\frac{\partial}{\partial z_{i}}\left(  U(\sqrt{\tilde{A}%
(0)}z)\right)  \left.  \frac{\partial}{\partial z_{i}}\frac{\partial}{\partial
y_{1}}U_{\varepsilon}(\sqrt{\tilde{A}(y)}\tilde{\mathcal{E}}(y,\varepsilon
z))\right\vert _{y=0}dz\\
& =\int_{\mathbb{R}^{n}}\tilde{c}(\varepsilon z)\frac{\partial}{\partial
z_{i}}\left(  U(\sqrt{\tilde{A}(0)}z)\right)  \frac{\partial}{\partial z_{i}%
}\left\{  \frac{\partial U}{\partial z_{k}}(\sqrt{\tilde{A}(0)}z)\left(
\left.  \frac{\partial\sqrt{\tilde{A}(y)}}{\partial y_{1}}\right\vert
_{y=0}z_{k}\left.  +\frac{\sqrt{\tilde{A}(0)}}{\varepsilon}\frac{\partial
}{\partial y_{1}}\tilde{\mathcal{E}}(y,\varepsilon z)\right\vert
_{y=0}\right)  \right\}  dz\\
& =\int_{\mathbb{R}^{n}}\tilde{c}(\varepsilon z)\left.  \frac{\partial
\sqrt{\tilde{A}(y)}}{\partial y_{1}}\right\vert _{y=0}\frac{\partial}{\partial
z_{i}}\left(  U(\sqrt{\tilde{A}(0)}z)\right)  \frac{\partial}{\partial z_{i}%
}\left(  \frac{\partial U}{\partial z_{k}}(\sqrt{\tilde{A}(0)}z)z_{k}\right)
dz\\
& -\frac{1}{\varepsilon}\int_{\mathbb{R}^{n}}\tilde{c}(\varepsilon z)\tilde
{A}(0)\frac{\partial U}{\partial z_{i}}(\sqrt{\tilde{A}(0)}z)\frac{\partial
}{\partial z_{i}}\left(  \frac{\partial U}{\partial z_{k}}(\sqrt{\tilde{A}%
(0)}z)\delta_{1k}\right)  dz+O(\varepsilon)\\
& =\int_{\mathbb{R}^{n}}\tilde{c}(\varepsilon z)\left.  \frac{\partial
\sqrt{\tilde{A}(y)}}{\partial y_{1}}\right\vert _{y=0}\frac{\partial}{\partial
z_{i}}\left(  U(\sqrt{\tilde{A}(0)}z)\right)  \frac{\partial U}{\partial
z_{i}}(\sqrt{\tilde{A}(0)}z)dz\\
& +\int_{\mathbb{R}^{n}}\tilde{c}(\varepsilon z)\left.  \frac{\partial
\sqrt{\tilde{A}(y)}}{\partial y_{1}}\right\vert _{y=0}\sqrt{\tilde{A}(0)}%
\frac{\partial}{\partial z_{i}}\left(  U(\sqrt{\tilde{A}(0)}z)\right)
\frac{\partial^{2}U}{\partial z_{i}z_{k}}(\sqrt{\tilde{A}(0)}z)z_{k}dz\\
& -\frac{1}{\varepsilon}\int_{\mathbb{R}^{n}}\tilde{c}(\varepsilon z)\tilde
{A}(0)\frac{\partial U}{\partial z_{i}}(\sqrt{\tilde{A}(0)}z)\frac{\partial
}{\partial z_{i}}\left(  \frac{\partial U}{\partial z_{1}}(\sqrt{\tilde{A}%
(0)}z)\right)  dz+O(\varepsilon)\\
& =\frac{1}{2}\int_{\mathbb{R}^{n}}\tilde{c}(0)\left.  \frac{\partial
}{\partial y_{1}}\left\vert \nabla_{z}U(\sqrt{\tilde{A}(y)}z)\right\vert
^{2}\right\vert _{y=0}dz\\
& -\frac{1}{\varepsilon}\int_{\mathbb{R}^{n}}\tilde{c}(\varepsilon z)\tilde
{A}(0)\frac{\partial U}{\partial z_{i}}(\sqrt{\tilde{A}(0)}z)\frac{\partial
}{\partial z_{i}}\left(  \frac{\partial U}{\partial z_{1}}(\sqrt{\tilde{A}%
(0)}z)\right)  dz+O(\varepsilon).
\end{align*}
Moreover, expanding $\tilde{c}$ around $z=0,$%
\begin{align*}
& -\frac{1}{\varepsilon}\int_{\mathbb{R}^{n}}\tilde{c}(\varepsilon z)\tilde
{A}(0)\frac{\partial U}{\partial z_{i}}(\sqrt{\tilde{A}(0)}z)\frac{\partial
}{\partial z_{i}}\left(  \frac{\partial U}{\partial z_{1}}(\sqrt{\tilde{A}%
(0)}z)\right)  dz\\
& =-\frac{1}{\varepsilon}\int_{\mathbb{R}^{n}}\tilde{c}(0)\tilde{A}%
(0)\frac{\partial U}{\partial z_{i}}(\sqrt{\tilde{A}(0)}z)\frac{\partial
}{\partial z_{i}}\left(  \frac{\partial U}{\partial z_{1}}(\sqrt{\tilde{A}%
(0)}z)\right)  dz\\
& -\int_{\mathbb{R}^{n}}\frac{\partial\tilde{c}}{\partial z_{k}}(0)z_{k}%
\tilde{A}(0)\frac{\partial U}{\partial z_{i}}(\sqrt{\tilde{A}(0)}%
z)\frac{\partial}{\partial z_{i}}\left(  \frac{\partial U}{\partial z_{1}%
}(\sqrt{\tilde{A}(0)}z)\right)  dz+O(\varepsilon)\\
& =-\frac{1}{\varepsilon}\tilde{c}(0)\tilde{A}(0)\int_{\mathbb{R}^{n}%
}U^{\prime}(\sqrt{\tilde{A}(0)}z)\frac{z_{i}}{|z|}\frac{\partial}{\partial
z_{i}}\left(  \frac{U^{\prime}(\sqrt{\tilde{A}(0)}z)}{|z|}z_{1}\right)  \\
& -\int_{\mathbb{R}^{n}}\frac{\partial\tilde{c}}{\partial z_{k}}(0)z_{k}%
\tilde{A}(0)U^{\prime}(\sqrt{\tilde{A}(0)}z)\frac{z_{i}}{|z|}\frac{\partial
}{\partial z_{i}}\left(  \frac{U^{\prime}(\sqrt{\tilde{A}(0)}z)}{|z|}%
z_{1}\right)  +O(\varepsilon)\\
& =-\int_{\mathbb{R}^{n}}\frac{\partial\tilde{c}}{\partial z_{k}}(0)\tilde
{A}(0)\left(  \frac{U^{\prime}(\sqrt{\tilde{A}(0)}z)}{|z|}\right)  ^{2}%
z_{k}z_{1}dz\\
& -\int_{\mathbb{R}^{n}}\frac{\partial\tilde{c}}{\partial z_{k}}(0)\tilde
{A}(0)\frac{U^{\prime}(\sqrt{\tilde{A}(0)}z)}{|z|}\left(  \frac{\sqrt
{\tilde{A}(0)}U^{\prime\prime}(z)}{|z|^{2}}-\frac{U^{\prime}(z)}{|z|^{3}%
}\right)  |z|^{2}z_{k}z_{1}dz+O(\varepsilon)\\
& =-\int_{\mathbb{R}^{n}}\frac{\partial\tilde{c}}{\partial z_{1}}(0)\tilde
{A}(0)\left(  \frac{U^{\prime}(\sqrt{\tilde{A}(0)}z)}{|z|}\right)  ^{2}%
z_{1}^{2}dz\\
& -\int_{\mathbb{R}^{n}}\frac{\partial\tilde{c}}{\partial z_{1}}(0)\tilde
{A}(0)\frac{U^{\prime}(\sqrt{\tilde{A}(0)}z)}{|z|}\left(  \frac{\sqrt
{\tilde{A}(0)}U^{\prime\prime}(z)}{|z|^{2}}-\frac{U^{\prime}(z)}{|z|^{3}%
}\right)  |z|^{2}z_{1}^{2}dz+O(\varepsilon)\\
& =-\frac{1}{2}\frac{\partial\tilde{c}}{\partial z_{1}}(0)\int_{\mathbb{R}%
^{n}}\frac{\partial}{\partial z_{1}}\left(  \left\vert \nabla_{z}%
U(\sqrt{\tilde{A}(y)}z)\right\vert ^{2}\right)  z_{1}dz+O(\varepsilon)\\
& =\frac{1}{2}\int_{\mathbb{R}^{n}}\left.  \frac{\partial\tilde{c}}{\partial
y_{1}}(y)\right\vert _{y=0}\left\vert \nabla_{z}U(\sqrt{\tilde{A}%
(y)}z)\right\vert ^{2}dz+O(\varepsilon),
\end{align*}
since the other integrals are zero by symmetry reasons. In conclusion, we
have
\[
I_{1}=\left.  \frac{\partial}{\partial y_{1}}\left(  \frac{1}{2}%
\int_{\mathbb{R}^{n}}c(\xi(y))\left\vert \nabla_{z}V^{\xi(y)}(z)\right\vert
^{2}dz\right)  \right\vert _{y=0}+O(\varepsilon)
\]
For the second term, by (\ref{eq:espg1}) and (\ref{eq:espg2}) we have, in an
analogous way,
\begin{align}
I_{2} &  =\left.  \frac{1}{\varepsilon^{n}}\int_{M}a(x)W_{\varepsilon,\xi
(y)}\frac{\partial}{\partial y_{1}}W_{\varepsilon,\xi(y)}d\mu_{g}\right\vert
_{y=0}\label{eq:I2}\\
&  =\frac{1}{\varepsilon^{n}}\int_{\mathbb{R}^{n}}|g_{\xi_{0}}(\eta
)|^{\frac{1}{2}}a(\exp_{\xi_{0}}\eta)\tilde{\gamma}(0)U_{\varepsilon}%
(\sqrt{\tilde{A}(0)}\tilde{\mathcal{E}}(0,\eta))\chi(\tilde{\mathcal{E}%
}(0,\eta))\nonumber\\
&  \times\left.  \frac{\partial}{\partial y_{1}}\left[  \tilde{\gamma
}(y)U_{\varepsilon}(\sqrt{\tilde{A}(y)}\tilde{\mathcal{E}}(y,\eta))\chi
(\tilde{\mathcal{E}}(y,\eta))\right]  \right\vert _{y=0}d\eta\nonumber\\
&  =\int_{\mathbb{R}^{n}}|g_{\xi_{0}}(\varepsilon z)|^{\frac{1}{2}}\tilde
{a}(\varepsilon z)\tilde{\gamma}(0)U_{\varepsilon}(\sqrt{\tilde{A}(0)}%
\tilde{\mathcal{E}}(0,\varepsilon z))\chi(\tilde{\mathcal{E}}(0,\varepsilon
z))\nonumber\\
&  \times\left.  \frac{\partial}{\partial y_{1}}\left[  \tilde{\gamma
}(y)U_{\varepsilon}(\sqrt{\tilde{A}(y)}\tilde{\mathcal{E}}(y,\varepsilon
z))\chi(\tilde{\mathcal{E}}(y,\varepsilon z))\right]  \right\vert
_{y=0}dz\nonumber\\
&  =\int_{\mathbb{R}^{n}}\tilde{a}(\varepsilon z)\tilde{\gamma}(0)U(\sqrt
{\tilde{A}(0)}z)\nonumber\\
&  \times\left.  \frac{\partial}{\partial y_{1}}\left[  \tilde{\gamma
}(y)U_{\varepsilon}(\sqrt{\tilde{A}(y)}\tilde{\mathcal{E}}(y,\varepsilon
z))\chi(\tilde{\mathcal{E}}(y,\varepsilon z))\right]  \right\vert
_{y=0}dz+O(\varepsilon^{2})=\nonumber\\
&  =\int_{\mathbb{R}^{n}}\tilde{a}(\varepsilon z)\tilde{\gamma}(0)U^{2}%
(\sqrt{\tilde{A}(0)}z)\left.  \frac{\partial}{\partial y_{1}}\tilde{\gamma
}(y)\right\vert _{y=0}dz\nonumber\\
&  +\int_{\mathbb{R}^{n}}\tilde{a}(\varepsilon z)\tilde{\gamma}^{2}%
(0)U(\sqrt{\tilde{A}(0)}z)\left.  \frac{\partial}{\partial y_{1}}\left[
U_{\varepsilon}(\sqrt{\tilde{A}(y)}\tilde{\mathcal{E}}(y,\varepsilon
z))\right]  \right\vert _{y=0}dz+O(\varepsilon^{2}),\nonumber
\end{align}
and for the last term we have%
\begin{align}
& \int_{\mathbb{R}^{n}}\tilde{a}(\varepsilon z)\tilde{\gamma}^{2}%
(0)U(\sqrt{\tilde{A}(0)}z)\left.  \frac{\partial}{\partial y_{1}}\left[
U_{\varepsilon}(\sqrt{\tilde{A}(y)}\tilde{\mathcal{E}}(y,\varepsilon
z))\right]  \right\vert _{y=0}dz\label{eq:I2bis}\\
& =\tilde{\gamma}^{2}(0)\left.  \frac{\partial}{\partial y_{1}}\sqrt{\tilde
{A}(y)}\right\vert _{y=0}\int_{\mathbb{R}^{n}}\tilde{a}(\varepsilon
z)U(\sqrt{\tilde{A}(0)}z)\frac{\partial U}{\partial z_{k}}(\sqrt{\tilde{A}%
(0)}z)z_{k}dz\nonumber\\
& +\frac{\tilde{\gamma}^{2}(0)\sqrt{\tilde{A}(0)}}{\varepsilon}\int
_{\mathbb{R}^{n}}\tilde{a}(\varepsilon z)U(\sqrt{\tilde{A}(0)},z)\frac
{\partial U}{\partial z_{k}}(\sqrt{\tilde{A}(0)}z)\left.  \frac{\partial
}{\partial y_{1}}\tilde{\mathcal{E}_{k}}(y,\varepsilon z)\right\vert
_{y=0}dz\nonumber\\
& =\tilde{\gamma}^{2}(0)\tilde{a}(0)\left.  \frac{\partial}{\partial y_{1}%
}\sqrt{\tilde{A}(y)}\right\vert _{y=0}\int_{\mathbb{R}^{n}}U(\sqrt{\tilde
{A}(0)}z)\frac{\partial U}{\partial z_{k}}(\sqrt{\tilde{A}(0)}z)z_{k}%
dz\nonumber\\
& -\frac{\tilde{\gamma}^{2}(0)\sqrt{\tilde{A}(0)}}{\varepsilon}\int
_{\mathbb{R}^{n}}\tilde{a}(\varepsilon z)U(\sqrt{\tilde{A}(0)},z)\frac
{\partial U}{\partial z_{1}}(\sqrt{\tilde{A}(0)}z)dz+O(\varepsilon).\nonumber
\end{align}
At this point we observe that $\frac{\partial U}{\partial z_{1}}(\sqrt
{\tilde{A}(0)}z)=U^{\prime}(\sqrt{\tilde{A}(0)}z)\frac{z_{1}}{|z|}$ and,
expanding
\[
\tilde{a}(\varepsilon z)=\tilde{a}(0)+\varepsilon\frac{\partial\tilde{a}%
}{\partial z_{k}}(0)z_{k}+O(\varepsilon^{2}|z|),
\]
we obtain%
\begin{align}
& \frac{1}{\varepsilon}\int_{\mathbb{R}^{n}}\tilde{a}(\varepsilon
z)U(\sqrt{\tilde{A}(0)},z)U^{\prime}(\sqrt{\tilde{A}(0)}z)\frac{z_{1}}%
{|z|}dz\label{eq:I2ter}\\
& =\frac{1}{\varepsilon}\int_{\mathbb{R}^{n}}\tilde{a}(0)U(\sqrt{\tilde{A}%
(0)},z)U^{\prime}(\sqrt{\tilde{A}(0)}z)\frac{z_{1}}{|z|}dz\nonumber\\
& +\int_{\mathbb{R}^{n}}\frac{\partial\tilde{a}}{\partial z_{k}}%
(0)U(\sqrt{\tilde{A}(0)},z)U^{\prime}(\sqrt{\tilde{A}(0)}z)\frac{z_{1}z_{k}%
}{|z|}dz+O(\varepsilon)\nonumber\\
& =\int_{\mathbb{R}^{n}}\frac{\partial\tilde{a}}{\partial z_{1}}%
(0)U(\sqrt{\tilde{A}(0)},z)U^{\prime}(\sqrt{\tilde{A}(0)}z)\frac{z_{1}^{2}%
}{|z|}dz+O(\varepsilon).\nonumber
\end{align}
In conclusion, from (\ref{eq:I2}), (\ref{eq:I2bis}) and (\ref{eq:I2ter}) we
get
\begin{align*}
I_{2} &  =\tilde{a}(0)\tilde{\gamma}(0)\left.  \frac{\partial}{\partial y_{1}%
}\tilde{\gamma}(y)\right\vert _{y=0}\int_{\mathbb{R}^{n}}U^{2}(\sqrt{\tilde
{A}(0)}z)dz\\
&  +\tilde{\gamma}^{2}(0)\tilde{a}(0)\left.  \frac{\partial}{\partial y_{1}%
}\sqrt{\tilde{A}(y)}\right\vert _{y=0}\sum_{k}\int_{\mathbb{R}^{n}}%
U(\sqrt{\tilde{A}(0)}z)U^{\prime}(\sqrt{\tilde{A}(0)}z)\frac{z_{k}^{2}}%
{|z|}dz\\
&  -\tilde{\gamma}^{2}(0)\sqrt{\tilde{A}(0)}\frac{\partial\tilde{a}}{\partial
z_{1}}(0)\int_{\mathbb{R}^{n}}U(\sqrt{\tilde{A}(0)},z)U^{\prime}(\sqrt
{\tilde{A}(0)}z)\frac{z_{1}^{2}}{|z|}dz+O(\varepsilon)\\
&  =\left.  \frac{\partial}{\partial y_{1}}\left(  \frac{1}{2}\int
_{\mathbb{R}^{n}}a(\xi(y))\left(  V^{\xi(y)}\right)  ^{2}dz\right)
\right\vert _{y=0}+O(\varepsilon)
\end{align*}
We argue in a similar way for $I_{3}$ to complete the proof.
\end{proof}

\bigskip

\end{document}